\documentclass[12pt,reqno]{amsart}

\usepackage[margin=1.18in]{geometry}
\usepackage{amsmath,amssymb,mathtools,mathrsfs,bm}
\usepackage{booktabs,array,enumitem}
\usepackage{aliascnt}
\usepackage{xcolor}
\usepackage{comment}
\usepackage{microtype}
\usepackage{hyperref}
\usepackage{xurl}
\usepackage[capitalise,nameinlink]{cleveref}
\usepackage{embedfile}
\usepackage[backend=biber,style=alphabetic,sorting=nyt,doi=true,isbn=false,url=true,eprint=true,maxbibnames=99]{biblatex}
\usepackage{stmaryrd}
\renewbibmacro{in:}{}

\hypersetup{
  colorlinks=true,
  linkcolor=blue!55!black,
  citecolor=green!40!black,
  urlcolor=blue!60!black,
  pdftitle={Sharp partial regularity for Hamiltonian stationary Lagrangian graphs},
  pdfauthor={Arunima Bhattacharya, Gerard Orriols, Anna Skorobogatova}
}

\AtEveryBibitem{%
  \ifentrytype{article}{\clearfield{url}\clearfield{urldate}}{}
  \ifentrytype{book}{\clearfield{url}\clearfield{urldate}}{}
}

\newcommand{\R}{\mathbb R}
\newcommand{\C}{\mathbb C}

\newcommand{\Z}{\mathbb Z}
\newcommand{\cH}{\mathcal H}
\newcommand{\cQ}{\mathcal Q}
\newcommand{\eps}{\varepsilon}
\newcommand{\dd}{\mathrm d}
\newcommand{\vol}{\mathrm{vol}}
\newcommand{\Id}{\mathrm{Id}}
\newcommand{\Sing}{\operatorname{Sing}}
\newcommand{\Reg}{\operatorname{Reg}}
\newcommand{\graph}{\operatorname{graph}}
\newcommand{\Mass}{\mathbf M}
\newcommand{\FSL}{\mathcal F}
\newcommand{\Sym}{\operatorname{Sym}}

\newcommand{\spt}{\operatorname{spt}}

\newcommand{\Rea}{\operatorname{Re}}
\newcommand{\Ima}{\operatorname{Im}}
\newcommand{\tr}{\operatorname{tr}}
\newcommand{\Span}{\operatorname{span}}
\newcommand{\Lip}{\operatorname{Lip}}

\newcommand{\slot}{\mathbin{\lrcorner}}

\newtheorem{theorem}{Theorem}[section]
\newaliascnt{question}{theorem}
\newtheorem{question}[question]{Question}
\aliascntresetthe{question}
\newaliascnt{proposition}{theorem}
\newtheorem{proposition}[proposition]{Proposition}
\aliascntresetthe{proposition}
\newaliascnt{lemma}{theorem}
\newtheorem{lemma}[lemma]{Lemma}
\aliascntresetthe{lemma}
\newaliascnt{corollary}{theorem}
\newtheorem{corollary}[corollary]{Corollary}
\aliascntresetthe{corollary}
\newaliascnt{definition}{theorem}

\aliascntresetthe{definition}

\newaliascnt{remark}{theorem}
\theoremstyle{remark}
\newtheorem{remark}[remark]{Remark}
\aliascntresetthe{remark}
\theoremstyle{plain}

\crefname{theorem}{Theorem}{Theorems}
\crefname{question}{Question}{Questions}
\crefname{proposition}{Proposition}{Propositions}
\crefname{lemma}{Lemma}{Lemmas}
\crefname{corollary}{Corollary}{Corollaries}
\crefname{definition}{Definition}{Definitions}
\crefname{remark}{Remark}{Remarks}

\newcommand{\ttimes}{%
  \mathbin{\vcenter{\hbox{%
    \ooalign{%
      \hfil\raise.18ex\hbox{$\times$}\hfil\cr
      \hfil\lower.18ex\hbox{$\times$}\hfil\cr
    }%
  }}}%
}

\newcommand{\Fcal}{\mathcal{F}}
\newcommand{\Gcal}{\mathcal{G}}
\newcommand{\Hcal}{\mathcal{H}}

\newcommand{\Kcal}{\mathcal{K}}
\newcommand{\Lcal}{\mathcal{L}}

\newcommand{\Ucal}{\mathcal{U}}

\newcommand{\Oscr}{\mathscr{O}}

\newcommand{\Bbf}{\mathbf{B}}
\newcommand{\Cbf}{\mathbf{C}}

\newcommand{\Sbb}{\mathbb{S}}

\newcommand{\Zbb}{\mathbb{Z}}

\allowdisplaybreaks

\title[Sharp partial regularity]{Sharp partial regularity of Hamiltonian stationary and special Lagrangian graphs}

\author{Arunima Bhattacharya}
\address{Department of Mathematics, University of North Carolina at Chapel Hill}
\email{arunimab@unc.edu}

\author{Gerard Orriols}
\address{Department of Mathematics, University of Cambridge}
\email{go262@cam.ac.uk}

\author{Anna Skorobogatova}
\address{Institute for Theoretical Studies, ETH Z\"urich}
\email{anna.skorobogatova@eth-its.ethz.ch}

\begin{document}

\begin{abstract}
We prove a sharp partial regularity result for Hamiltonian stationary Lagrangian Lipschitz submanifolds in arbitrary smooth almost K\"ahler manifolds: every weak solution of the corresponding equation is smooth away from a relatively closed singular set of Hausdorff dimension at most $n-5$.  We show that the estimate is optimal by constructing a nonzero two-homogeneous viscosity solution
\[
  U\in C^{1,1}(\R^5)\setminus C^2(\R^5)
\]
of the phase-zero special Lagrangian equation, whose level sets on $\Sbb^4$ are the leaves of Cartan's isoparametric foliation.
Its gradient graph is a non-flat calibrated cone, real analytic away from the vertex. This also gives the first $C^{1,1}$ but non-$C^2$ solution of the special Lagrangian equation, and shows that the same dimensional estimate is sharp in the case of special Lagrangian graphs.
\end{abstract}

\maketitle
\tableofcontents

\section{Introduction}
Special Lagrangian submanifolds, introduced by Harvey--Lawson \cite{HL82}, are volume-minimizing submanifolds of a Calabi--Yau manifold and are central to the geometry underlying mirror symmetry. A converse of this fact is that minimal Lagrangian submanifolds are automatically special Lagrangian, and this leads naturally to the approach of constructing special Lagrangians by minimizing the area among Lagrangians. Submanifolds satisfying the corresponding first variation condition, known as Hamiltonian stationary Lagrangian submanifolds, were introduced in this variational form by Oh \cite{Oh1993}; they arise in questions of existence and stability in a fixed Hamiltonian isotopy class and have since been studied from various viewpoints.

The regularity theory of Hamiltonian stationary submanifolds has been studied by many over the past thirty years, yet it remains far less developed than the corresponding theory for minimal surfaces. Already in dimension two and in the minimizing case, non-flat conical singularities appear, as shown by Schoen--Wolfson \cite{SchoenWolfson2001}, and in higher dimensions no monotonicity formula is known, and therefore a tangent cone analysis is not available (see \cite{Orriols2026}). Thus, a refined understanding of singularities seems completely out of reach in general, and much of the recent progress has been restricted to the graphical case \cite{ChenWarren2019,BCW2023, BhattacharyaOgden2025}.

In this paper, we study the regularity of Hamiltonian stationary Lagrangian submanifolds
in almost K\"ahler manifolds which can be locally written as Lipschitz graphs. We show that every weak solution to the corresponding fourth order equation is smooth away from a relatively closed set of Hausdorff dimension at most $n-5$. 
We then show that this estimate is sharp, by constructing a five-dimensional non-flat graphical special Lagrangian cone in $\C^5$.
This therefore settles the question of the optimal dimensional estimate of the singular set for the fully nonlinear second-order special Lagrangian equation among $C^{1,1}$ functions.
\subsection{Main results}
Let $L^n$ be a Lagrangian submanifold in $\C^n = \R^n \times \R^n$ which is a Lipschitz graph over a simply connected domain $\mathcal{U} \subset \R^n$. It is well known that $L$ is then a gradient graph:
\[
  L = L_u = \{(x,Du(x)):x\in\Ucal\}
  \quad
  \text{for a function } u \in C^{1,1}(\Ucal).
\]
Recall that the Lagrangian angle $\beta$ of $L$ is defined by
\[
  \Omega|_{TL_u}=e^{i\beta}\vol_{L_u},
  \qquad
  \Omega=\dd z_1\wedge\cdots\wedge\dd z_n,
\]
and is given by
\begin{equation}\label{eq:Lag-angle}
  \beta=F_n(D^2u),
\end{equation}
where $F_n$ is the special Lagrangian operator
\begin{equation*}\label{eq:SL-operator}
  F_n(A):=\sum_{j=1}^n\arctan\lambda_j(A),
  \qquad A \in \Sym(n).
\end{equation*}
Here $\{ \lambda_j(A) \}$ are the eigenvalues of $A$ and the arctangent takes values in $(-\pi/2,\pi/2)$.

The mean-curvature identity of Harvey--Lawson gives $H=J \nabla_g\beta$. Thus a prescribed angle or phase $\theta_0:\Ucal\to(-n\pi/2,n\pi/2)$ leads to the second-order Lagrangian mean curvature equation
\begin{equation}\label{eq:sLag}
  F_n(D^2u)=\theta_0\,.
\end{equation}
When $\theta_0$ is constant, this is the special Lagrangian equation. The Hamiltonian stationary equation is instead the fourth-order equation
\begin{equation}\label{eq:HSL}
  \Delta_g\beta=0\,,
\end{equation}
with $\beta$ as in \eqref{eq:Lag-angle} and $ g_{ij}=\delta_{ij}+\partial_{ik}u\,\delta^{kl}\,\partial_{lj}u $ the induced metric.

It turns out that essentially the same framework can be set up locally for graphs in an arbitrary almost K\"ahler manifold $(M^{2n}, \omega, J, g)$. We will call $L \subset M$ a \emph{Lagrangian Lipschitz submanifold} if $L$ can be written, in local Darboux charts $\Psi : B_r \subset \C^n \to M$, as the graph of the gradient of a $C^{1,1}$ function:
\begin{equation}\label{eq:DW-graph}
  \Phi_u(x) := \Psi(x,Du(x))\,,
  \qquad
  L_u := \Phi_u(B_r)\,,
  \qquad
  L \cap \Psi(B_r) = L_u \cap \Psi(B_r)\,.
\end{equation}
Given such an $L_u$, one can locally assign to it a generalized Lagrangian angle $\beta(x)$, given by a fully nonlinear operator $\FSL(x, Du, D^2 u)$, and then the Hamiltonian stationarity condition is equivalent to \eqref{eq:HSL} with an additional divergence term for the pullback metric; see \cref{sec:lag-angle} for details.

In either case, we are led to consider $C^{1,1}_{\mathrm{loc}}$ weak solutions of the Hamiltonian-stationary system in the following sense: we assume
\begin{equation}
\label{eq:main-assumption}
  u\in C^{1,1}_{\mathrm{loc}}(\Ucal),
  \qquad
  \beta := \FSL(x, Du, D^2u) \in W^{1,2}_{\mathrm{loc}}(\Ucal),
\end{equation}
with $\FSL$ as in \eqref{eq:variable-phase-equation}, and
\begin{equation}
\label{eq:main-weak}
  \int_{\Ucal}\sqrt{\det g}\,g^{ij} (\partial_i \beta - b_i) \partial_j\varphi \,\dd x=0
  \qquad\text{for every }\varphi\in C_c^\infty(\Ucal).
\end{equation}
where $b$ is an $L^\infty$ vector field on $\mathcal{U}$ determined by the geometry ($b=0$ in the Calabi--Yau case).

Our first result is the following partial regularity theorem. In contrast to previous higher-dimensional results \cite{ChenWarren2019,BCW2023,BhattacharyaOgden2025}, it imposes no restriction on the phase range and requires neither convexity of the potential nor a smallness assumption on the Hessian.

\begin{theorem}\label{thm:regularity-main}
Suppose that $u\in C^{1,1}_{\mathrm{loc}}(\Ucal)$ satisfies \eqref{eq:main-assumption} and \eqref{eq:main-weak}, where $\Ucal\subset\R^n$ is open. Then there is a relatively closed set $\Sigma_u\subset\Ucal$ such that
\[
  u\in C^\infty(\Ucal\setminus\Sigma_u)
  \qquad \text{and} \qquad
  \dim_{\cH}\Sigma_u\le n-5.
\]
Moreover, for $n\le4$ the set $\Sigma_u$ is empty, and for $n=5$ it is locally finite.
\end{theorem}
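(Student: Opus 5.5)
We argue by an $\varepsilon$-regularity theorem combined with a blow-up and dimension-reduction scheme. The sharp threshold $n-5$ then comes from a Liouville-type classification of homogeneous blow-up limits in low dimensions.

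\textbf{Step 1 ($\varepsilon$-regularity).} We show there is $\varepsilon_0>0$, depending on $n$, $\|D^2u\|_{L^\infty}$ and the ambient geometry, with the following property. If on a ball $B_r(x_0)$ the Hessian is $L^2$-close to a constant symmetric matrix $A$,
\[
  r^{-n}\int_{B_r(x_0)}|D^2u-A|^2\,\dd x<\varepsilon_0 ,
\]
and $r$ is small, then $u$ is smooth near $x_0$. The argument uses the $W^{1,2}$ regularity of $\beta$ together with the weak equation \eqref{eq:main-weak}.

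When $D^2u$ is close to $A$, the operator $\FSL$ linearizes to a uniformly elliptic operator with coefficients $g_A^{ij}$, independent of the phase range and of any convexity assumption. The weak equation gives a Caccioppoli inequality for $\beta$. This yields Campanato decay for $\beta$, hence Hölder continuity of $\beta$. Then $C^{2,\alpha}$ estimates for the second-order equation $\FSL(x,Du,D^2u)=\beta$ in the small-oscillation regime (Savin-type or perturbative Evans--Krylov) give $u\in C^{2,\alpha}$. Bootstrapping the coupled system, with the fourth-order equation now having Hölder coefficients, gives $C^\infty$.

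The closeness must be iterated, so we prove an excess-decay lemma by compactness. After rescaling, solutions with small excess converge to solutions of the linearized constant-coefficient fourth-order equation $\Delta_{g_A}(g_A^{ij}\partial_{ij}w)=0$, whose solutions are smooth. Regular points are then exactly the points where $D^2u$ is VMO-close to a constant, so $\Sigma_u$ is relatively closed.

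\textbf{Step 2 (blow-ups and homogeneity).} At $x_0\in\Sigma_u$ we rescale $u_r(x)=r^{-2}(u(x_0+rx)-u(x_0)-rDu(x_0)\cdot x)$. These have uniformly bounded $C^{1,1}$ norm, so subsequences converge in $C^{1,\alpha}$ to a limit $U$. Because $\beta\in W^{1,2}$, the rescaled angles satisfy $\int_{B_1}|D\beta_r|^2=r^{2-n}\int_{B_r}|D\beta|^2$, and the lower-order terms ($b$, the $x$-dependence of $\FSL$) scale away.

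The key claim is that blow-ups are \emph{special Lagrangian with constant phase}: $F_n(D^2U)=\theta$ in the viscosity sense. To obtain this, we need $D\beta_r\to0$ in $L^2$ at $\cH^{n-2}$-almost every point and, more robustly, a monotone quantity forcing homogeneity. For the latter we would try an almost-monotonicity of $r^{-n}\int_{B_r}|D^2u-\text{avg}|^2$ coupled with the harmonic-type energy of $\beta$. Alternatively, we use the Caccioppoli decay $r^{2-n}\int_{B_r}|D\beta|^2\to0$ off a set of dimension at most $n-2$, and for special Lagrangian limits invoke the calibration/area monotonicity of the gradient graph. That monotonicity is available once the phase is constant, and it gives that tangent cones are $2$-homogeneous.

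Passing $\FSL(D^2u_r)=\beta_r$ to the limit requires stability of viscosity solutions under $C^{1,\alpha}$ convergence with $\beta_r\to\theta$ uniformly or in measure. This uses Hölder control of $\beta$ from Step 1 away from a small set.

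\textbf{Step 3 (Liouville and dimension reduction).} We show that every $2$-homogeneous $C^{1,1}$ viscosity solution of $F_n(D^2U)=\theta$ with $n\le4$ is quadratic. Its gradient graph is a special Lagrangian cone that is smooth off the origin in low dimensions; via the link it reduces to a minimal Legendrian in $\Sbb^{2n-1}$ that is graphical. For $n\le4$, rigidity results apply: Nadirashvili--Vl\u{a}du\c{t}-type arguments for small dimensions, and the classification of graphical SL cones in $\C^3,\C^4$ via the Hessian-eigenvalue structure and Yuan's results for $|\theta|\ge(n-2)\pi/2$. These force flatness.

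Federer's dimension reduction, using upper semicontinuity of the singular set under blow-up convergence (a consequence of Step 1), then gives $\dim_{\cH}\Sigma_u\le n-5$, with $\Sigma_u=\emptyset$ for $n\le4$. For $n=5$, a singular cone can only have an isolated singularity at its vertex. A compactness argument then shows that singular points cannot accumulate, so $\Sigma_u$ is locally finite.

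\textbf{Main obstacle.} The hardest step is Step 2's homogeneity, since no monotonicity formula is available for the Hamiltonian stationary equation. My plan is to exploit the fact that $\beta$ solves a divergence-form equation with bounded measurable coefficients. By De Giorgi--Nash, $\beta$ is Hölder continuous everywhere, so the blow-ups have exactly constant phase. The monotonicity of the special Lagrangian (calibrated) limit then does the rest, with almost-monotonicity for $u$ inherited up to an error $O(r^\alpha)$.
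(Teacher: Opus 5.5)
Your architecture matches the paper's: De Giorgi--Nash--Moser for $\beta$ from the Hessian bound alone, Savin's small-perturbation theorem for $\FSL(x,Du,D^2u)=\beta$, frozen constant-phase blow-ups, a Liouville theorem for $2$-homogeneous solutions when $n\le4$, and Federer reduction. But the step you flag as the main obstacle is not resolved by what you wrote. Monotonicity of the calibrated \emph{limit} only says that $R^{-n}\Mass(C\llcorner B_R)$ is nondecreasing; it does not make $C$ a cone. For that you need $R^{-n}\Mass(C\llcorner B_R)=\lim_j R^{-n}\Mass(T_j\llcorner B_R)=\omega_n\Theta^n(T,q_0)$ for every $R$. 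This requires two things: (i) an almost-monotonicity formula for the original, non-minimal graph $T=\llbracket\graph(Du)\rrbracket$ at $q_0=(x_0,Du(x_0))$, so that the density exists; and (ii) no loss of mass along the blow-up. Both are statements about $T$ and cannot be ``inherited'' from the limit.

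The missing idea is that H\"older continuity of $\beta$ makes $\Rea(e^{-i\beta(q_0)}\Omega)$ an almost-calibration of $T$ near $q_0$: $\Rea(e^{-i\beta(q_0)}\Omega)(\vec T)=\cos(\beta-\beta(q_0))\ge1-Cr^{2\alpha}$ on $B_r(q_0)$. Compare $T\llcorner B_r(q_0)$ with the cone over its slice via Stokes' theorem, using an $(n+1)$-dimensional filling of small mass to absorb $\dd\Omega\neq0$ and the variation of the metric. This gives $\tfrac rn\Mass(\langle T,|\cdot|,r\rangle)\ge(1-Cr^\delta)\Mass(T\llcorner B_r(q_0))$, hence $e^{Cr^\delta}r^{-n}\Mass(T\llcorner B_r(q_0))$ is nondecreasing. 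The same almost-calibration inequality, paired with the weak convergence $T_j\to C$, gives (ii).

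Note that a two-step blow-up would already suffice for $\dim_{\cH}\Sigma_u\le n-5$ and for $\Sigma_u=\emptyset$ when $n\le4$: blow up $u$ once (persistence of singularities and the Hausdorff content bound pass to the limit), then use the exact monotonicity of the resulting entire calibrated graph. However, your local finiteness argument for $n=5$ blows up $u$ at an accumulation point and needs that blow-up itself to be a cone. There the almost-monotonicity of $T$ cannot be bypassed.

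Some secondary points also need correcting.
\begin{itemize}
\item Blow-ups can have any phase, so results for $|\theta|\ge(n-2)\pi/2$ do not help. The Liouville input must be phase-independent, which is what Nadirashvili--Vl\u{a}du\c{t}'s four-dimensional theorem (with Yuan's lemma for $n\le3$) provides.
\item The dimension reduction requires the class of cones to be stable. At $Y=(y,Dv(y))\neq0$ the tangent cone splits as $\Span\{Y\}\times C_0$. You must check that $C_0$ lies in $(\Span\{Y,JY\})^\perp\cong\C^{n-1}$, is special Lagrangian there, and is again a gradient graph with $C^{1,1}$ potential. Since $Y$ is not horizontal, this requires changing the reference Lagrangian plane. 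Otherwise the Liouville theorem cannot be applied to the cross-sections.
\item Your excess-decay compactness argument toward the linearized fourth-order operator is unnecessary. Savin's theorem in Fan's inhomogeneous form already gives $C^{2,\alpha}$ on a neighborhood, hence relative closedness of $\Sigma_u$. As stated it is also unjustified, since weak $W^{2,2}$ compactness gives no strong convergence of Hessians.
\item Perturbative Evans--Krylov is unavailable, since it needs concavity, which fails in the subcritical range.
\end{itemize}
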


As an immediate corollary, we have the following, which is merely an intrinsic re-statement of \cref{thm:regularity-main}. 

\begin{corollary}\label{cor:global-main}
Let $(M^{2n},\omega,J,g)$ be a smooth almost K\"ahler manifold and let $L$ be a Hamiltonian stationary Lagrangian Lipschitz submanifold with Lagrangian angle locally in $W^{1,2}$. Then there exists a relatively closed set $\Sing(L) \subset L$ such that
\[
  L \text{ is smooth outside } \Sing(L)
  \qquad \text{and} \qquad
  \dim_{\cH} \Sing(L) \le n-5.
\]
Moreover, for $n\le4$, the $\Sing(L)$ is empty, and for $n=5$ it is locally finite.
\end{corollary}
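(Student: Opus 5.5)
The corollary is a localization of \cref{thm:regularity-main} followed by a patching argument. I will cover $L$ by Darboux charts, apply the theorem in each chart, and glue the resulting singular sets.

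\emph{Step 1 (charts).} For every $p\in L$, the definition of a Lagrangian Lipschitz submanifold gives a Darboux chart $\Psi:B_r\subset\C^n\to M$ and a function $u\in C^{1,1}(B_r)$ with $L\cap\Psi(B_r)=L_u\cap\Psi(B_r)$, as in \eqref{eq:DW-graph}. The generalized Lagrangian angle of $L$, pulled back by $\Phi_u$, is $\beta=\FSL(x,Du,D^2u)$. Since $\Phi_u$ is bi-Lipschitz and $\beta\in W^{1,2}_{\mathrm{loc}}$ on $L$, we get $\beta\circ\Phi_u\in W^{1,2}_{\mathrm{loc}}(B_r)$, so \eqref{eq:main-assumption} holds. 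Hamiltonian stationarity means that the area is critical under compactly supported Hamiltonian deformations. Using the description in \cref{sec:lag-angle}, which I take as given, this condition in the chart is exactly the weak equation \eqref{eq:main-weak} with the bounded field $b$ determined by $(\omega,J,g)$ and $\Psi$.

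\emph{Step 2 (local regularity).} \Cref{thm:regularity-main} on $\Ucal=B_r$ gives a relatively closed set $\Sigma_u$ with $u$ smooth off $\Sigma_u$, and $\dim_\cH\Sigma_u\le n-5$ (empty if $n\le 4$, locally finite if $n=5$). Define the local singular set as $\Phi_u(\Sigma_u)$. Off this set $L$ is locally the image of the smooth embedding $\Phi_u$, so $L$ is smooth there. Because $\Phi_u$ is Lipschitz, Hausdorff dimension does not increase, so $\dim_\cH\Phi_u(\Sigma_u)\le n-5$. Because $\Phi_u$ is a homeomorphism onto $L_u$, relative closedness, emptiness and local finiteness transfer as well.

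\emph{Step 3 (gluing).} Define $\Sing(L)$ intrinsically as the set of points of $L$ near which $L$ is not a smooth submanifold. This set is automatically relatively closed and does not depend on the choice of charts. In each chart domain it is contained in $\Phi_u(\Sigma_u)$. Since $L$ is covered by countably many such chart domains (second countability), $\Sing(L)$ is a countable union of sets of dimension at most $n-5$, hence has dimension at most $n-5$. It is empty when $n\le4$. When $n=5$ it is locally finite, because each point of $L$ lies in some chart where the set is locally finite.

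\emph{Main obstacle.} The one non-formal point is Step~1: checking that weak Hamiltonian stationarity of $L$ is equivalent, in the chart, to \eqref{eq:main-weak}, with the correct weight $\sqrt{\det g}\,g^{ij}$ and with $b\in L^\infty$. I would argue this from the first-variation formula. A Hamiltonian vector field $J\nabla\varphi$ moves the graph by changing $u$ to $u+t\varphi$ modulo lower-order terms. Its first variation is $\int\langle H,J\nabla\varphi\rangle$, which equals $\int\langle\nabla\beta-b,\nabla\varphi\rangle_g$. Here $b$ collects the non-integrability and torsion terms of the almost K\"ahler structure, and it is bounded because $u\in C^{1,1}$. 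The paper defers these details to \cref{sec:lag-angle}, so I treat this step as established there.
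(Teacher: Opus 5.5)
Your proposal is correct and follows the paper's approach: the paper treats the corollary as an immediate intrinsic restatement of \cref{thm:regularity-main}, applied in local Darboux charts, with \cref{sec:lag-angle} supplying the weak equation \eqref{eq:main-weak}, and the conclusions transferred through the bi-Lipschitz graph map $\Phi_u$. Your intrinsic definition of $\Sing(L)$, combined with the countable-cover argument, makes the gluing step explicit where the paper leaves it implicit.
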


The full regularity statement for $n\le4$ was proved in \cite{BW19_2d,Bh25} in Euclidean space. Our contribution to the theorem above concerns the case $n \geq 5$ and its extension to manifolds. Note that the dimension bound was previously established in \cite[Theorem 3.9]{Dimler2023} in the particular case of Lipschitz special Lagrangian graphs in $\C^n$.

\medskip

The next natural question concerns the sharpness of the above result. If a $C^{1,1}$ Hamiltonian stationary potential were non-smooth, the geometric tangent object obtained as a blow-up at a given singularity would be a singular graphical special Lagrangian cone. This observation leads to the following rigidity question, which has been a long-standing open problem; see, for instance, the work of
Jost--Xin \cite{JostXin2002} and Yuan \cite{Yuan2002} as well as Mooney--Savin \cite[pg 2]{MooneySavin2024}.

\begin{question}\label{q:sing-sLag-cones}
Does there exist, in some complex dimension $n\ge5$, a non-flat multiplicity-one graphical special Lagrangian cone whose potential is $C^{1,1}$? Equivalently, does there exist a two-homogeneous function $U\in C^{1,1}(\R^n)$ that is not quadratic (hence $U\notin C^2(\R^n)$) and solves the special Lagrangian equation?
\end{question}

Here, we place emphasis on the fact that the cone must be \emph{graphical}. Indeed, in the non-graphical setting, there exist a wide range of known examples of singular special Lagrangian cones \cite{SchoenWolfson99,Haskins04,HaskinsKapouleas07,Ohnita,HaskinsKapouleas13}, and recent works including \cite{Pacini-I,Pacini-II,DimlerGaia26} further demonstrate that singularities of special Lagrangians can even be prescribed at isolated points. 

Our next result, perhaps unexpectedly, shows that the answer to the above question is in fact positive.

\begin{theorem}\label{thm:counterexample-main}
There exists a nonzero two-homogeneous function
\[
  U\in C^{1,1}(\R^5)\setminus C^2(\R^5)
\]
which solves the zero phase special Lagrangian equation $F_5(D^2U)=0$ in the viscosity sense on all of $\R^5$. Moreover, $U$ is real analytic on $\R^5\setminus\{0\}$ and its gradient graph is a non-flat multiplicity-one special Lagrangian cone calibrated by $\Rea(\dd z_1\wedge\cdots\wedge\dd z_5)$.
\end{theorem}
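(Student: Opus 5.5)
We build $U$ from Cartan's isoparametric cubic on $\R^5$ and reduce $F_5(D^2U)=0$ to an ODE for a profile function. The cubic is
\[
P(x)=x_5^3+\tfrac32 x_5\bigl(x_1^2+x_2^2-2x_3^2-2x_4^2\bigr)+\tfrac{3\sqrt3}{2}\bigl(x_4(x_1^2-x_2^2)+2x_1x_2x_3\bigr),
\]
which satisfies $|DP|^2=9|x|^4$ and $\Delta P=0$. Its restriction $f=P|_{\Sbb^4}$ is isoparametric: $|\nabla f|^2=9(1-f^2)$ and $\Delta_{\Sbb^4}f=-12f$. We write $f=\cos(3\theta)$ with $\theta\in[0,\pi/3]$. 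The level sets of $\theta$ are the leaves of Cartan's foliation, with focal submanifolds (Veronese surfaces) at $\theta=0,\pi/3$.

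\textbf{Step 1: ansatz and eigenvalues.} We look for $U(x)=\tfrac12|x|^2 h(\theta(x/|x|))$. Since every level set is a homogeneous orbit of $SO(3)$, at each point of $\Sbb^4$ we compute $D^2U$ in an adapted frame: the radial direction, $\partial_\theta$, and the three principal directions of the isoparametric leaf. These have principal curvatures $\cot(\theta+k\pi/3)$ for $k=0,1,2$, each of multiplicity one. In this frame $D^2U$ is block diagonal.
\begin{itemize}
\item The $(r,\theta)$ block is the $2\times2$ matrix $\begin{pmatrix} h & h'\\ h' & h''+h\end{pmatrix}$ up to normalization, with eigenvalues $\mu_\pm$.
\item The three tangential eigenvalues are $\lambda_k=h+h'\cot(\theta+k\pi/3)$, coming from the Hessian of $h\circ\theta$ on the sphere, since $\nabla^2\theta$ restricted to a leaf is the second fundamental form times $|\nabla\theta|=1$.
\end{itemize}
The equation $F_5(D^2U)=0$ then becomes the scalar second-order ODE
\[
\arctan\mu_++\arctan\mu_-+\sum_{k=0}^{2}\arctan\bigl(h+h'\cot(\theta+k\pi/3)\bigr)=0
\]
on $(0,\pi/3)$. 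This ODE can be solved for $h''$, because $\arctan$ is monotone in the $(2,2)$ entry.

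\textbf{Step 2: boundary behaviour and existence.} Smoothness of $U$ across the focal sets requires $h'(0)=h'(\pi/3)=0$, since one of the $\lambda_k$ has a $\cot$ singularity there. By the cubic symmetry $f\mapsto -f$, given by $x\mapsto -x$, we may look for $h$ odd about $\theta=\pi/6$. Then $U(-x)=-U(x)$ becomes impossible for a homogeneous function of degree two, which instead forces $h(\pi/3-\theta)=-h(\theta)$. This fits the zero-phase equation, whose structure is odd under $A\mapsto -A$.

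We prove existence by a shooting argument. Start at the singular point $\theta=0$ with $h(0)=a>0$ and $h'(0)=0$, using a Frobenius/regular-singular analysis to get a one-parameter family of analytic solutions. Then vary $a$, scaling being fixed by $F$'s non-homogeneity, so that $h(\pi/6)=0$ with $h''(\pi/6)$ compatible with the oddness. An intermediate value argument in $a$ gives the solution: for small $a$ linearization gives $\Delta_{\Sbb^4}$-type behaviour, and for large $a$ the solution stays positive. Alternatively, solve on the whole interval variationally via the calibration functional.

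\textbf{Step 3: regularity and viscosity.} Analyticity on $\R^5\setminus\{0\}$ follows because $\theta$-dependence through $\cos3\theta=f$ is analytic once $h$ is an analytic function of $\cos 3\theta$ near the endpoints, which the Frobenius step provides. The function $U$ is two-homogeneous and $C^2$ away from $0$, so $D^2U$ is bounded and $U\in C^{1,1}$. Since $h$ is nonconstant, $U$ is not quadratic and hence $U\notin C^2$ at $0$. For the viscosity property at the origin, observe that the gradient graph is a closed special Lagrangian cone that is smooth away from the vertex. The calibration argument, together with the fact that a point has zero capacity in dimension $5$, gives that it is calibrated and that the Lipschitz potential is a weak, hence viscosity, solution. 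Here we use the known equivalence for Lipschitz gradient graphs of special Lagrangian type. Non-flatness is immediate.

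\textbf{Main obstacle.} The main difficulty is Step 2: controlling the singular ODE at both focal endpoints while ensuring that the phase stays on the zero branch. This means no eigenvalue blows up and no branch crossing of $\arctan$ occurs. We would first attempt this numerically to locate the shooting parameter, then make it rigorous via the symmetry reduction to a half interval and monotonicity in $a$.
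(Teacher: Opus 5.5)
\textbf{Where the gap is.} Your overall architecture is the paper's: the $\mathrm{SO}(3)$-invariant ansatz built on Cartan's cubic, reduction to a singular ODE for the profile, and a one-parameter family of analytic solutions at the focal orbit. It also matches in shooting to the midpoint $\theta=\pi/6$ (the paper's $s=0$) and in the odd reflection. But Step 2, which is the actual content of the theorem, is not proved, and the intermediate value heuristic you give fails at both ends.

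There is no ``large $a$''. In the paper's normalization $u=r^2f(s)$, $A=f(1)$, $B=f'(1)$, the Hessian at the focal orbit has eigenvalues $2A$ (three times) and $2A-9B$ (twice). Phase zero therefore forces $3\arctan(2A)+2\arctan(2A-9B)=0$, which has no solution once $|A|\ge\sqrt3/2$ (your $|a|\ge\sqrt3$). The analytic family simply does not exist beyond that threshold.

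At the other end, linearizing at the plane gives $\Delta u=0$, i.e.\ the Legendre equation $(1-s^2)f''-2sf'+\tfrac{10}{9}f=0$ of degree $2/3$. So $f_A\approx A\,P_{2/3}(s)$, and $f_A(0)\approx A\,P_{2/3}(0)>0$ for small $A>0$. That is the same sign you predict for large $a$, so even on its own terms your heuristic produces no sign change.

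\textbf{Continuation, and how the paper closes the gap.} Continuation up to the midpoint is itself a genuine issue. The equation can be solved for the second derivative only while the arctangents of the $(r,r)$-entry and of the three tangential eigenvalues sum to a value in $(-\pi/2,\pi/2)$. In the paper's formulation, this is the condition that the factor $D=\Rea(KT)$ multiplying $f''$ stays positive. Nothing in your argument keeps the solution in this region. ``Monotonicity in $a$'' is asserted with no mechanism behind it, and the variational alternative gives no control on graphicality or non-flatness.

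The paper closes exactly this gap with a computer-assisted proof. For $A_\pm=A_0\pm10^{-26}$, $A_0\approx0.6692$, interval arithmetic validates a contraction-mapping enclosure of the singular ODE on $t=1-s\in[0,h_0]$ around a high-degree Taylor polynomial. This is followed by a Gr\"onwall propagation on $t\in[h_0,1]$. Together these prove that the solution exists up to $s=0$ for all $A\in[A_-,A_+]$, with $f_{A_-}(0)>0>f_{A_+}(0)$. Continuity of $A\mapsto f_A(0)$ then yields $\widehat A$. A single condition $f(0)=0$ suffices, because the ODE is invariant under $(s,f,f')\mapsto(-s,-f,f')$.

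\textbf{Smaller slips.}
\begin{enumerate}
\item With $U=\tfrac12|x|^2h$, the $h'$ and $h''$ terms carry factors $\tfrac12$, so the tangential eigenvalues are $h+\tfrac12h'\kappa_k$.
\item $\Delta_{\Sbb^4}$ of the restricted cubic is $-18$ times it, not $-12$.
\item Your symmetry $h(\pi/3-\theta)=-h(\theta)$ is precisely $U(-x)=-U(x)$, which is perfectly compatible with $2$-homogeneity. The paper's $U$ is odd, and this is how it rules out $U\in C^2$; your ``$h$ nonconstant'' needs such a reason.
\item The viscosity property follows more simply from the fact that a $C^{1,1}$ function solving the equation a.e.\ is a viscosity solution, by Jensen's lemma.
\end{enumerate}
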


Note that taking products with additional euclidean factors yields examples in $\R^n$ for every $n \geq 5$ with an $(n-5)$-dimensional singular set; thus the dimensional bound in \cref{thm:regularity-main} (and \cref{cor:global-main}) is optimal.

\bigskip

In the minimal surface setting, the classical work of Barbosa \cite{Barbosa1979} and Fischer-Colbrie \cite{FischerColbrie1980}, addresses low-dimensional rigidity: every three-dimensional Lipschitz minimal graph is smooth, in arbitrary codimension. Through Federer’s dimension-reduction argument, this rigidity underlies the sharp $n-4$ bound for the singular set of area-minimizing Lipschitz graphs of arbitrary codimension; in fact, the same bound holds more generally for Lipschitz minimal graphs (see Dimler \cite{Dimler2023}). The bound is sharp even within the area-minimizing class, as witnessed by the calibrated Lawson--Osserman cone \cite{HL82}. In contrast to the second-order minimal surface system, the Hamiltonian stationary equation \eqref{eq:HSL} is fourth order, so no maximum principle is available directly for the potential.

Moreover, Hamiltonian stationary submanifolds do not have the monotonicity of mass ratios enjoyed by minimal submanifolds; this is already visible in complex dimension two (see the work of Minicozzi \cite{Minicozzi1995} and Schoen--Wolfson \cite{SchoenWolfson2001}), and higher-dimensional exact examples are discussed in the work of Orriols \cite[Appendix~B]{Orriols2026}. However, for a graph in $\C^n$, the equation decomposes into two second-order equations. When the Hessian of the potential is bounded, the Lagrangian angle satisfies a uniformly elliptic divergence-form equation, and the potential solves the uniformly elliptic Lagrangian mean curvature equation. This observation is behind the regularity results in \cite{ChenWarren2019,BW19_2d,Bh25} and is also the starting point here. However, to the best of our knowledge, this double second-order structure had not been exploited on manifolds, not even in the Calabi--Yau case. 

In general dimension, the earlier regularity results for single-valued Hamiltonian stationary graphs required additional hypotheses. Chen--Warren proved smoothness of $C^{1,1}$ weak solutions under a supercritical phase assumption, or a convexity assumption, or a small Hessian bound assumption, and also proved real analyticity for $C^1$ Hamiltonian stationary Lagrangian submanifolds in $\C^n$ \cite{ChenWarren2019}. In a symplectic manifold, Bhattacharya--Chen--Warren proved smoothness for a $C^1$ Hamiltonian stationary Lagrangian submanifold; the potential in the local gradient representation is assumed to be $C^2$ \cite{BCW2023}. In dimension two, all phase and convexity assumptions were removed in Bhattacharya--Warren \cite{BW19_2d}, and the same conclusion was obtained in dimensions three and four by Bhattacharya \cite{Bh25}. Below the Lipschitz graph threshold, Bhattacharya--Ogden proved smoothness for $C^{0,\gamma}$ Hamiltonian stationary graphs with $\gamma>1/3$ in the supercritical range and constructed singular examples when $\gamma=1/3$ \cite{BhattacharyaOgden2025}. Bhattacharya--Skorobogatova obtained a partial regularity theorem for a wider class of fourth order variational integrals on Hessian spaces \cite{BhattacharyaSkorobogatova2025}. These results do not decide what happens for a general bounded-Hessian Hamiltonian stationary graph when $n\ge5$.

There is a related distinction for the second-order equation \eqref{eq:sLag}. The values $\pm (n-2)\pi/2$ are referred to as the critical phases, since Yuan \cite{Yuan2006} showed that the level sets of the arctangent operator are convex whenever the absolute value of the phase is at least this large. In this critical and supercritical range, i.e., $|\theta_0|\geq (n-2)\frac{\pi}{2}$, interior Hessian estimates are known in every dimension \cite{WarrenYuan2009,WarrenYuan2010,WangYuan2014, Lcomp, RSsLag}; for variable phases, one needs additional regularity assumptions on the phase \cite{Bhattacharya2021,AB2d, Siyuan, BhattacharyaMooneyShankar2022,Zhou2025,Ding2024}. In the subcritical range, i.e., $|\theta_0|<(n-2)\frac{\pi}{2}$, this convexity is lost, and singular non-$C^{1,1}$ solutions are known \cite{NadirashviliVladut2010,WangYuan2013,MooneySavin2024}. Thus, the $C^{1,1}$ class lies at a natural borderline: the Hessian is bounded and the equation is uniformly elliptic along the solution, but the level set need not be convex.

We finally mention some other developments in the variational theory. Oh identified the Hamiltonian stability operator for minimal Lagrangians \cite{Oh1990}. Schoen--Wolfson studied area minimization among Lagrangian surfaces and the formation of isolated conical singularities \cite{SchoenWolfson2001}, and Joyce--Lee--Schoen constructed small Hamiltonian stationary Lagrangians in compact symplectic manifolds from rigid Euclidean models \cite{JoyceLeeSchoen2009}. Compactness under volume and total curvature bounds was proved in $\C^n$ by Chen--Warren and in compact symplectic manifolds by Chen--Ma \cite{ChenWarrenCompact2024,ChenMa2024}. Chen--Warren also introduced a volume-decreasing flow within a Hamiltonian isotopy class \cite{ChenWarren2024}. In the weak and parametrized setting, Pigati--Rivi\`ere proved regularity away from a locally finite singular set for parametrized Hamiltonian stationary Legendrian varifolds \cite{PigatiRiviere2024}; Gaia--Orriols--Rivi\`ere constructed Hamiltonian stationary surfaces with isolated Schoen--Wolfson singularities \cite{GaiaOrriolsRiviere2024}; Gaia constructed branched immersions with infinitely many such singularities accumulating at the boundary \cite{Gaia2025}; and Orriols proved an Allard-type theorem for Legendrian area-minimizing currents \cite{Orriols2026}. Here, however, we restrict throughout to (single-valued) multiplicity-one Lipschitz graphs.

\subsection{Overview of the proofs}\label{sec:proof-overview}

\subsubsection{Regularity results}
We first use the uniform Hessian bound in the weak Hamiltonian stationary equation to obtain a uniformly elliptic PDE in divergence form, and De Giorgi--Nash--Moser theory gives $\beta\in C^{0,\alpha}$. Since $\beta=\FSL(x,Du,D^2u)$, the potential is then a viscosity solution of a uniformly elliptic second-order equation with H\"older right-hand side. Via version of Savin's ``small perturbation" $\eps$-regularity theorem \cite{Savin2007} relative to a quadratic polynomial, in the inhomogeneous form proved by Fan \cite{Fan2026}, we show regularity of $u$ whenever it is sufficiently close to a quadratic polynomial. We may thus characterize the regular set by this closeness property; its complement is the relatively closed set $\Sigma_u$.

We next study the behavior of $L_u$ around a point $x_0 \in \Sigma_u$. The H\"older continuity of the angle makes $\Rea(e^{-i\beta(x_0)}\Omega)$ close to a calibration, and then a comparison argument gives the almost-monotone quantity
\[
  r\longmapsto e^{Cr^\delta}r^{-n}\Mass(\llbracket L_u \mathbin{\llcorner} B_r(x_0, Du(x_0))\rrbracket)
\]
for a suitable choice of $C>0$ and $\delta >0$. It follows that every blow-up of $\llbracket L_u \rrbracket$ is a Lipschitz graphical special Lagrangian cone. Federer's dimension-reduction argument can then be iterated within this class of cones, giving $\dim_{\cH}\Sigma_u\le n-5$ when combined with the known classification of special Lagrangian cones in low dimensions.

\subsubsection{Singular graphical special Lagrangian cone} 
The construction in \cref{thm:counterexample-main} is inspired by the examples of Nadirashvili--Tkachev--Vl\u{a}du\c{t} \cite{NadirashviliTkachevVladut2012}; see also \cite{NadVla11-1,NadVla11-2}. We use the $\mathrm{SO}(3)$-invariant Cartan cubic $P$ on $\R^5$ and look for a two-homogeneous potential of the form
\begin{equation}\label{e:Cartan-cubic}
  u(x)=|x|^2 f\!\left(\frac{P(x)}{|x|^3}\right).
\end{equation}
There is an important difference from the earlier constructions. There, the equation could be adapted to the singular solution, while here the special Lagrangian equation is fixed in advance. The ansatz \eqref{e:Cartan-cubic} reduces the equation to a singular second-order ODE for $f$ on $[-1,1]$. Regularity at the focal orbit $s=1$ gives a one-parameter family of local real-analytic solutions, parametrized by the value $A=f(1)$. We then look for a value of $A$ for which the corresponding solution satisfies $f(0)=0$.

The main point is to show that these local solutions continue all the way from $s=1$ to $s=0$. We do this by a computer-assisted shooting argument using interval arithmetic. For two nearby values $A_-$ and $A_+$, the resulting endpoint values $f_{A_-}(0)$ and $f_{A_+}(0)$ have opposite signs. Continuous dependence on $A$ then gives a value $\widehat A\in(A_-,A_+)$ with $f_{\widehat A}(0)=0$. The symmetry of the reduced equation allows us to reflect this solution across $s=0$, giving a real-analytic profile on the whole interval $[-1,1]$.

\begin{remark}\label{rem:phase-zero-and-convexity}
The odd-reflection step described above is specific to the phase-zero special Lagrangian equation. The resulting profile is odd and $P(-x)=-P(x)$, so $D^2u(-x)=-D^2u(x).$ Since $\FSL(-M)=-\FSL(M)$, a constant-phase equation $\FSL(D^2u)=\theta_0$ can be preserved by this reflection only if $\theta_0=0$ and thus this approach does not extend to nonzero phases. It is unclear whether a shooting argument carried out directly on the full interval $[-1,1]$, without using reflection, could succeed for other subcritical phases.
\end{remark}

\subsection*{Organization}
The paper is organized as follows. In \cref{sec:notation}, we record notation and preliminary facts. In \cref{sec:lag-angle}, we derive the weak generalized Lagrangian angle equation in an almost K\"ahler manifold and prove H\"older continuity of the angle. In \cref{sec:eps-reg}, we use the $\eps$-regularity result of Savin \cite{Savin2007} and Fan \cite{Fan2026}, re-centered around a homogeneous quadratic polynomial, to define the singular set. In \cref{sec:monotonicity}, we derive almost-monotonicity of mass ratios for Lagrangian graphs with H\"{o}lder angle and complete the proof of \cref{thm:regularity-main} via dimension reduction. In \cref{sec:counterexample}, we construct the five-dimensional singular cone and its higher-dimensional products. 

\subsection*{Acknowledgements}
The authors thank Camillo De Lellis, Alessio Figalli, and Yu Yuan for helpful comments. A.B. acknowledges the support of NSF grant DMS-2350290, the Simons Foundation grant MPS-TSM-00002933, and a Bill Guthridge Fellowship from UNC-Chapel Hill. A.S.~is grateful for the generous support of Dr.~Max R\"ossler, the Walter Haefner Foundation, and the ETH Z\"urich Foundation. This research was conducted during the period A.S.~served as a Clay Research Fellow. {Part of this work was conducted while G.O.~and A.S.~were visiting A.B. at UNC-Chapel Hill. They thank the UNC Mathematics Department for their hospitality.}

\subsection*{Note on LLM Usage}
AI tools (ChatGPT-5.6 Sol and Codex) were used to assist with the computer-assisted proof presented in Subsection \ref{global_sol} and with some tedious but routine computations in \cref{sec:counterexample}. The accompanying code was developed through a long sequence of interactions between the authors and these AI tools. The computations referred to above were reworked and rewritten by the authors, who independently checked and verified all associated code and outputs. All the text in the article was written by the authors.\\

\section{Notation and conventions}\label{sec:notation}

We write $B_r(x)$ for the Euclidean ball of radius $r$ centered at $x \in \R^n$,
$B_r=B_r(0)$, and $\omega_n=|B_1|$. We will write $B_r^n(x)$ and $B_r^{2n}(x)$ respectively to distinguish between such balls in $\R^n$ and $\R^{2n}$ when it is necessary. The space of real
symmetric $n\times n$ matrices is denoted by $\Sym(n)$. For
$A,B\in\Sym(n)$, the notation $A\le B$ means that $B-A$ is positive
semidefinite. Constants denoted by $C$ may change from line to line and depend only on the quantities explicitly stated. We use $\langle \cdot, \cdot \rangle$ to denote the inner product of two vectors, induced by a metric that will be clear from context. We will use the notation $\Bbf_R$ for a ball of radius $R$ centered at the origin in a normed space.

Recall that the graph of a Lipschitz map $G : \R^n \to \R^m$, denoted $\graph (G)$, has a canonical orientation and thus induces a multiplicity-one integral current $\llbracket \graph(G) \rrbracket$. We will often work with conical currents of this type, which are of the form $\llbracket \graph (Dw) \rrbracket$ for a $C^{1,1}_{\mathrm{loc}}$ potential function $w$ on $\R^n$. We will use the terminology ``graphical special Lagrangian cone with $C^{1,1}$ potential" for such cones.

We will refer to $\C^n$ as the standard complex vector space $(\R^{2n}, J_0)$ endowed with its standard metric $g_0$ and symplectic form $\omega_0$. When we work locally in a symplectic manifold $(M, \omega)$, we will use a Darboux chart $\Psi : B_r \subset (\R^{2n}, \omega_0) \to M$ and identify tensors in $M$ with their pullback to $\R^{2n}$.

We use $\Mass(T)$ for the mass of a current $T$ in a Riemannian manifold $(M, g)$, $\spt T$ for its support, and $\Sing T$ for the (interior) singular set, defined as the complement of the open set where its support is smooth with locally constant multiplicity; the latter is the (interior) regular set $\Reg T$. For a relatively open set $U \subset M$, $T \llcorner U$ denotes the current given by the restriction of $T$ to $U$. For a.e.~$r>0$, we use the notation $\langle T,|\cdot|,r\rangle$ for the current given by the radius-$r$ spherical slice of $T$. We refer the reader to \cite{SimonGMT} for more details concerning the relevant GMT background and notation.

For closed sets $E_j,E\subset\R^n$, local Hausdorff convergence means
Hausdorff convergence of $E_j\cap\overline B_R$ to
$E\cap\overline B_R$ for every $R$. We write $\cH^s_\infty$ for the
$s$-dimensional Hausdorff content.

An almost K\"ahler manifold $(M^{2n}, \omega, J, g)$ is a symplectic manifold $(M, \omega)$ together with an almost complex structure $J$ such that $g := \omega(\cdot, J \cdot)$ defines a Riemannian metric. If $J$ is integrable, we speak of a K\"ahler manifold, and if it further admits a normalized holomorphic $(n, 0)$ form $\Omega$ it is Calabi--Yau.

\section{The Lagrangian angle beyond Calabi--Yau manifolds}\label{sec:lag-angle}
Our starting point for the regularity of Hamiltonian-stationary submanifolds is a generalized notion of Lagrangian angle that can be defined in arbitrary almost K\"ahler manifolds, and satisfies a second-order elliptic PDE in divergence form.

\subsection{Geometric setting}
Let $(M^{2n}, \omega)$ be a symplectic manifold. Recall that $M$ can be covered by Darboux charts, that is, by the images of  symplectomorphisms $\Psi : \mathcal{U} \subset (\R^{2n}, \omega_0) \xrightarrow{\sim} U \subset (M, \omega)$, where $\omega_0$ is the standard symplectic structure on $\R^{2n}$.

For us, a Lagrangian Lipschitz submanifold is a set $L \subset M$ which can be locally written in Darboux charts as a Lipschitz graph \emph{along} a Lagrangian plane, and whose tangent planes are Lagrangian almost everywhere. More precisely, we require that for every $q \in L$ there exists a Darboux chart as above with $q \in L \cap U$, a Lagrangian plane $\Lambda \subset (\R^{2n}, \omega_0)$, a complementary $n$-plane $\Lambda_0 \subset \R^{2n}$ and a Lipschitz map $F : \Lambda_0 \to \Lambda$ such that $L \cap U = \Psi( \{ x + F(x) : x \in \Lambda_0 \} \cap \mathcal{U})$,
and such that the symplectic form $\omega$ vanishes at almost every tangent plane.

In the presence of a compatible complex structure on $(\R^{2n}, \omega_0)$ we have a more familiar and convenient description:
\begin{lemma}
    \label{lem:lipschitz-gradient-graph}
    Let $\Lambda \subset (\R^{2n}, \omega_0)$ be a Lagrangian plane and $J_0$ a complex structure on $\R^{2n}$ compatible with $\omega_0$. Set $g_0 := \omega_0(\cdot, J_0 \cdot)$ and $\Lambda_0 := J_0 \Lambda$. Then a Lagrangian graph along $\Lambda$ can be written as
    \begin{equation}
        \label{eq:lagrangian-lipschitz-graph}
        \{ x + J_0 \nabla_{g_0} u(x) : x \in \Lambda_0 \}
    \end{equation}
    for some $u \in C^{1,1}(\Lambda_0)$, and conversely any set of the form \eqref{eq:lagrangian-lipschitz-graph} is a Lagrangian Lipschitz graph. Moreover, if $J_1, J_2$ are compatible complex structures, $g_1$ and $g_2$ are the respective metrics, and $\Lambda_1 := J_1 \Lambda$, $\Lambda_2 := J_2 \Lambda$ the respective complementary planes, then the respective potentials $u_1$ and $u_2$ differ (up to a constant) by a quadratic function.
    \begin{proof}
        The first part is standard: we have for all vectors $x, y \in \Lambda_0$ a.e.
        \begin{align*}
            0
            &= \omega_0(x + D F \cdot x, y + D F \cdot y) \\
            &= \omega_0(x, y) + \omega_0( D F \cdot x, D F \cdot y) + \omega_0(x, D F \cdot y) + \omega_0(D F \cdot x, y) \\
            &= -g_0(x, D (J_0 F) \cdot y) + g_0(D(J_0 F) \cdot x, y),
        \end{align*}
        thus $J_0 F : \Lambda_0 \to \Lambda_0$ is the gradient of a $C^{1,1}$ function $-u$ with respect to $g_0$. The converse follows similarly.

        For the last part, $\Lambda_1$ and $\Lambda_2$ are both complements of $\Lambda$, hence $\Lambda_2$ can be written as a graph over $\Lambda_1$ along $\Lambda$, that is, $\Lambda_2 = \{ x + S x : x \in \Lambda_1 \}$ for some linear map $S : \Lambda_1 \to \Lambda$. Now by the first part, $S x = J_1 \nabla_{g_1} w$ for some $w \in C^{1,1}(\Lambda_1)$ and it follows that $w$ is quadratic, up to a constant. Finally, given a potential $u_2 \in C^{1,1}(\Lambda_2)$, if we let $u_1(x) := u_2(x) + w(x)$ for $x \in \Lambda_1$, where we extend $u_2$ from $\Lambda_2$ to the whole $\R^{2n}$ by making it constant on $n$-planes parallel to $\Lambda$, its Hamiltonian vector field is $J_1 \nabla_{g_1} u_2(x) = J_2 \nabla_{g_2} u_2(x + Sx)$ and hence
        \begin{align*}
            \{ x + J_1 \nabla_{g_1} u_1(x) : x \in \Lambda_1 \}
            &= \{ x + J_2 \nabla_{g_2} u_2(x + Sx) + S x : x \in \Lambda_1 \} \\
            &= \{ y + J_2 \nabla_{g_2} u_2(y) : y \in \Lambda_2 \},
        \end{align*}
        showing that indeed $u_1$ is the corresponding potential for $\Lambda_1$.
    \end{proof}
\end{lemma}

This shows that a gradient graph $\{ x + J_1 \nabla_{g_1} u(x) : x \in \Lambda_1 \}$ with respect to one choice of compatible complex structure $J_1$ and Lagrangian plane $\Lambda_1$ in $(\R^{2n}, \omega_0)$ remains so with respect to another choice of complex structure $J_2$, provided that we change the domain of the new graph to $\Lambda_2 := J_2 J_1 \Lambda_1$, and the $L^\infty$ norm of the Hessian of the potential only changes by a constant depending on $J_1$ and $J_2$. This fact (and its local version) will be used often throughout.

\subsection{Lagrangian angle and first variation formula}
On each small enough neighborhood of $M$ we can find a smooth $(n, 0)$-form $\Omega$ of norm $2^{n/2}$, or equivalently, of comass one; we will call such $\Omega$ a normalized $(n, 0)$-form. Notice that we are not imposing any holomorphicity or parallelism condition, so $e^{i \varphi} \Omega$ is also of this class for any smooth real-valued function $\varphi$.

For any oriented Lagrangian plane $\Lambda \subset T_p M$, we may use $\Omega$ to define a notion of Lagrangian angle of $\Lambda$ via $\Omega|_{\Lambda} = e^{i \beta(p, \Lambda)} \vol_\Lambda$. For a Lagrangian submanifold $L$, this defines a function $e^{i\beta} : L \to \Sbb^1$. If $L$ is a Lipschitz graph, $\beta$ is automatically single-valued, and we will assume throughout that $\beta \in W^{1,2}_\mathrm{loc}(L)$.

Recall that $L$ is Hamiltonian-stationary if for any $h \in C^\infty_c(U)$, the first variation of volume of $L$ along the Hamiltonian vector field $X = J \nabla h$ associated to $h$ vanishes. If $\phi_t$ denotes the flow of $X$, using the definition of $\beta$ we have:
\begin{align*}
\left. \frac{\dd}{\dd t} \right|_{t=0} \operatorname{Area}(\phi_t(L))
&= \left. \frac{\dd}{\dd t} \right|_{t=0} \int_{\phi_t(L)} e^{-i \beta_t} \Omega
= -i \int_{L} \dot \beta e^{-i \beta} \Omega + \int_{L} e^{-i \beta} \mathcal{L}_X \Omega \\
&= -i \int_{L} \dot \beta \vol_L + \int_{L} e^{-i \beta} (X \slot \dd \Omega + \dd (X \slot \Omega))\,,
\end{align*}
where $\Dot{\beta} := \frac{\dd}{\dd t} (\beta_t \circ \phi_t)\Big|_{t=0}$ and $\Lcal_X$ denotes the Lie derivative along $X$. We integrate by parts the last term on the right-hand side above and use the complex-linearity of $\Omega$, after pointwise splitting $X = X^\top + X^\perp$ into its tangent and normal parts to $L$:
\begin{align*}
    \int_{L} e^{-i \beta} \dd (X \slot \Omega)
    &= -\int_{L} (-i e^{-i \beta}) \dd \beta \wedge (X \slot \Omega) \\
    &= \int_{L} e^{-i \beta} (i \, \dd \beta \wedge (X^\top \slot \Omega) + \dd \beta \wedge (J X^\perp \slot \Omega)) \\
    &= \int_{L} e^{-i \beta} (i \, \langle X^\top, \dd \beta \rangle \Omega  + \langle J X^\perp , \dd \beta \rangle \Omega ) \\
    &= \int_{L} (i \, \langle X^\top, \dd \beta \rangle + \langle J X^\perp , \dd \beta \rangle ) \vol_L,
\end{align*}
using the fact that, for a vector $Y$ tangent to $L$,
\[
  \left( \dd \beta \wedge (Y \slot \Omega) \right) |_{L}
   =  \langle Y, \dd \beta \rangle \Omega|_L - \left( Y \slot (\dd \beta \wedge \Omega) \right)|_L
   =  \langle Y, \dd \beta \rangle \Omega|_L.
\]
For the same reason, only $e^{-i \beta} X^\perp \slot \dd \Omega = e^{-i \beta} (J \nabla^L h) \slot \dd \Omega$ survives in the $e^{-i \beta} X \slot \dd \Omega$ term above. Hence, taking the real part in the first variation formula we get
\begin{align*}
\left. \frac{\dd}{\dd t} \right|_{t=0} \operatorname{Area}(\phi_t(L))
&= \int_{L} \Rea \left[ e^{-i \beta} (J \nabla^L h) \slot \dd \Omega \right] - \langle \nabla^L h , \dd \beta \rangle \vol_L
= \int_{L} \langle \nabla^L h, \tau - \dd \beta \rangle
\end{align*}
where $\tau$ is a real $1$-form of class $L^\infty$ on $L$ defined via $\tau(Y) \, \vol_L := \Rea[e^{-i \beta} (J Y) \slot \dd \Omega]|_L$. Comparing with the ordinary first variation formula yields $ H \slot \omega=\tau-\dd\beta$ for the mean curvature $H$, and Hamiltonian stationarity is therefore equivalent to
\begin{equation}\label{eq:beta-divergence-geometric}
  \dd_L^*(\dd\beta-\tau)=0
\end{equation}
in the weak sense.
This formulation is independent of the unit trivialization: if $\Omega'=e^{i\varphi}\Omega$, then
$\beta'=\beta+\varphi|_L$ and $\tau'=\tau+\dd_L\varphi$.
In the K\"ahler case one may write $\dd\Omega=\gamma\wedge\Omega$ with $\gamma$ of type $(0,1)$, and a direct calculation gives $\tau = 2\Ima\gamma|_L$.  For a parallel Calabi--Yau volume form the $1$-form $\tau$ vanishes, recovering the classical formula $\Delta_L\beta=0$.

\subsection{H\"older regularity of the angle}
Next we fix a Darboux chart and write $L = L_u = \{ (x, D u(x)) : x \in B_2 \}$ in these coordinates for a function $u \in C^{1,1}(B_2, \R)$. Writing $\beta \equiv \beta \circ \Phi_u$ and $g_u=\Phi_u^*g$, the Lagrangian angle is given by a fully nonlinear operator $\beta(x) = \FSL(x, D u(x), D^2 u(x))$; see Subsection \ref{ss:viscosity-setup} below for more details. Moreover, the Hamiltonian stationarity condition \eqref{eq:beta-divergence-geometric} becomes
\begin{equation}\label{eq:coordinate-beta}
  \int_{B_2} A^{ij}(x)\bigl(\partial_i \beta - b_i \bigr) \partial_j h \,\dd x=0 \quad \forall h \in C^\infty_c(B_2),
  \qquad
  A^{ij}=\sqrt{\det g_u}\,g_u^{ij},
\end{equation}
where $b$ is the pullback of the bounded one-form $\tau$ via $\Phi_u$.  A bound for $D^2u$ and the smooth bounded geometry of the chart imply
\begin{equation*}\label{eq:A-elliptic}
  \lambda |\xi|^2\le A^{ij}(x)\xi_i\xi_j\le\Lambda|\xi|^2,
  \qquad \|b\|_{L^\infty}\le C.
\end{equation*}

\begin{lemma}\label{lem:angle-holder}
In the above setting, there are $\alpha\in(0,1)$ and $C<\infty$, depending only on $n$, the local Hessian bound, and the ambient geometry, such that
\begin{equation*}\label{eq:angle-holder-estimate}
  [\beta]_{C^{0,\alpha}(B_{3/2})}
  \le C\bigl(1+\|\beta\|_{L^2(B_2)}\bigr).
\end{equation*}
\end{lemma}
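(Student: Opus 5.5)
The plan is to read \eqref{eq:coordinate-beta} as a linear, uniformly elliptic divergence-form equation for $\beta$ with bounded measurable coefficients and a bounded divergence right-hand side. Then De Giorgi--Nash--Moser, in its inhomogeneous form, gives the estimate. Concretely, \eqref{eq:coordinate-beta} says that $\beta\in W^{1,2}(B_2)$ is a weak solution of
\[
  \partial_j\bigl(A^{ij}\partial_i\beta\bigr)=\partial_j f^j,\qquad f^j:=A^{ij}b_i\in L^\infty(B_2).
\]
The first step is to justify the ellipticity and boundedness of $A^{ij}=\sqrt{\det g_u}\,g_u^{ij}$. In the Darboux chart, $g_u=\Phi_u^*g$ has entries $g_{ab}(x,Du)$ contracted against $(\Id, D^2u)$. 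So $g_u\ge c\,\Id$, with $c$ depending on the lower bound of the ambient metric on the chart. Also $g_u\le C(1+|D^2u|^2)\Id$. Hence $\lambda,\Lambda$ depend only on $n$, $\|D^2u\|_{L^\infty}$ and the chart geometry. Boundedness of $b$ follows because $\tau$ is built from $\dd\Omega$, $J$ and the unit phase $e^{-i\beta}$, and is pulled back by $D\Phi_u$, which is bounded.

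The second step is the Hölder estimate. I would invoke the standard result (e.g. Gilbarg--Trudinger Theorem 8.24, or Morrey/Campanato-type estimates) for $\partial_j(A^{ij}\partial_i w)=\partial_j f^j$ with $f\in L^q$, $q>n$. It gives
\[
  [\beta]_{C^{0,\alpha}(B_{3/2})}\le C\bigl(\|\beta\|_{L^2(B_2)}+\|f\|_{L^\infty(B_2)}\bigr),
\]
with $\alpha$ and $C$ depending on $n,\lambda,\Lambda$. Since $\|f\|_{L^\infty}\le \Lambda\|b\|_{L^\infty}\le C$, this is the claimed bound. One can cover $B_{3/2}$ by small balls whose doubles lie in $B_2$ and apply the interior estimate at unit-comparable scale. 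Gilbarg--Trudinger is phrased with $\sup|\beta|$; if so, I first use the local boundedness estimate (Theorem 8.17 there) to bound $\sup_{B_{7/4}}|\beta|$ by $C(\|\beta\|_{L^2(B_2)}+\|f\|_{L^\infty})$, and then apply the Hölder estimate.

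The main point needing care is not the elliptic theory but checking that the weak formulation applies as stated. The test functions in \eqref{eq:coordinate-beta} are smooth, but $W^{1,2}_0$ test functions are needed for the De Giorgi iteration (e.g.\ $\eta^2(\beta-k)_\pm$). This follows by density, since $A^{ij}(\partial_i\beta-b_i)\in L^2$. A second point is that $\beta$ is single-valued and in $W^{1,2}$ on the graph chart, which is assumed in \eqref{eq:main-assumption}. Beyond this, I only need that all constants are uniform in $u$ given the Hessian bound, which the first step ensures.
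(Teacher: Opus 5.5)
Your proposal is correct and follows the paper's proof: read \eqref{eq:coordinate-beta} as a uniformly elliptic divergence-form equation with bounded divergence data $A b$, then apply the De Giorgi--Nash--Moser interior H\"older estimate (Gilbarg--Trudinger, Theorem~8.24). Your extra steps fill in details the paper leaves implicit: the ellipticity bounds for $A^{ij}$, the local boundedness estimate (Theorem~8.17) needed to replace $\sup|\beta|$ by $\|\beta\|_{L^2(B_2)}$, and extending the weak formulation to $W^{1,2}_0$ test functions by density.
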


\begin{proof}
Equation \eqref{eq:coordinate-beta} is a uniformly elliptic divergence-form equation with right-hand side the divergence of the bounded field $A b$.  The De Giorgi--Nash--Moser interior estimate with $L^q$ divergence data, for any $q>n$, gives the above estimate; see, for example, \cite[Theorem~8.24]{GilbargTrudinger}.
\end{proof}

\subsection{The second-order angle operator}\label{ss:viscosity-setup}
Next we study the fully nonlinear operator that gives the generalized Lagrangian angle of a gradient graph in Darboux coordinates on an arbitrary almost K\"ahler manifold.

Let $\Psi : B_r \subset \C^n \to M$ be a Darboux chart around a point $q = \Psi(0, 0) \in M$ in which a normalized $(n, 0)$-form $\Omega$ is defined, and consider the pullback tensors $g_0 := \Psi^* g(0)$, $J_0 := \Psi^* J(0)$ and $\Omega_0 := \Psi^*\Omega(0)$. After multiplying by a global phase, we can assume that $J_0, g_0, \Omega_0$ and $\omega_0 := \Psi^* \omega$ are the standard Calabi--Yau structure on $\C^n$; we will omit the pullback by $\Psi$ to simplify the notation, and using the compatibility of $\omega_0$, $J$ and $g$, decompose
\begin{equation}
\label{eq:J-and-g}
    J = J(x, p) =
    \begin{pmatrix}
        J_{11} & J_{12} \\
        J_{21} & J_{22}
    \end{pmatrix}
    \quad \text{and} \quad
    g = g(x, p)
    =
    \begin{pmatrix}
        g_{11} & g_{12} \\
        g_{21} & g_{22}
    \end{pmatrix}
    =
    \begin{pmatrix}
        J_{21} & J_{22} \\
        -J_{11} & -J_{12}
    \end{pmatrix}.
\end{equation}
In this setting, we define\footnote{Note that there a canonical smooth choice of $\beta$ among graphical Lagrangian planes with $\beta(0; \R^n) = 0$.} the operator $\mathcal{F} : B_r \times \Sym(n) \to \R$ by
\begin{equation}\label{eq:variable-phase-equation}
  \FSL(x, p, M) := \beta(x, p; \operatorname{span}\{ (e_1, M e_1), \dots, (e_n, M e_n)\}).
\end{equation}

\begin{proposition}
There exists $r$ sufficiently small such that the operator $\FSL$ is locally uniformly elliptic on $B_r$, namely
\begin{equation}\label{eq:F-ellipticity}
  \lambda_K \tr N \le
  \FSL(x,p, M+N) - \FSL(x,p,M)
  \le\Lambda \tr N
\end{equation}
for constants $0 < \lambda_K < \Lambda < \infty$ whenever $(x,p) \in B_r$ and $\|M\|, \|N\| \le K$.
\begin{proof}
We fix $(x, p) \in B_r$ and compute the derivative of the Lagrangian angle of $\operatorname{span} \{ v_1, \dots, v_n \}$ at $(x, p)$ for an arbitrary $1$-parameter family of linearly independent vectors $\{ v_i(t) \}$: if we write
\begin{equation}
\label{eq:v-dot-A-B}
    \dot v_i = \sum_{j=1}^n a_{ji} v_j + \sum_{j=1}^n b_{ji} J v_j
\end{equation}
and $\Omega(v_1(t), \dots, v_n(t)) = e^{i \beta(t)} w(t)$ with $w(t) > 0$, then
\begin{align*}
     \frac{\dot w}{w} + i \dot \beta
    &= (e^{i \beta} w)^{-1} \left.\frac{\dd}{\dd t}\right|_{t=0} \Omega( v_1(t), \dots, v_n(t)) \\
    &= (e^{i \beta} w)^{-1} \left[ \Omega( \dot v_1, \dots, v_n) + \dots + \Omega( v_1, \dots, \dot v_n) \right] \\
    &= (e^{i \beta} w)^{-1} \left( 
    (a_{11} + \dots + a_{nn}) \Omega( v_1, \dots, v_n) 
    + i (b_{11} + \dots + b_{nn}) \Omega( v_1, \dots, v_n) 
    \right) \\
    &= \tr A + i \tr B,
\end{align*}
thanks to the $\C$-linearlity of $\Omega$, so taking imaginary parts we get $\dot \beta = \tr B$. We apply this to $v_i(t) = (e_i, M(t) e_i)$ for $M(t)$ a path in $\Sym(n)$. In this case, the matrices $A$ and $B$ in \eqref{eq:v-dot-A-B} solve the system
\[
\begin{pmatrix} 0 \\ \dot M \end{pmatrix}
= \begin{pmatrix} \Id \\ M \end{pmatrix} A + \begin{pmatrix} J_{11} & J_{12} \\ J_{21} & J_{22} \end{pmatrix} \begin{pmatrix} \Id \\ M \end{pmatrix} B,
\]
so that $A = - (J_{11} + J_{12} M) B$ and
\begin{equation}
\label{eq:B-from-Mdot}
    B = [J_{21} + J_{22} M - M J_{11} - M J_{12} M ]^{-1} \dot M.
\end{equation}
then \eqref{eq:B-from-Mdot} and \eqref{eq:J-and-g} show that
\[
    D_M \mathcal{F}(x, p, M)[N] = \tr \left( [g_{11} + g_{12} M + M g_{21} + M g_{22} M ]^{-1} N \right).
\]
The eigenvalues of $g_{11} + g_{12} M + M g_{21} + M g_{22} M$ are clearly bounded from above when $\| M \| \leq K$; to see that they are bounded from below (uniformly in $M$), observe that after shrinking $r$ if necessary we may assume that $\| g - g_0 \| \leq \tfrac{1}{4}$, so that for any $v \in \R^n$,
\begin{align*}
    &g_{11}(v, v) + g_{12}(v, Mv) + g_{21}(Mv, v) + g_{22}(M v, M v) \\
    &\quad \geq \frac{3}{4} |v|^2 - 2 \cdot \frac{1}{4} |v| |M v| + \frac{3}{4} |M v|^2
    \geq \frac{1}{2} |v|^2 + \frac{1}{2} |M v|^2
    \geq \frac{1}{2} |v|^2.
\end{align*}
Thus, integrating $D_M\FSL(x,p,M)[N]$ along $M+tN$ for $N \geq 0$ yields \eqref{eq:F-ellipticity}.
\end{proof}
\end{proposition}

\section{\texorpdfstring{$\eps$-regularity and the singular set}{epsilon-regularity and the singular set}}\label{sec:eps-reg}

Here, we use Fan's inhomogeneous version \cite[Theorem~1.7]{Fan2026} of Savin's ``small perturbation'' $\eps$-regularity result \cite{Savin2007} (see also \cite{BS_optimal_reg}), in the case of closeness to a homogeneous quadratic polynomial. We begin by recalling some basic setup for general viscosity solutions of second-order fully nonlinear equations.

\subsection{Viscosity solutions}
We use the convention of \cite{Fan2026} for an operator
$G(x,p,A)$ that is nondecreasing in $A$ (with respect to the partial ordering of symmetric matrices). A viscosity subsolution of
$G=0$ satisfies
\[
  G(x_0,D\varphi(x_0),D^2\varphi(x_0))\ge0
\]
whenever $\varphi\in C^2$ touches it from above, while a viscosity
supersolution satisfies the reverse inequality whenever $\varphi$
touches it from below. Next, we show that almost-everywhere solutions are viscosity solutions.

\begin{lemma}\label{lem:ae-viscosity}
Let $G:\R^n\times\R^n\times\Sym(n)\to\R$ be continuous and
nondecreasing with respect to its matrix variable. If $v\in C^{1,1}_{\mathrm{loc}}$
satisfies
\begin{equation}\label{e:ae-eq}
  G(x,Dv,D^2v)=0
\end{equation}
almost everywhere, then $v$ is a viscosity solution of $G=0$.
\end{lemma}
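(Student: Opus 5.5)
The plan is to argue by contradiction using test functions, with Jensen's lemma for semiconvex functions supplying a set of positive measure of good points. Since $v\in C^{1,1}_{\mathrm{loc}}$, Rademacher's theorem applied to $Dv$ shows that $v$ is twice differentiable almost everywhere. Let $E$ be the full-measure set of points where $v$ is twice differentiable and \eqref{e:ae-eq} holds. I treat the subsolution property; the supersolution property is symmetric, with the inequalities reversed and "above" replaced by "below".

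\emph{Setting up the contradiction.} Suppose $\varphi\in C^2$ touches $v$ from above at $x_0$, but $G(x_0,D\varphi(x_0),D^2\varphi(x_0))<0$. Replace $\varphi$ by $\varphi_\eps:=\varphi+\eps|x-x_0|^2$. For $\eps$ small, continuity of $G$ still gives $G(x_0,D\varphi_\eps(x_0),D^2\varphi_\eps(x_0))\le -2\delta$ for some $\delta>0$. Now $\varphi_\eps$ touches $v$ strictly from above. Set $w:=\varphi_\eps-v$. Then $w\ge \eps|x-x_0|^2$ near $x_0$, $w(x_0)=0$, and $w$ is semiconvex on $\overline{B_\rho(x_0)}$, because $D^2 w\ge -C$ in the distributional sense thanks to $\|D^2v\|_{L^\infty}\le C$. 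Using continuity of $G$ and of $D\varphi_\eps, D^2\varphi_\eps$, I choose $\rho$ and $\eta>0$ so that
\[
G(x,D\varphi_\eps(x)-p,D^2\varphi_\eps(x))\le-\delta\qquad\text{for } |x-x_0|<\rho,\ |p|<\eta.
\]

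\emph{Producing good points.} This is the key step. By Jensen's lemma, the set $K$ of points $y\in B_\rho(x_0)$ has positive Lebesgue measure, where $K$ consists of those $y$ at which $w$ is touched from below by an affine function $\ell(x)=w(y)+p\cdot(x-y)$ with $|p|<\eta$. Here $\eta$ is taken small relative to $\eps\rho^2$, so that touching points cannot lie on $\partial B_\rho$. Equivalently, $K$ is the contact set of the convex envelope of $w$ with small-slope supporting planes, and its measure is bounded below by the ABP-type estimate for semiconvex functions. This is the main obstacle: in general one needs the semiconvexity to ensure $|K|>0$. Here it is supplied directly by the $C^{1,1}$ bound on $v$.

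\emph{Concluding.} Since $|K|>0$ and $E$ has full measure, pick $y\in K\cap E$. At $y$, the function $w$ is twice differentiable and has a local minimum after subtracting $\ell$. Hence $Dw(y)=p$ and $D^2w(y)\ge0$, that is,
\[
Dv(y)=D\varphi_\eps(y)-p,\qquad D^2v(y)\le D^2\varphi_\eps(y).
\]
By the monotonicity of $G$ in the matrix variable and the choice of $\rho,\eta$,
\[
0=G(y,Dv(y),D^2v(y))\le G(y,D\varphi_\eps(y)-p,D^2\varphi_\eps(y))\le-\delta<0,
\]
which is a contradiction. Therefore $G(x_0,D\varphi(x_0),D^2\varphi(x_0))\ge0$, and $v$ is a viscosity subsolution. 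The argument for supersolutions uses $\varphi_\eps=\varphi-\eps|x-x_0|^2$ and applies Jensen's lemma to $v-\varphi_\eps$.
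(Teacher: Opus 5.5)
Your proof is correct and follows the paper's proof: you make the touching strict by adding a small quadratic, use Jensen's lemma to get a positive-measure set of touching points with small affine slopes, intersect it with the full-measure set $E$, and use the monotonicity of $G$ to reach a contradiction. One slip: for minima of $w=\varphi_\eps-v$, Jensen's lemma needs $w$ to be semi\emph{concave}, i.e.\ $D^2w\le C$, equivalently $D^2v\ge -C$ (as in the paper, where the lemma is applied to the semiconvex function $v-\psi$ at its strict maximum); semiconvexity of $w$ alone is not enough, since $w=|x|$ gives $|K|=0$, but $\|D^2v\|_{L^\infty}\le C$ supplies the correct bound as well, so the argument goes through.
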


\begin{proof}
Let $E$ be the full Lebesgue measure set on which $v$ is twice differentiable and
the equation \eqref{e:ae-eq} holds. Suppose $\varphi\in C^2$ touches $v$ from above at
$x_0$ and that the required subsolution inequality fails. After replacing
$\varphi$ by
$\psi=\varphi+\frac\varepsilon2|x-x_0|^2$, continuity gives
\begin{equation}\label{e:subsol-contradiction}
  G(x,D\psi(x)+q,D^2\psi(x))<0
\end{equation}
for $x$ in a sufficiently small neighborhood of $x_0$ and $q \in \R^n$ with $|q|$ small, while $v-\psi$ has a strict local
maximum at $x_0$. Jensen's maximum lemma
\cite[Lemma~A.3]{CIL1992} provides a positive Lebesgue-measure set $F$ of nearby
local maxima for small affine perturbations of $v-\psi$. Choosing a point
$y\in E\cap F$, for some small $p$ we therefore have
\[
  Dv(y)=D\psi(y)-p,
  \qquad
  D^2v(y)\le D^2\psi(y).
\]
Combining this with the fact that $y\in E$, the nondecreasing property of $G$ in the third variable, and \eqref{e:subsol-contradiction}, we arrive at
\[
  0=G(y,Dv(y),D^2v(y))
  \le G(y,D\psi(y)-p,D^2\psi(y))<0,
\]
which is a contradiction. The supersolution inequality follows identically by instead using $\psi=\varphi-\frac\varepsilon2|x-x_0|^2$ and applying Jensen's maximum lemma to $\psi-v$.
\end{proof}

Applied to \eqref{eq:variable-phase-equation}, \cref{lem:ae-viscosity} permits the use of fully nonlinear viscosity theory.

For a homogeneous quadratic polynomial $P$, define the corresponding operator and right-hand side that are both re-centered around $P$:
\begin{align}
  \mathcal G_P(x,p,M)
  &:={\FSL}\bigl(x,DP(x)+p,D^2P+M\bigr)
    -{\FSL}\bigl(x,DP(x),D^2P\bigr)\,,
  \label{eq:centered-operator}\\
  h_P(x)
  &:=f(x)-{\FSL}\bigl(x,DP(x),D^2P\bigr)\,.
  \label{eq:centered-rhs}
\end{align}
For $\rho>0$ and $P$ as above, recall the H\"{o}lder seminorm
\begin{equation*}\label{eq:centered-oscillation}
  [\mathcal G_P]_{x;\alpha,\rho}
  :=\sup_{\substack{\|M\|,|p|\le\rho\\x\ne y\in B_1}}
  \frac{|\mathcal G_P(x,p,M)-\mathcal G_P(y,p,M)|}{|x-y|^\alpha}\,.
\end{equation*}

\begin{proposition}\label{prop:epsilon-regularity}
Fix $0<K<\infty$, $\alpha\in(0,1)$, and a $C^2$-compact family $\Oscr_{K+1}$ of operators $\FSL$ that satisfy \eqref{eq:F-ellipticity} with uniform constants on
\[
  \mathcal K_{K+1}:=
  \bigl\{(x,p,A):x\in\overline B_1,\ |p|\le K+1,\ \|A\|\le K+1\bigr\}\,.
\]
There are constants $\eps_0>0$, $C<\infty$ depending only on
$n,K,\alpha$ and the compact family, with the following property.
Suppose
\[
  u\in C^{1,1}(B_1),
  \qquad
  \|Du\|_{L^\infty(B_1)}+\|D^2u\|_{L^\infty(B_1)}\le K,
\]
and suppose that for some $\Fcal \in \Oscr_{K+1}$, $u$ satisfies
\begin{equation}\label{e:viscosity-sol-u}
  \FSL(x,Du,D^2u)=f(x)
\end{equation}
in the viscosity sense. Let
\[
  P(x)=\frac12x^TA_0x,
  \qquad \|A_0\|\le K\,.
\]
Then $w:=u-P$ is a viscosity solution of
\begin{equation}\label{e:quadr-recentering}
  \mathcal G_P(x,Dw,D^2w)=h_P(x)
  \qquad\text{in }B_1,
\end{equation}
where $\mathcal G_P$ and $h_P$ are defined in
\eqref{eq:centered-operator}-\eqref{eq:centered-rhs}. If
\begin{equation}\label{eq:eps-small}
  \|u-P\|_{L^\infty(B_1)}
  +\|h_P\|_{C^{0,\alpha}(B_1)}
  +[\mathcal G_P]_{x;\alpha,1/2}
  \le\eps_0,
\end{equation}
then $u\in C^{2,\alpha}(B_{1/2})$ and
\[
  \|u-P\|_{C^{2,\alpha}(B_{1/2})}\le C.
\]
\end{proposition}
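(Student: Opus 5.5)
The argument has two parts: the re-centered equation \eqref{e:quadr-recentering} is checked directly from the definitions, and the regularity estimate is a reduction to Fan's inhomogeneous small-perturbation theorem \cite[Theorem~1.7]{Fan2026}.

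\emph{Step 1: the re-centered equation.} Let $\varphi\in C^2$ touch $w=u-P$ from above at $x_0$. Then $\varphi+P$ touches $u$ from above at $x_0$, so by \eqref{e:viscosity-sol-u}
\[
\FSL\bigl(x_0,DP(x_0)+D\varphi(x_0),D^2P+D^2\varphi(x_0)\bigr)\ge f(x_0).
\]
Subtracting $\FSL(x_0,DP(x_0),D^2P)$ from both sides gives
\[
\mathcal G_P\bigl(x_0,D\varphi(x_0),D^2\varphi(x_0)\bigr)\ge h_P(x_0).
\]
Supersolutions are handled the same way. The operator $\mathcal G_P$ has the following properties:
\begin{itemize}
\item $\mathcal G_P(x,0,0)=0$, so $w\equiv0$ solves the homogeneous problem;
\item $\mathcal G_P$ is $C^2$ in $(p,M)$ on $\{|p|,\|M\|\le 1\}$, with bounds that are uniform because $|DP|\le K$ on $B_1$, $\|D^2P\|\le K$, and $\Oscr_{K+1}$ is $C^2$-compact on $\mathcal K_{K+1}$;
\item $\mathcal G_P$ is uniformly elliptic there by \eqref{eq:F-ellipticity}.
\end{itemize}
These are exactly the structural hypotheses Fan requires of the operator, namely ellipticity, $C^2$ regularity near the origin, and vanishing at zero data. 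The $x$-dependence enters only through the seminorm $[\mathcal G_P]_{x;\alpha,1/2}$.

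\emph{Step 2: application of Fan's theorem.} Fan's theorem states that if $w$ is a viscosity solution in $B_1$ with $\|w\|_{L^\infty}$, $\|h_P\|_{C^{0,\alpha}}$ and the $x$-Hölder oscillation of the operator all below a threshold $\eps_0$, then $w\in C^{2,\alpha}(B_{1/2})$ with a universal bound. Here $\eps_0$ depends only on the ellipticity constants, the $C^2$ bounds in $(p,M)$, $n$ and $\alpha$. By Step 1 these quantities depend only on $n$, $K$, $\alpha$ and the compact family, so the dependence claimed in the statement holds.

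\emph{Main obstacle: matching hypotheses.} The delicate point is verifying that our data fit Fan's formulation. First, Fan's $C^{2,\alpha}$ conclusion is stated through the size of $w$, so we must confirm that it gives $\|w\|_{C^{2,\alpha}(B_{1/2})}\le C$; this is the asserted bound $\|u-P\|_{C^{2,\alpha}(B_{1/2})}\le C$. Second, Fan requires control of the operator only on a small neighborhood of $(p,M)=(0,0)$, here of radius $1/2$. If his normalization demands a different radius, I would rescale
\[
w_r(x)=r^{-2}w(rx).
\]
This rescaling changes the seminorms only favourably: $[h]_{C^{0,\alpha}}$ scales by $r^\alpha$, and the $L^\infty$ smallness is traded for the interior Lipschitz and Hessian bounds available from $K$.

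The \emph{a priori} Hessian bound $\|D^2u\|\le K$ ensures that $D^2w$ stays in the region where the operator is defined and where compactness applies. One also needs to check that the compactness of $\Oscr_{K+1}$ makes all of the above constants uniform, which amounts to a routine finite-cover / compactness check.
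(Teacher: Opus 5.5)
Your proposal is correct and follows the paper's route. The re-centered viscosity equation comes from testing with $P+\varphi$. Then you verify Fan's structural hypotheses for $\mathcal G_P$ on $\{|p|,\|M\|\le 1/2\}$: it vanishes at zero data, it is uniformly elliptic, it has a uniform Lipschitz bound in $p$ and a common modulus of continuity for $D_M\mathcal G_P$ coming from $C^2$-compactness, and it has no zeroth-order term. All arguments stay in $\mathcal K_{K+1}$ because $|DP|,\|A_0\|\le K$.

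The rescaling fallback in your last paragraph is unnecessary, and it would not work as stated. For $w_r(x)=r^{-2}w(rx)$ one only gets $\|w_r\|_{L^\infty(B_1)}\le r^{-2}\eps_0$, and $Dw_r=r^{-1}Dw(r\,\cdot)$, so the $L^\infty$ and gradient sizes get worse, not better. The bounds available from $K$ are not small and so cannot restore the smallness. If Fan's theorem used a different neighborhood radius, that would simply be absorbed into the dependence of $\eps_0$ on the structural constants.
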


\begin{proof}
The viscosity identity \eqref{e:quadr-recentering} follows directly by
testing the viscosity subsolution (respectively supersolution) inequality for $u$ with $P+ \varphi$, where $\varphi$ is an arbitrary $C^2$ function that touches $w$ from above (respectively below) at an arbitrary point. Let us now verify the remaining
hypotheses of \cite[Theorem~1.7]{Fan2026}. First, we clearly have
\[
  \mathcal G_P(x,0,0)=0.
\]
Moreover, for $M,N\in\Sym(n)$ with $N\ge0$, the local uniform ellipticity of $\FSL$ in turn implies
\[
  \lambda\tr N\le
  \mathcal G_P(x,p,M+N)-\mathcal G_P(x,p,M)
  \le\Lambda\tr N
\]
with constants uniform over $\Oscr_{K+1}$. Observe that all
arguments occurring for $\|M\|,\|N\|,|p|\le 1/2$ and $\|A_0\| \leq K$ remain in $\mathcal K_{K+1}$. Compactness in $C^2$ then gives a uniform
constant $L$ and a common modulus of continuity $\omega$ such that
\begin{align*}
 |\mathcal G_P(x,p,M)-\mathcal G_P(x,q,M)|
 &\le L|p-q|,\\
 \|D_M\mathcal G_P(x,p,M)-D_M\mathcal G_P(y,q,N)\|
 &\le\omega\bigl(\|M-N\|+|p-q|+|x-y|\bigr)
\end{align*}
whenever the displayed variables lie in $\Kcal_{K+1}$. Thus Fan's structure condition holds with $b_0=L$ and $c_0=0$,
since the operator $\Gcal_P$ has no zeroth-order dependence. The final term $[\mathcal G_P]_{x;\alpha,1/2}$ in
\eqref{eq:eps-small} is exactly the small spatial oscillation of the
centered operator required in Fan's theorem, while the first two terms give the required smallness of the solution $w$ and of the right-hand side.
All structural constants are uniform over $\Oscr_{K+1}$ and over
$\|A_0\|\le K$. Fan's theorem therefore applies to $w$ and yields the claimed $C^{2,\alpha}$ estimate.
\end{proof}

\subsection{The singular set}
Let us now use the $\eps$-regularity result in Proposition \ref{prop:epsilon-regularity} to define the singular set $\Sigma_u$ in \cref{thm:regularity-main}. We will then verify the claimed dimension estimate on it in \cref{sec:monotonicity}, after first obtaining almost-monotonicity for mass ratios in order to obtain conical blowups at singularities. Let $x_0 \in \Ucal$, let $p_0:=Du(x_0)$ and define the rescalings
\begin{equation}\label{eq:rescaled-angle-operator}
  \FSL_{x_0,r}(y,p,A)
  :=\FSL(x_0+ry,p_0+rp,A)\,,
  \qquad \beta_{x_0,r}(y):=\beta(x_0+ry)\,.
\end{equation}
If a given homogeneous quadratic polynomial $P$ satisfies
\begin{equation}\label{eq:rescaled-frozen-normalization}
  \FSL(x_0,p_0,D^2P)=\beta(x_0),
\end{equation}
then smoothness of $\FSL$ and \cref{lem:angle-holder} give
\begin{equation*}\label{eq:rescaled-centered-smallness}
  [\mathcal G_{P,r}]_{y;\alpha,1/2}\le Cr,
  \qquad
  \|h_{P,r}\|_{C^{0,\alpha}(B_1)}
  \le C\bigl(r+r^\alpha[\beta]_{C^{0,\alpha}}\bigr)\,,
\end{equation*}
where $\mathcal G_{P,r}$ and $h_{P,r}$ are formed from $\FSL_{x_0,r}$ and $f = \beta_{x_0,r}$ as in \eqref{eq:centered-operator}-\eqref{eq:centered-rhs}.  Thus \cref{prop:epsilon-regularity} applies at every sufficiently small scale once the rescaled potential
\begin{equation}\label{eq:potential-blowup}
  u_{x_0,r}(y) :=\frac{u(x_0+ry)-u(x_0)-rDu(x_0)\cdot y}{r^2}.
\end{equation}
is sufficiently close in $L^\infty$ to such a $P$.

Now, let $\cQ$ denote the set of homogeneous quadratic polynomials, and define the {singular set} of $u$ as
\begin{equation}\label{eq:singular-set-definition}
  \Sigma_u:=\left\{x_0:\liminf_{r\downarrow0}\inf_{P\in\cQ}
  \|u_{x_0,r}-P\|_{L^\infty(B_1)}>0\right\}\,.
\end{equation}

If the liminf in \eqref{eq:singular-set-definition} vanishes, one may choose a sequence of scales $r_j\downarrow0$ and homogeneous quadratics $P_j$ such that
\[
  \|u_{x_0,r_j}-P_j\|_{L^\infty(B_1)}\longrightarrow0\,.
\]
The normalization $u_{x_0,r_j}(0)=0$, $Du_{x_0,r_j}(0)=0$ and the uniform Hessian bound on $u_{x_0,r_j}$ over $B_1$ imply that the coefficients of $P_j$ stay bounded.  Passing to a subsequence, $P_j\to P_0$ and $u_{x_0,r_j}\to P_0$ uniformly on $B_1$.  Moreover, passing to the limit in the rescaled equations \eqref{eq:rescaled-angle-operator} yields
\[
  \FSL(x_0,p_0,D^2P_0)=\beta(x_0).
\]
Thus, $P_0$ satisfies \eqref{eq:rescaled-frozen-normalization} and so the above discussion guarantees that \cref{prop:epsilon-regularity} applies to $u_{x_0,r_j}$ with the quadratic $P_0$ for all sufficiently large $j$.

\cref{prop:epsilon-regularity} then implies that the complementary set $\Ucal\setminus\Sigma_u$, which we will refer to as the regular set of $u$, is open and $u\in C^{2,\alpha}$ there.  Standard Schauder bootstrapping gives smoothness when the ambient space is $\C^n$.  In an ambient smooth almost K\"{a}hler manifold, the graph is then $C^{1,\alpha}$ and \cite[Theorem 1.1]{BCW2023} gives smoothness. Alexandrov's Theorem shows that $\Sigma_u$ has Lebesgue measure zero.

We conclude this section with the following persistence of singularities result, which is a simple consequence of Proposition \ref{prop:epsilon-regularity} and will come in useful later. We use the notation $C^{0,\alpha}_x$ below to denote H\"{o}lder regularity with respect to the $x$-variable.

\begin{lemma}\label{lem:singular-persistence}
Let $0 < K < \infty$ and $\alpha \in (0,1)$. Let $\{u_j\} \subset C^{1,1}(B_1)$ be a sequence of viscosity solutions to equations of the form \eqref{e:viscosity-sol-u} with $C^{0,\alpha}_x$ operators $\{\Fcal_j\}\subset \Oscr_{K+1}$ as defined in \cref{prop:epsilon-regularity}, and with $C^{0,\alpha}$ right-hand sides $f_j$. Suppose in addition that $\sup_j \|u_j\|_{C^{1,1}(B_1)} \leq K$. Assume that
$u_j\to u_\infty$ in $C^1_{\mathrm{loc}}$, and that for $\alpha \in (0,\alpha_0)$ the quantities $\Fcal_j(\cdot, Du_j(\cdot),D^2 u_j(\cdot))$, $f_j$ converge in $C^{0,\alpha}_x$ to a limiting equation and right-hand side, respectively. If
$x_j\in\Sigma_{u_j}$ and $x_j\to x_\infty$, then
$x_\infty\in\Sigma_{u_\infty}$.
\end{lemma}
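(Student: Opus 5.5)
We argue by contraposition: assuming $x_\infty\notin\Sigma_{u_\infty}$, we show $x_j\notin\Sigma_{u_j}$ for all large $j$. The mechanism is that the $\eps$-regularity of \cref{prop:epsilon-regularity} is \emph{open} under $L^\infty$ perturbations of the potential at a fixed scale, with smallness constants uniform over the compact family $\Oscr_{K+1}$. Since $x_\infty$ is a regular point, the definition \eqref{eq:singular-set-definition} and the discussion following it provide a scale $r>0$ and a homogeneous quadratic $P_0$ with
\[
  \FSL_\infty(x_\infty,Du_\infty(x_\infty),D^2P_0)=f_\infty(x_\infty)
  \qquad\text{and}\qquad
  \|(u_\infty)_{x_\infty,r}-P_0\|_{L^\infty(B_1)}\le\eps_0/4 .
\]
We may moreover take $r$ small enough that the rescaled centered quantities $[\mathcal G_{P_0,r}]_{y;\alpha,1/2}$ and $\|h_{P_0,r}\|_{C^{0,\alpha}(B_1)}$ for the limit problem are each at most $\eps_0/8$. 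This uses the rescaling estimates after \eqref{eq:rescaled-frozen-normalization}: they are $O(r)$ and $O(r+r^\alpha)$ with constants controlled by the $C^2$ bounds on the family and the H\"older bound on $f_\infty$.

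With $r$ and $P_0$ fixed, we transfer all three smallness conditions to $u_j$ at the centers $x_j$. First, $u_j\to u_\infty$ in $C^1_{\mathrm{loc}}$ and $x_j\to x_\infty$ imply $(u_j)_{x_j,r}\to(u_\infty)_{x_\infty,r}$ uniformly on $B_1$ for this fixed $r$, since the rescaling \eqref{eq:potential-blowup} involves only $u$, $Du$ at the base point and division by $r^2$. Hence $\|(u_j)_{x_j,r}-P_0\|_{L^\infty(B_1)}\le\eps_0/2$ for large $j$. Second, the operator seminorm $[\mathcal G^j_{P_0,r}]_{y;\alpha,1/2}$ is bounded by $Cr$ uniformly in $j$, because the $\FSL_j$ lie in the $C^2$-compact family $\Oscr_{K+1}$ and $|Du_j(x_j)|\le K$.

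The third condition, on the right-hand side, is the main obstacle. The difficulty is that $P_0$ is normalized for the limit problem and not for the $j$-th one, so $h^j_{P_0,r}(0)=f_j(x_j)-\FSL_j(x_j,Du_j(x_j),D^2P_0)$ need not vanish. We handle this with the assumed $C^{0,\alpha}_x$ convergence of the $f_j$ and of the operators. These give $f_j(x_j)\to f_\infty(x_\infty)$, and, using $C^2$-compactness together with $Du_j(x_j)\to Du_\infty(x_\infty)$, also $\FSL_j(x_j,Du_j(x_j),D^2P_0)\to\FSL_\infty(x_\infty,Du_\infty(x_\infty),D^2P_0)$. Hence $|h^j_{P_0,r}(0)|\to0$. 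The H\"older seminorm of $h^j_{P_0,r}$ on $B_1$ is bounded by $C(r+r^\alpha\sup_j[f_j]_{C^{0,\alpha}})$, which is uniform in $j$ because convergent sequences in $C^{0,\alpha}$ are bounded. If the normalization defect is still inconvenient, we instead replace $P_0$ by $P_j=P_0+\frac12 t_jx^Tx$ with $t_j\to0$ chosen via the strict monotonicity \eqref{eq:F-ellipticity} so that $\FSL_j(x_j,Du_j(x_j),D^2P_j)=f_j(x_j)$. This restores the normalization exactly while changing the $L^\infty$ distance only by $o(1)$.

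With all three conditions in place, \eqref{eq:eps-small} holds for $(u_j)_{x_j,r}$ with $\FSL_j$ for all large $j$. \cref{prop:epsilon-regularity} then gives $C^{2,\alpha}$ bounds for $u_j$ near $x_j$, so $u_j$ is uniformly $C^{2,\alpha}$ close to a quadratic at every smaller scale. Therefore the $\liminf$ in \eqref{eq:singular-set-definition} vanishes at $x_j$, that is, $x_j\notin\Sigma_{u_j}$, contradicting the hypothesis $x_j\in\Sigma_{u_j}$. Hence $x_\infty\in\Sigma_{u_\infty}$.
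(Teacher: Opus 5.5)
Your proposal is correct and follows the paper's argument. You argue by contradiction, fix a scale $r$ and a quadratic $P$ at which all three quantities in \eqref{eq:eps-small} are small for $u_\infty$ at $x_\infty$, transfer these bounds to $(u_j)_{x_j,r}$ using the $C^1$ convergence and the uniform structure of $\Oscr_{K+1}$, and then apply \cref{prop:epsilon-regularity} to conclude that $x_j\notin\Sigma_{u_j}$.

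Your treatment of the right-hand side is more careful than the paper's one-line appeal to uniform convergence, which on its own does not control the $C^{0,\alpha}$ norm of $h_P$. You separate two things: the normalization defect $h^j_{P_0,r}(0)\to0$, and a seminorm bound $C(r+r^\alpha\sup_j[f_j]_{C^{0,\alpha}})$. The seminorm bound is small once $r$ is chosen small relative to $\sup_j[f_j]_{C^{0,\alpha}}$, which is finite by the assumed convergence; this is possible because the vanishing $\liminf$ at $x_\infty$ supplies arbitrarily small admissible scales.
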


\begin{proof}
Suppose for a contradiction that $x_\infty\notin\Sigma_{u_\infty}$. Then $x_\infty$ lies in the
regular set $B_1 \setminus \Sigma_{u_\infty}$ of $u_\infty$, so by \cref{prop:epsilon-regularity}, there exists a sufficiently small radius $r>0$
and a homogeneous quadratic polynomial $P$ such that, after rescaling at
$x_\infty$ by $r$, the three quantities in \eqref{eq:eps-small} are each
strictly below $\eps_0/2$ on $B_1$. 

For this fixed $r$, the convergence $x_j\to x_\infty$ and
$u_j\to u_\infty$ in $C^1$ implies
\[
  (u_j)_{x_j,r}\longrightarrow (u_\infty)_{x_\infty,r}
  \qquad\text{uniformly on }B_1.
\]
The rescaled operators and right-hand sides converge uniformly as well, with the
same ellipticity and $C^2$ bounds. Hence, for all sufficiently large $j$,
the rescaled equation at $x_j$ satisfies \eqref{eq:eps-small} with the
same polynomial $P$ and with threshold $\eps_0$. Proposition
\ref{prop:epsilon-regularity} may then be applied to all such $x_j$, in turn implying that each one is a regular point of $u_j$,
contradicting the fact that $x_j\in\Sigma_{u_j}$. Thus we indeed have
$x_\infty\in\Sigma_{u_\infty}$ as claimed.
\end{proof}

\section{Almost-monotonicity and dimension reduction}\label{sec:monotonicity}
In this section, we demonstrate almost-monotonicity of mass ratios of a {Hamiltonian-stationary} Lagrangian graph in an almost K\"{a}hler manifold, which will in turn allow us to obtain conical blowups.

For the rest of this section we work in a Darboux chart $\Psi : B_r \subset (\R^{2n}, \omega_0) \to (M, \omega)$ where a normalized $(n, 0)$-form $\Omega$ is defined and write $L = L_u$ as in \eqref{eq:DW-graph}. We will omit $\Psi$ and treat $g$, $J$, $\omega$ and $\Omega$ as tensors on $\R^{2n}$; in particular $\omega = \omega_0 = \sum_i \dd x^i \wedge \dd p_i$. Here $\Mass$ denotes the mass with respect to the metric $g$.

\begin{proposition}\label{prop:almost-monotonicity}
Let $T$ be the multiplicity-one integral current induced by an oriented Hamiltonian-stationary Lagrangian Lipschitz submanifold $L$ in a smooth almost K\"ahler manifold $(M^{2n},\omega,J,g)$, and suppose its generalized angle $\beta$ is $W^{1,2}_\mathrm{loc}$.
Then for every interior point $p\in\spt T$ there exist constants $r_0(p,n), C(n), \delta(n) > 0$ such that
\begin{equation*}\label{eq:almost-monotone}
  r\longmapsto e^{C r^\delta}\frac{\Mass(T\llcorner B_r(p))}{r^n}
  \quad\text{is nondecreasing on }(0,r_0).
\end{equation*}
Consequently the density
\begin{equation*}\label{eq:density}
  \Theta^n(T,p):=\lim_{r\downarrow0}\frac{\Mass(T\llcorner B_r(p))}{\omega_n r^n}
\end{equation*}
exists and is finite.
\end{proposition}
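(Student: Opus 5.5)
The plan is the comparison argument sketched in the overview. Near $p$, the form $\phi:=\Rea(e^{-i\beta(p)}\Omega)$ is close to a calibration, and $T$ is almost calibrated by it.

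We work in the Darboux chart around $p$, with $g$, $J$, $\Omega$ standard at $p$ as in \cref{ss:viscosity-setup}, and write $L=L_u$ with $\|D^2u\|_{L^\infty}\le K$. By \cref{lem:angle-holder}, $\beta\in C^{0,\alpha}$ along $L$. On $B_r(p)$ this gives two estimates. First, since $\Omega|_{TL}=e^{i\beta}\vol_L$, we have $\phi|_{TL}=\cos(\beta-\beta(p))\vol_L\ge(1-Cr^{2\alpha})\vol_L$. Second, the comass of $\phi$ with respect to $g$ is at most $1+Cr$, because $\Omega$ is normalized and smooth, and at $p$ the form $\phi$ is the standard special Lagrangian calibration. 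I also expect $\|\dd\phi\|_{L^\infty(B_r)}\le C$. The trouble is that $\phi$ need not be closed. I would fix this by replacing $\phi$ with $\tilde\phi:=\phi_p+\dd\eta$, where $\phi_p$ is the constant-coefficient form at $p$ and $\eta$ is the homotopy (cone) primitive of $\phi-\phi_p$. Then $\tilde\phi$ is closed, with $|\tilde\phi-\phi|\le Cr$ on $B_r$. Alternatively, one can use the constant form $\phi_p$ directly and absorb the metric discrepancy $|g-g_0|\le Cr$ into the error.

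The key comparison runs as follows. For a.e.\ $r$, let $S_r:=\langle T,|\cdot-p|,r\rangle$ and let $C_r:=0\times\!\!\!\!\times S_r$ be the cone over the slice, taken in the Euclidean chart with vertex $p$. Then $\partial(T\llcorner B_r-C_r)=0$. Since $\tilde\phi$ is closed on the ball, Stokes gives $\int_{T\llcorner B_r}\tilde\phi=\int_{C_r}\tilde\phi$. Combining this with the two estimates above yields
\[
(1-Cr^{\delta})\,\Mass(T\llcorner B_r)\le\int_{T\llcorner B_r}\tilde\phi=\int_{C_r}\tilde\phi\le(1+Cr)\,\Mass(C_r)\le(1+Cr)\frac rn\Mass(S_r),
\]
with $\delta=\min(2\alpha,1)$. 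The last step is the cone-mass formula; it holds up to a further factor $1+Cr$, which accounts for the difference between $g$ and the Euclidean metric. Next, the coarea inequality gives $\Mass(S_r)\le\frac{\dd}{\dd r}\Mass(T\llcorner B_r)$ for a.e.\ $r$. Writing $m(r):=\Mass(T\llcorner B_r)$, we get $m\le(1+Cr^\delta)\frac rn m'$, i.e.\ $\frac{\dd}{\dd r}\log(r^{-n}m)\ge -Cr^{\delta-1}$. Integrating, $e^{C'r^\delta}r^{-n}m(r)$ is nondecreasing. Since $m$ is monotone, it is BV in $r$, and the singular part of its derivative is nonnegative, so the a.e.\ inequality suffices. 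Existence of $\Theta^n(T,p)$ then follows by monotonicity. Finiteness comes from the Lipschitz graph bound $m(r)\le C(K)r^n$.

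The main obstacle is making the calibration error genuinely $O(r^\delta)$ uniformly. We need the normalization constants of \cref{lem:angle-holder} to hold on the chart, which uses the local Hessian bound. We also need the correction $\dd\eta$ to have size $O(r)$ on $B_r$. The homotopy formula gives $|\eta|\le Cr^2\|\dd\phi\|$ and $|\dd\eta|\le Cr$, since $\phi-\phi_p=O(|x|)$ with bounded derivative. These bounds depend on the geometry at $p$ and on $K$, but $r_0$ may depend on $p$, which is allowed. If this correction is awkward, the fallback is to compare directly against the constant form $\phi_p$, absorbing $O(r)$ errors in both the comass and the metric.
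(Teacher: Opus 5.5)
Your proposal is correct and follows essentially the paper's argument. You compare $T\llcorner B_r$ with the cone $0\ttimes S_r$ using the almost-calibration $\Rea(e^{-i\beta(p)}\Omega)$, with $\cos(\beta-\beta(p))\ge 1-Cr^{2\alpha}$ from \cref{lem:angle-holder}, together with the cone-mass bound and coarea, and then integrate $\frac{\dd}{\dd r}\log(r^{-n}m)\ge -Cr^{\delta-1}$. Your coarea step, like the cone-mass bound, should carry a harmless $(1+Cr)$ factor from $g\neq g_0$. The only real difference is that you close the form with the cone homotopy operator, $\tilde\phi=\phi-\mathcal{I}(\dd\phi)$, whereas the paper keeps $\Rea\Omega$ and fills $R_r-T\llcorner B_r$ with an $(n+1)$-dimensional cone $W_r$, paying $\langle W_r,\dd\Rea\Omega\rangle=O(r)\bigl(m(r)+\Mass(R_r)\bigr)$; this is just the dual form of the same $O(r)$ error.
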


\begin{proof}
After multiplying $\Omega$ by a constant phase we may assume that $\beta(p) = 0$. Write
\[
  m(r):=\Mass(T\llcorner B_r)
  \qquad \text{and}
  \qquad S_r:=\langle T,|\cdot|,r\rangle\,;
\]
the latter is well-defined and has finite mass for almost-every $r$ by the slicing theory of currents (see e.g. \cite{SimonGMT}). For almost every $r$, let $R_r = 0 \ttimes S_r$ be the cone over $S_r$. An application of the coarea formula with an error term from the ambient metric yields
\begin{equation}\label{eq:cone-mass}
  \Mass(R_r)\le \frac r n(1+Cr)\Mass(S_r).
\end{equation}
The $n$-dimensional cycle $R_r-T\llcorner B_r$ has an $(n+1)$-dimensional cone filling $W_r$ satisfying
\begin{equation}\label{eq:filling-mass}
  \Mass(W_r)\le Cr\bigl(m(r)+\Mass(R_r)\bigr).
\end{equation}
If $0 < \alpha < 1$ denotes the exponent from \cref{lem:angle-holder} applied in a small enough ball around $p$, we have that $\Rea \Omega(\vec{T}(q)) = \cos(\beta(q)) \geq 1 - C \beta(q)^2 \geq 1 - C|q|^{2\alpha}$ for all $q$ near $p$.
Combining this with \eqref{eq:filling-mass}, we obtain
\begin{align*}
  \Mass(R_r)
  &\ge \langle R_r,\Rea\Omega\rangle\notag\\
  &=\langle T\llcorner B_r,\Rea\Omega\rangle
    +\langle W_r,\dd(\Rea\Omega)\rangle\notag\\
  &\ge (1-Cr^{2\alpha})m(r)
    -Cr\bigl(m(r)+\Mass(R_r)\bigr).
\end{align*}
Rerranging and recalling \eqref{eq:cone-mass}, for $r$ small enough we get
\[
  \frac r n\Mass(S_r)\ge (1-Cr^\delta)m(r)
\]
for $\delta := \min\{1, 2\alpha\}$.
On the other hand, the coarea formula gives $m'(r) \ge (1-Cr) \Mass(S_r)$ for almost every $r$. We thus conclude that
\[
  \frac{\dd}{\dd r}\log\bigl(r^{-n}m(r)\bigr)
  \ge-Cr^{\delta-1}\,,
\]
and the statement follows.
\end{proof}

An important consequence of the almost-monotonicity formula in \cref{prop:almost-monotonicity} is that rescalings of a given current induced by a Hamiltonian stationary Lagrangian graph with $C^{1,1}$-potential converge subsequentially to a cone, referred to as a tangent cone.

\begin{proposition}\label{prop:tangent-cones}
Let $u$ be as in \cref{thm:regularity-main}, let $x_0$ be a point in its domain, and let $T$ be the multiplicity-one current $\llbracket \graph(Du) \rrbracket$.
Then every sequence $r_j\downarrow0$ has a subsequence for which the rescaled potentials \eqref{eq:potential-blowup} converge in
$C^{1,\kappa}_{\mathrm{loc}}(\R^n)$, for every $\kappa<1$, to a function
$v\in C^{1,1}_{\mathrm{loc}}(\R^n)$ satisfying
\begin{equation}\label{eq:frozen-tangent-equation}
  \FSL\bigl(x_0,Du(x_0),D^2v\bigr)=\beta(x_0)
\end{equation}
in the viscosity sense. Setting $q_0 = (x_0, Du(x_0))$, the rescalings $T_j:=(\eta_{q_0,r_j})_\sharp T$ converge weakly without loss of mass to the cone $ C = \llbracket \graph(Dv) \rrbracket $, which is calibrated by $\Rea [e^{-i \beta(q_0)} \Omega(q_0)]$ in $(\R^{2n}, g(q_0), J(q_0))$.

Moreover, letting $(\Sigma_u)_{x_0,r_j} = \eta_{x_0,r_j}(\Sigma_u)$, for any sequence $y_j\in (\Sigma_u)_{x_0,r_j}$ with $y_j \to y$, we have that $y \in \Sigma_v$ and $(y, Dv(y)) \in \Sing C$.
\end{proposition}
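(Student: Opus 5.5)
We argue in three stages: compactness of the rescaled potentials, identification of the limiting current as a calibrated cone, and persistence of singular points.

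\textbf{Compactness and the limiting equation.} Fix a Darboux chart around $q_0$ in which $D^2u$ is bounded by $K$. The rescalings $u_j:=u_{x_0,r_j}$ from \eqref{eq:potential-blowup} satisfy $u_j(0)=0$, $Du_j(0)=0$ and $\|D^2u_j\|_{L^\infty}\le K$ on $B_{R/r_j}$. Arzel\`a--Ascoli therefore gives a subsequence converging in $C^{1,\kappa}_{\mathrm{loc}}(\R^n)$ for every $\kappa<1$, with limit $v\in C^{1,1}_{\mathrm{loc}}$ satisfying $\|D^2v\|\le K$. Each $u_j$ is a viscosity solution of $\FSL_{x_0,r_j}(y,Du_j,D^2u_j)=\beta_{x_0,r_j}(y)$, by \cref{lem:ae-viscosity}. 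As $j\to\infty$, the operators $\FSL_{x_0,r_j}$ converge locally uniformly to the frozen operator $\FSL(x_0,Du(x_0),\cdot)$ by smoothness of $\FSL$. The right-hand sides converge locally uniformly to the constant $\beta(x_0)$, since \cref{lem:angle-holder} gives $|\beta_{x_0,r_j}(y)-\beta(x_0)|\le C(r_j|y|)^\alpha$. Stability of viscosity solutions under uniform convergence then yields \eqref{eq:frozen-tangent-equation}. Homogeneity of $v$ is not yet claimed at this stage; it comes from the next step.

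\textbf{Convergence of currents and conicality.} In the chart, $T_j=(\eta_{q_0,r_j})_\sharp T$ is the graph current of $p\mapsto Du_j$ (up to the smooth metric correction, which tends to $g(q_0)$). The graphs have uniformly Lipschitz parametrizations converging in $C^0_{\mathrm{loc}}$, so $T_j\rightharpoonup C:=\llbracket\graph(Dv)\rrbracket$ as currents, with locally bounded masses. For convergence without loss of mass:
\begin{itemize}
\item Lower semicontinuity gives $\Mass(C\llcorner B_\rho)\le\liminf_j \Mass(T_j\llcorner B_\rho)$.
\item For the reverse inequality we use calibration. Since $\Rea[e^{-i\beta(q_0)}\Omega]$ evaluated on $\vec T$ is at least $1-C|q-q_0|^{2\alpha}$ (as in the proof of \cref{prop:almost-monotonicity}), we get $\Mass(T_j\llcorner B_\rho)\le (1+o(1))\,T_j(\chi\Rea[e^{-i\beta(q_0)}\Omega(q_0)])+o(1)$ for a cutoff $\chi$.
\item The right-hand side converges to $C(\chi\,\Rea[\dots])\le\Mass(C\llcorner B_{\rho+})$.
\end{itemize}
This argument also shows that the calibration is attained in the limit, so $C$ is calibrated by the constant form $\Rea[e^{-i\beta(q_0)}\Omega(q_0)]$ in $(\R^{2n},g(q_0),J(q_0))$. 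Alternatively, calibration follows directly from the frozen equation \eqref{eq:frozen-tangent-equation}, which says exactly that the tangent planes of $C$ have constant angle $\beta(q_0)$.

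With mass convergence in hand, \cref{prop:almost-monotonicity} shows that $r^{-n}\Mass(C\llcorner B_r)$ equals the constant $\omega_n\Theta^n(T,q_0)$. The monotonicity identity for the calibrated (hence stationary) current $C$ then forces $C$ to be a cone. Because $C$ is a graph of $Dv$ through the origin, $Dv$ is $1$-homogeneous, so $v$ is $2$-homogeneous. I expect this mass-convergence step to be the main technical obstacle: it requires controlling the metric and angle errors uniformly on compact sets, with care at the boundary spheres (choosing $\rho$ with $\Mass(C\llcorner\partial B_\rho)=0$).

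\textbf{Persistence of singular points.} Given $y_j\in(\Sigma_u)_{x_0,r_j}$ with $y_j\to y$, each $y_j$ is a singular point of $u_j$, since $\Sigma$ is invariant under the rescaling \eqref{eq:potential-blowup}. Apply \cref{lem:singular-persistence} to $u_j\to v$:
\begin{itemize}
\item The operators $\FSL_{x_0,r_j}$ lie in a common compact family $\Oscr_{K+1}$ after translating to unit balls around $y_j$.
\item They converge in $C^{0,\alpha'}_x$ for $\alpha'<\alpha$ to the frozen operator.
\item The right-hand sides $\beta_{x_0,r_j}$ converge in $C^{0,\alpha'}$ to $\beta(x_0)$, by the H\"older bound and interpolation.
\end{itemize}
This gives $y\in\Sigma_v$. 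Finally, if $(y,Dv(y))$ were a regular point of $C$, then near $y$ the smooth sheet $C$ would be a graph of $Dv$ with locally constant multiplicity one. Its tangent plane would be graphical, so $Dv$ would be smooth near $y$ and $u_{y,r}$ (for $v$) would converge to a quadratic, contradicting $y\in\Sigma_v$. Hence $(y,Dv(y))\in\Sing C$.
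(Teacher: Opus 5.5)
Your proposal is correct and follows the paper's proof essentially step by step: Arzel\`a--Ascoli plus stability of viscosity solutions for the frozen equation, weak convergence of the graph currents from uniformly Lipschitz parametrizations, and the bound $1-C|q-q_0|^{2\alpha}$ on the calibration to upgrade to mass convergence and calibration of $C$. The remaining steps also match: constancy of the density ratio makes $C$ a cone, and \cref{lem:singular-persistence} gives the persistence of singular points. Your direct argument for the last step, that a regular point of $C$ would force $Dv$ to be smooth near $y$, replaces the paper's brief appeal to Allard and works. To complete it, note that every tangent vector of a Lipschitz graph lies in the closed cone of secant directions, so the tangent planes there are automatically graphical.
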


In particular, if $x_0 \in \Sigma_u$, by identifying $(\R^{2n}, g(q_0), J(q_0))$ with the standard $\C^n$ and using \cref{lem:lipschitz-gradient-graph}, this procedure yields a singular special Lagrangian tangent cone which is graphical in $\C^n$ in the standard sense.

\begin{proof} 
The uniform Hessian bound for the rescaled potentials gives local compactness of blow-ups, so after passing to a subsequence, $u_{x_0,r_j}\to v$ in $C^{1,\kappa}_{\mathrm{loc}}$ for every $\kappa<1$.
By \cref{lem:angle-holder}, the rescaled angles $\beta_{x_0,r_j}$ converge uniformly to $\beta(x_0)$, while the rescaled operators converge locally to
$A\mapsto\FSL(x_0,Du(x_0),A)$. Classical compactness for viscosity solutions thus yields
\eqref{eq:frozen-tangent-equation}.

Define in addition the rescaled graphing maps
\[
  F_j(y)
  := \left( y, Du_{x_0,r_j}(y)\right),
\]
which are uniformly Lipschitz in $j$ on compact subsets of $\R^n$.
The $C^1_{\mathrm{loc}}$ convergence of the potentials gives
\[
  F_j(y)\longrightarrow F_\infty(y) := (y,Dv(y))
\]
locally uniformly. 

Observe that for any $R>0$, for $j$ sufficiently large we have $T_j\llcorner\Cbf_R = (F_j)_\sharp \llbracket B_R^n\rrbracket$, and $(F_\infty)_\sharp \llbracket B_R^n\rrbracket = C\llcorner \Cbf_R$, where $\Cbf_R$ denotes a cylinder over $B_R^n\subset\R^n \subset \R^{2n}$.
The local uniform convergence together with a linear homotopy argument shows that $T_j \mathbin{\llcorner} B_R^{2n}$ converges in the flat topology, and thus weakly, to $C \mathbin{\llcorner} B_R^{2n}$. We will henceforth consistently work with balls in $\R^{2n}$ and thus omit the superscript.

It is clear that $\Mass(C \llcorner B_R) \leq \liminf_{j \to \infty} \Mass(T_j \llcorner B_R)$.
For the converse inequality, we use the same argument as in the proof of \cref{prop:almost-monotonicity}, based on \cref{lem:angle-holder}. Fix $\epsilon > 0$ and $\chi \in C^\infty(B_{R+\epsilon})$ with $\chi \equiv 1$ on $B_{R}$, and compute
\begin{align*}
    \Mass(C \llcorner B_{R+\epsilon})
    &\geq \langle C, \chi \Rea [e^{-i \beta(q_0)} \Omega] \rangle
    = \lim_{j\to\infty} \langle T_j, \chi \Rea [e^{-i \beta(q_0)} \Omega] \rangle \\
    &\geq \liminf_{j\to\infty} (1 - C (R r_j)^{2\alpha}) \Mass(T_j \llcorner B_{R})
    = \liminf_{j\to\infty} \Mass(T_j \llcorner B_{R})\,.
\end{align*}
The convergence of masses follows after letting $\epsilon \to 0$. A similar argument also establishes that $C$ is calibrated by $\Rea [e^{-i \beta(q_0)} \Omega]$ with respect to the metric $g(q_0)$:
\begin{align*}
    \langle C, \chi \Rea [e^{-i \beta(q_0)} \Omega] \rangle
    &\geq \langle C, \chi \Rea [e^{-i \beta(q_0)} \Omega] \rangle \\
    &= \lim_{j\to\infty} \langle T_j, \chi \Rea [e^{-i \beta(q_0)} \Omega] \rangle \\
    &\geq \liminf_{j\to\infty} (1 - C (R r_j)^{2\alpha}) \Mass(T_j \llcorner B_R)
    \geq \Mass(C \llcorner B_R)\,.
\end{align*}

As a result, $C$ is area-minimizing and
\begin{equation*}\label{eq:rescaled-mass-limit}
  R^{-n} \Mass(C \llcorner B_R)
  = \lim_{j\to\infty} R^{-n}\Mass(T_j\llcorner B_R)
  = \omega_n\Theta^n(T,q_0),
\end{equation*}
which by the equality case in the monotonicity formula for minimal surfaces yields that $C$ is a cone (see e.g. \cite[Chapters 4, 6 \& 7]{SimonGMT} for more details).

Finally, the persistence of singularities is a consequence of \cref{lem:singular-persistence} (the H\"{o}lder convergence in the hypotheses is a direct consequence of Arzel\`{a}-Ascoli): if $y_j\in (\Sigma_u)_{x_0,r_j}$ and $y_j\to y$, the lemma gives directly $y\in\Sigma_v$. The fact that $(y,Dv(y)) \in \Sing C$ follows immediately from Allard's Regularity Theorem and the definition of the latter.
\end{proof}

With the almost-monotonicity of \cref{prop:almost-monotonicity} at hand, we will proceed to perform Federer's dimension reduction argument to conclude the proof of \cref{thm:regularity-main}. To do this, let us first record the following lemma, which tells us that iterated conical blowups remain in our graphical special Lagrangian class.

\begin{lemma}\label{lem:graphical-splitting}
Let $C=\llbracket \graph(Dv)\rrbracket$ be a graphical special Lagrangian cone in $\C^n$ with potential $v \in C^{1,1}(\R^n)$, and let $Y=(y,Dv(y))\in\Sing C\setminus\{0\}$. Any tangent cone $C_Y$ of $C$ at
$Y$ is invariant under translations along $Y$ and splits orthogonally as
\begin{equation}\label{eq:cone-splitting}
  C_Y=\Span \{Y\}\times C_0,
\end{equation}
where, after a unitary identification of
$(\Span\{Y,JY\})^\perp$ with $\C^{n-1}$, the cross-section $C_0$ is a non-flat
multiplicity-one graphical special Lagrangian cone with $C^{1,1}$ potential.
\end{lemma}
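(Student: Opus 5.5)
We first note that $y\neq0$. Indeed $v$ is two-homogeneous, so $Dv$ is one-homogeneous and $Dv(0)=0$; hence $y=0$ would force $Y=0$. Since $C$ is special Lagrangian with constant phase in the flat Calabi--Yau space $\C^n$, \cref{prop:tangent-cones} applies to $u=v$ at $x_0=y$, with $\FSL=F_n$ and $b=0$. Along a subsequence $r_j\downarrow0$, the rescaled potentials $v_{y,r_j}$ from \eqref{eq:potential-blowup} converge in $C^{1,\kappa}_{\mathrm{loc}}$ to some $w\in C^{1,1}_{\mathrm{loc}}(\R^n)$, and the rescaled currents converge without loss of mass to a calibrated cone $C_Y=\llbracket\graph(Dw)\rrbracket$. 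In particular, $C_Y$ is again a graphical special Lagrangian cone with $C^{1,1}$ potential, in the original coordinates, with multiplicity one.

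\emph{Translation invariance.} This is the standard fact that a tangent cone to a cone $C$ at a point $Y\neq0$ is invariant under translations along $Y$. The reason is that $C$ is invariant under the dilations $\eta_{0,\lambda}$. Hence $(\eta_{Y,r})_\sharp C=(\eta_{Y,r})_\sharp(\eta_{0,1+tr})_\sharp C$, and the latter is the translate by $tY$ of a current that is $O(r)$-close to $(\eta_{Y,r})_\sharp C$ on compact sets. Passing to the limit gives $\tau_{tY\sharp}C_Y=C_Y$ for all $t\in\R$. Combined with invariance under dilations about $0$, this gives the usual splitting $C_Y=\Span\{Y\}\times C_0$, where $C_0$ is an area-minimizing cone in $Y^\perp$. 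Since $C_Y$ is Lagrangian and contains the direction $Y$, the vector $JY$ is normal to $C_Y$. Thus $C_0\subset(\Span\{Y,JY\})^\perp\cong\C^{n-1}$, and $C_0$ is Lagrangian there. It is calibrated by the restriction of $\Rea\Omega$ contracted with $Y/|Y|$, after a unitary change of frame sending $Y/|Y|$ to $e_1$. Any phase introduced by that unitary can be absorbed by a constant phase rotation of the remaining $n-1$ coordinates. So $C_0$ is special Lagrangian of multiplicity one, since $C_Y$ has multiplicity one.

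\emph{Non-flatness.} If $C_0$ were flat, then $C_Y$ would be a multiplicity-one plane. Then $\Theta^n(C,Y)=1$, and Allard's regularity theorem would make $C$ smooth near $Y$, contradicting $Y\in\Sing C$.

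\emph{Graphicality of $C_0$ with $C^{1,1}$ potential.} I expect this to be the main obstacle. In the coordinates of the graph, translation invariance says $Dw(z+ty)=Dw(z)+t\,Dv(y)$. Hence $D^2w(z)\,y=Dv(y)$ is constant a.e. Writing $z=z'+s\,y/|y|$ with $z'\perp y$, integration shows that
\[
  w(z)=w_0(z')+s\,a\cdot z'+\tfrac{b}{2}s^2
\]
for a constant vector $a$, a constant $b$, and $w_0\in C^{1,1}(y^\perp)$ two-homogeneous. Subtracting this quadratic correction amounts to applying a linear symplectic shear of $\C^n$. After the shear, $\graph(Dw)$ becomes $\R\,y\times\graph(Dw_0)$, with $\graph(Dw_0)\subset y^\perp\times y^\perp$. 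This exhibits $C_0$ as a Lipschitz Lagrangian graph along the Lagrangian plane $\{0\}\times y^\perp$ with respect to the standard complex structure. \cref{lem:lipschitz-gradient-graph}, applied in $\C^{n-1}$ for the structure induced after the unitary identification, then shows that $C_0$ is a gradient graph in the standard sense. Its potential differs from $w_0$ only by a quadratic, and so remains $C^{1,1}$. The care needed here is in checking that the shear and the unitary identification are compatible, that is, that both preserve $\Lambda=\{0\}\times\R^n$ up to the complement change handled by \cref{lem:lipschitz-gradient-graph}. I would verify this directly in the coordinates adapted to $Y$.
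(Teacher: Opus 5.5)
Your outline matches the paper's proof. You blow up the potentials at $y$ so that $C_Y=\llbracket\graph(Dw)\rrbracket$ is again a $C^{1,1}$ gradient graph, then apply standard cone splitting. You use that $JY$ is normal to $C_Y$ to get $\spt C_0\subset W:=(\Span\{Y,JY\})^\perp$, calibrate $C_0$ by $Y\slot\Rea\Omega$, and get non-flatness from Allard. The paper handles graphicality of $C_0$ in one phrase (``$C_0$ still a Lipschitz graph''), so it is good that you try to prove it, and your ingredients are right. Translation invariance does give $w(z)=w_0(z')+s\,a\cdot z'+\tfrac b2 s^2$. The symplectic shear $\Phi(z,p)=(z,p-Sz)$, with $S$ the constant Hessian of the quadratic correction, sends $Y$ to $(y,0)$ and $C_Y$ to $\R(y,0)\times\graph(Dw_0)$. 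However, the conclusion that this ``exhibits $C_0$ as a Lipschitz Lagrangian graph along $\{0\}\times y^\perp$'' is false as stated. The compatibility you propose to check is also not the relevant one.

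The issue is that $\graph(Dw_0)$ lives in $V:=y^\perp\times y^\perp$, while $C_0$ lives in $W$, and $\Phi$ does not map $W$ onto $V$. For $(z,p)\in W$ one has $z\cdot y=-p\cdot Dv(y)$, which is generally nonzero. Likewise $\{0\}\times y^\perp\not\subset W$ unless $Dv(y)\in\R y$. Since $\Phi$ is not unitary, no unitary identification of $W$ with $\C^{n-1}$ can repair this.

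What closes the step is a linear \emph{symplectic} identification. We have $\Phi(W)\subset(y,0)^\omega=\{p\cdot y=0\}$, and the orthogonal projection $\pi_V$ along $(y,0)$ preserves $\omega$ on $(y,0)^\omega$. So $\Psi:=\pi_V\circ\Phi|_W:W\to V$ is a linear symplectomorphism, since its kernel lies in $\R Y\cap W=\{0\}$. From $C_Y=\R Y+C_0$ one checks $\Psi(C_0)=\graph(Dw_0)$. Hence $C_0$ is a Lipschitz Lagrangian graph along the Lagrangian plane $\Lambda_W:=\Psi^{-1}(\{0\}\times y^\perp)\subset W$. Applying \cref{lem:lipschitz-gradient-graph} in $(W,\omega|_W,J|_W)$ makes it a gradient graph over $J\Lambda_W$ with a $C^{1,1}$ potential. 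Now choose the unitary identification $W\cong\C^{n-1}$ sending $J\Lambda_W$ to $\R^{n-1}$; there is no need to track the relation to $w_0$.

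Finally, drop the remark about absorbing the phase by rotating coordinates. ``Special Lagrangian'' here allows any constant phase, which is all the later rigidity step uses. Such a rotation would in general destroy the gradient-graph representation over $\R^{n-1}$.
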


\begin{proof}
    Up to a rotation, we may assume that $\spt C$ is a graph over $\R^n \cong \R^n \times \{0\} \subset \C^n$.
For a graph, $Y\ne0$ implies $y\ne0$; we may also normalize $Y$ so that $|Y| = 1$. Given any sequence $r_j\downarrow0$ producing
$C_Y$, consider the normalized potentials
\[
  v_{y,r_j}(z)
  :=\frac{v(y+r_jz)-v(y)-r_jDv(y)\cdot z}{r_j^2}.
\]
The uniform Hessian bound gives, after passing to a subsequence,
$v_{y,r_j}\to v_Y$ in $C^{1,\kappa}_{\mathrm{loc}}$ for every $\kappa<1$,
and the corresponding graph currents converge to
\[
  C_Y=\llbracket \graph(Dv_Y) \rrbracket\,.
\]
Thus $C_Y$ is still a global multiplicity-one current induced by the gradient graph of a potential $v_Y \in C^{1,1}(\R^n)$.
Since $Y\neq 0$, standard cone splitting gives translation invariance along $Y$ and the orthogonal product decomposition \eqref{eq:cone-splitting} with $C_0$ still a Lipschitz graph and a cone. In addition, since $C_Y$ is Lagrangian and $Y$ is tangent to $C_Y$ at every point in its support, the function $\langle J Y, q \rangle$ is constant on $\spt C_Y$ and thus $C_Y$ is supported on $(JY)^\perp$. Hence
\[
  \spt C_0 \subset W := (\Span\{Y,JY\})^\perp
\]
and $C_0$ is still Lagrangian in $W$.
Finally, since the tangent cone $C_Y$ is calibrated by the same form $\Rea \Omega$ as $C$, $C_0$ is calibrated by $\pm (Y \slot \Rea \Omega)$, which is also a special Lagrangian calibration on $W \cong \C^{n-1}$. The fact that $C_Y$ is non-flat as follows from Allard's Regularity Theorem, as $C_Y$ has density one almost everywhere and $Y$ is a singular point of $C$.
\end{proof}

We are now in a position to conclude the proof of \cref{thm:regularity-main}. Given the preservation of graphicality from iterated blowups given by \cref{lem:graphical-splitting}, the remaining argument is a standard dimension reduction argument \`{a} la Federer (see for instance \cite[Appendix A]{SimonGMT}), but we nevertheless repeat it here for the benefit of the reader.

\begin{proof}[Proof of \cref{thm:regularity-main}]
The smoothness of $u$ away from $\Sigma_u$ was established in
\cref{sec:eps-reg}. We first recall the low-dimensional rigidity used in
the reduction: every $C^{1,1}$ graphical special Lagrangian cone in $\C^n$ for $n\leq 4$ is a plane. This is a direct consequence of \cite[Lemma 2.1]{Yuan-3d-SL} when $n\leq 3$, and the main result of \cite{NadirashviliVladut2013} extends it to $n \leq 4$ for more general equations.

First of all, note that the works \cite{BW19_2d,Bh25} imply $\Sigma_u=\varnothing$ for $n\le4$ when the ambient space is $\C^n$. In a general almost K\"{a}hler manifold, we invoke \cref{prop:tangent-cones} to obtain a non-flat graphical special Lagrangian tangent cone with $C^{1,1}$ potential at a singular point $x_0\in\Sigma_u$, contradicting the above low-dimensional classification. We may therefore henceforth assume that $n\ge5$. 

Now, suppose for a contradiction that
$\dim_{\cH}\Sigma_u>n-5$. This in particular means that there exists $s>0$ such that
\[
    \Hcal^{n-5+s}_\infty(\Sigma_u) > 0\,,
\]
where we work with the Hausdorff content (see \cref{sec:notation}) in place of Hausdorff measure due to its upper semicontinuity under Hausdorff convergence. This in turn implies that there exist $x_0\in\Sigma_u$,
$c>0$, and radii $r_j\downarrow0$ such that
\[
  \cH^{n-5+s}_\infty\bigl(\Sigma_u\cap B_{r_j}(x_0)\bigr)
  \ge c r_j^{n-5+s}\,.
\]
After passing to a subsequence, the closed sets $(\Sigma_u)_{x_0,r_j} = \eta_{x_0,r_j}(\Sigma_u)$ converge locally in Hausdorff distance to a closed set $\Sigma_{\infty} \subset \R^n$, and the lower content bound passes to the limit, yielding
\[
  \cH^{n-5+s}_\infty(\Sigma_\infty\cap\overline B_1)\ge c\,,
\]
which in particular implies that $\dim_{\cH}\Sigma_\infty\ge n-5+s$.
Setting $q_0 := (x_0, Du(x_0))$, we may apply \cref{prop:tangent-cones} to the rescalings $T_j = (\eta_{q_0, r_j})_\sharp T$ for the canonical multiplicity-one integral current $T$ induced by the graph of $D u$. This gives subsequential local weak-$*$ convergence of $T_j$ to a multiplicity-one conical current $C$ induced by a Lipschitz graph, which is special Lagrangian in the Hermitian space $(\R^{2n}, g(q_0), J(q_0))$ isomorphic to the standard $\C^n$. In addition, $ \Sigma_\infty \subset \pi_{\R^{n} \times \{0\}}(\Sing C)$, so
\[
\dim_\Hcal \Sing C \geq n - 5 + s.
\]

We thus deduce that there exists a point $Y \in \Sing C\setminus \{0\}$, $c_1>0$ and radii $\rho_j \downarrow 0$ with
\begin{equation}\label{e:lower-content-bd-step1}
    \cH^{n-5+s}_\infty\bigl(\Sing C \cap B_{\rho_j}(Y)\bigr)
  \ge c_1 \rho_j^{n-5+s}\,.
\end{equation}
Taking a tangent cone to $C$ at $Y$ along $\rho_j$ and applying \cref{lem:graphical-splitting} yields a non-flat multiplicity one graphical special Lagrangian tangent cone  $ C_{Y} = \Span\{ Y\} \times C_0$ with the cross-section $C_0$ remaining a non-flat multiplicity-one graphical special Lagrangian cone in some $\C^{n-1}$. The lower bound \eqref{e:lower-content-bd-step1} combined with persistence of singularities for multiplicity-one area-minimizing currents yields
\[
    \cH^{n-5+s}_\infty(\Sing C_Y\cap\overline B_1) > 0\,,
    \quad \text{so} \quad
    \cH^{n-6+s}_\infty(\Sing C_0\cap\overline B_1) > 0\,.
\]
Iterating this, we obtain a non-flat multiplicity-one graphical special Lagrangian cone $\widehat C$ in $\C^5$ with $\cH^{s}_\infty(\Sing \widehat C\cap\overline B_1) > 0$. Thus, we can still find $Y\in \Sing \widehat C\setminus \{0\}$ and iterate one final time to obtain a non-flat multiplicity-one graphical special Lagrangian cone $C'$ in $\C^4$ contradicting the low-dimensional rigidity recalled at the beginning of the proof.

It remains to verify that $\Sigma_u$ is discrete when $n=5$. Suppose for a contradiction that there is a sequence of points $x_j \in \Sigma_u$ accumulating to $x_0$, and let $r_j:=2|x_j-x_0|$. After passing to
a subsequence,
\[
  (\Sigma_u)_{x_0,r_j} \ni y_j:=\frac{x_j-x_0}{r_j}\longrightarrow y_\infty\,,
  \qquad |y_\infty|=\frac{1}{2}\,,
\]
and as above, \cref{prop:tangent-cones} yields a limiting conical special Lagrangian current $C = \llbracket \graph(Dw)\rrbracket$ with a singular point $Y=(y_\infty, Dw(y_\infty)) \neq 0$ in $\R^{10}$ with respect to some complex structure compatible with $\omega_0$. Since $Y \neq 0$ and $C$ is a cone, \cref{lem:graphical-splitting} allows us to take a further tangent cone $C_Y = \mathrm{span}\{Y\} \times C_0$ at $Y$ with a singular four-dimensional cross-section $C_0$, reaching the same contradiction.
\end{proof}

\section{Construction of a singular graphical special Lagrangian cone}\label{sec:counterexample}
This section is dedicated to the proof of \cref{thm:counterexample-main}, that is, the construction of a singular special Lagrangian cone which is a graph over $\R^5$. Our solution $u \in C^{1,1}\setminus C^2(\R^5)$ to the zero-phase special Lagrangian equation is built as the $2$-homogeneous extension of a function $f \in C^\infty(\Sbb^4)$ which is constant on the leaves of Cartan's isoparametric foliation by hypersurfaces with three distinct principal curvatures.

Imposing this reduction leads to a second-order ODE for a function $f$ on the interval $[-1, 1]$ which is singular at $\pm 1$; however, an analytic solution exists around $1$ for every value of the initial condition $A = f(1)$ provided that $B = f'(1)$ is suitably chosen. Thanks to the symmetry of the problem, any solution $f_A$ on $[0, 1]$ such that $f_A(0) = 0$ can be oddly reflected to $[-1, 1]$, thus our goal becomes to find $A \in \R$ such that $\Phi(A) := f_A(0) = 0$.

\cref{fig:phase-zero-shooting-zoom} shows an unverified numerical plot of the behavior of $\Phi(A)$ for $0 \leq A < \sqrt{3}/2$; the case $A=0$ corresponds to a plane, whereas a zero appears to be found at $\widehat A \approx 0.669$ and the ODE becomes ill-posed for $A = \sqrt{3}/2$. Further, \cref{fig:plot-f} shows a numerical plot of $f_{\widehat A}$.

We rigorously verify these simulations in \cref{lem:shooting} by using validated numerics. The strategy is to implement a shooting method with interval arithmetic, in order to guarantee that the solution of the ODE exists on an interval $[A_-, A_+]$ and changes sign; see \cref{prop:validated-shooting} below for more details. The rest of the proof, which constitutes the body of this section, consists of algebraic manipulations and analytic arguments without computer involvement.

\begin{figure}[t]
  \centering
  \includegraphics[width=.88\linewidth]{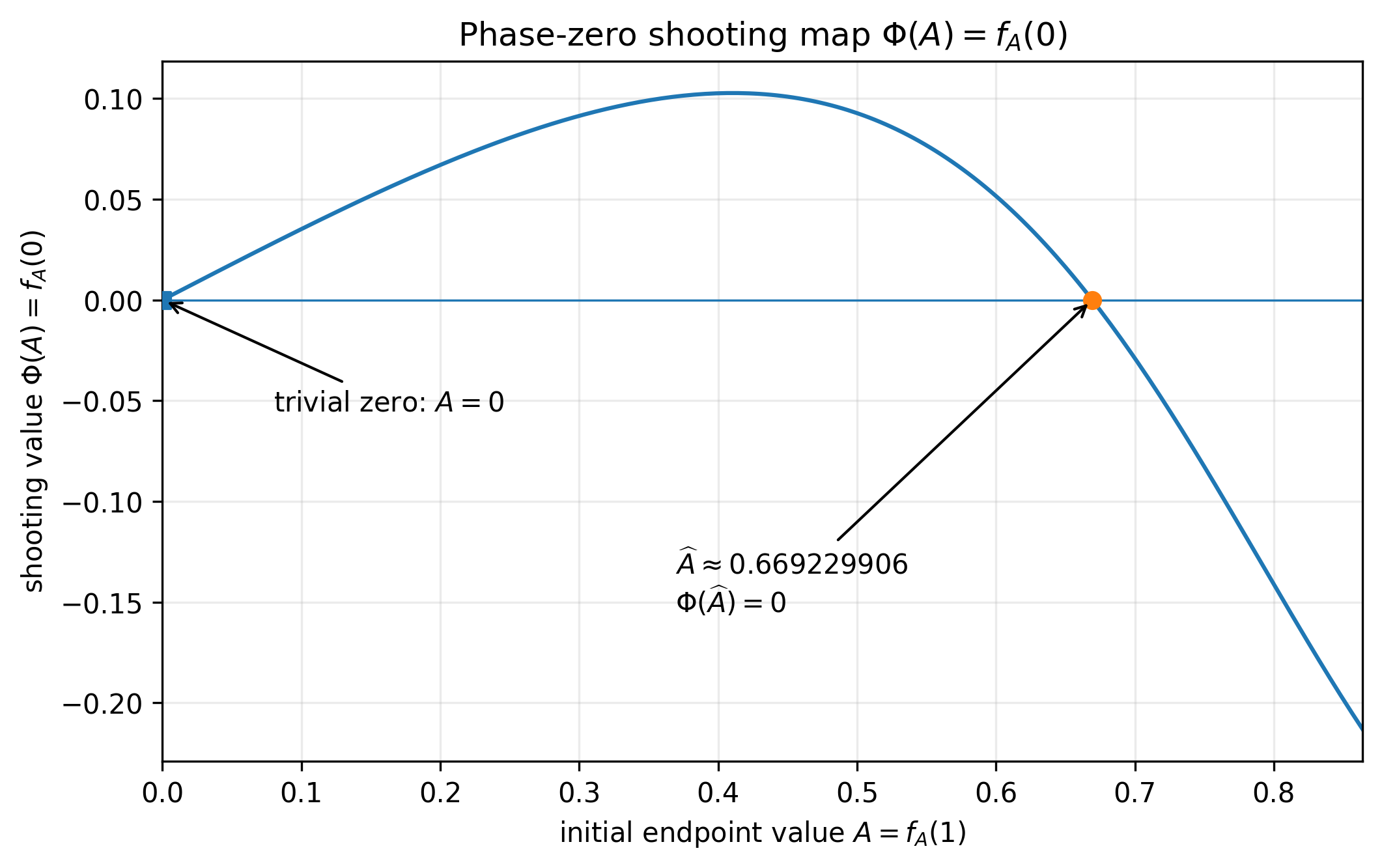}
  \caption{Numerical behavior of the endpoint value $f_A(0)$ for the solution $f_A$ to \eqref{eq:shooting-ODE} with initial data $f_A(1) = A$, $f_A'(1)= B(A)$, with $B(A)$ as in \eqref{eq:B-of-A} and $0 \leq A < \sqrt{3}/2$. This plot is just for illustration and not used in the proof.}
  \label{fig:phase-zero-shooting-zoom}
\end{figure}

\begin{figure}[t]
  \centering
  \includegraphics[width=.88\linewidth]{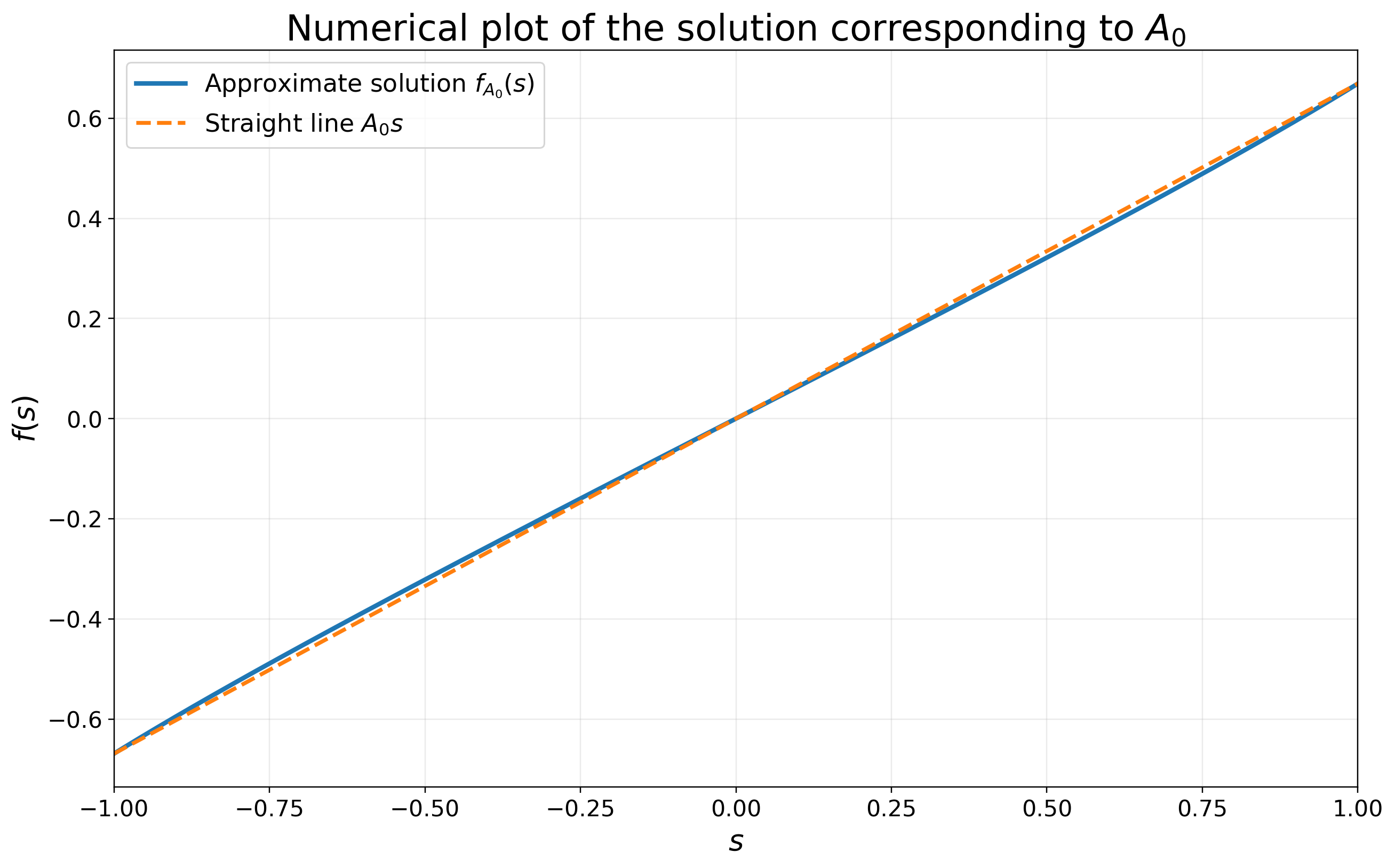}
  \caption{Numerical plot of the solution $f$ on $[-1, 1]$ for the parameter $A_0$ from \eqref{eq:Acenter}. A linear function is added in orange for comparison, given that the plot looks almost linear.}
  \label{fig:plot-f}
\end{figure}

\subsection{Cartan's cubic and orbit reduction}
In 1939, Cartan \cite{Cartan1939} discovered a foliation of $\Sbb^4$ by hypersurfaces with three distinct constant principal curvatures. These arise as the level sets of a cubic polynomial with interesting geometric and analytical properties, which have been used before to construct nontrivial solutions to geometric PDEs (see for example \cite{NadirashviliTkachevVladut2012, FiresterTsiamis26}).
Write
\[
  x=(x_1,x_2,z_1,z_2,z_3)\in\R^5,
  \qquad r=|x|,
\]
and define Cartan's isoparametric cubic
\begin{equation}\label{eq:Cartan-cubic}
  P(x)=x_1^3+\frac{3x_1}{2}
  \bigl(z_1^2+z_2^2-2z_3^2-2x_2^2\bigr)
  +\frac{3\sqrt3}{2}
  \bigl(x_2z_1^2-x_2z_2^2+2z_1z_2z_3\bigr).
\end{equation}
Direct differentiation yields
\begin{equation}\label{eq:Cartan-identities}
  |\nabla P|^2=9r^4,
  \qquad \Delta P=0.
\end{equation}
One obtains a convenient description of $P$ by identifying $\R^5$ with the space of traceless symmetric matrices via the map
\begin{equation*}\label{eq:Cartan-Q}
Q(x)=
\begin{pmatrix}
2x_1&\sqrt3z_1&\sqrt3z_2\\
\sqrt3z_1&-x_1+\sqrt3x_2&\sqrt3z_3\\
\sqrt3z_2&\sqrt3z_3&-x_1-\sqrt3x_2
\end{pmatrix}.
\end{equation*}
By expanding we see that
\begin{equation*}\label{eq:Q-invariants}
  P(x)=\frac12\det Q(x),
  \qquad
  r^2=\frac16\tr(Q(x)^2),
\end{equation*}
and consequently, the conjugation action $Q\mapsto RQR^T$, $R\in \mathrm{SO}(3)$, induces an orthogonal action on $\R^5$ preserving both $r$ and $P$.  Conversely, two traceless symmetric $3\times3$ matrices with the same values of $\tr(Q^2)$ and $\det Q$ have the same characteristic polynomial and therefore are conjugate by a matrix which, multiplying by $-1$ if necessary, we may assume that lies in $\mathrm{SO}(3)$.  Hence the $\mathrm{SO}(3)$-orbits on $\Sbb^4$ are precisely the level sets of $P$.

Let $s(x) := r^{-3} P(x)$ be the zero-homogeneous extension of  $P$ to $\R^5 \setminus \{ 0 \}$. Restricting again to $\Sbb^4$, Euler's identity and \eqref{eq:Cartan-identities} give
\[
  |\nabla_{\Sbb^4}s|^2=9(1-s^2),
  \qquad
  \Delta_{\Sbb^4}s=-18s.
\]
Thus the level sets $M_s$ of $s \neq \pm 1$ form an isoparametric (and in particular CMC) family in $\Sbb^4$. Notice that $s$ is a smooth function on $\Sbb^4$ and its only critical points are along the focal submanifolds $M_{\pm 1}$, which consist each of a real Veronese surface $\mathbb{RP}^2 \subset \Sbb^4$.

Since every symmetric matrix can be orthogonally diagonalized, every orbit meets the two-dimensional linear subspace $\pi:=\{z_1=z_2=z_3=0\}$.  Permutations of the three diagonal entries are realized by conjugation with signed permutation matrices in $\mathrm{SO}(3)$, so a fundamental Weyl chamber is represented by the arc of $\Sbb^4\cap\pi\cong\Sbb^1$ on which
\[
  x_1=\cos\theta,
  \qquad x_2=\sin\theta,
  \qquad 0\le\theta\le\frac\pi3.
\]
Then
\begin{equation*}\label{eq:s-section}
  s=\cos^3\theta-3\cos\theta\sin^2\theta=\cos3\theta,
  \qquad
  \sin3\theta=\sqrt{1-s^2}\ge0.
\end{equation*}
This arc meets every orbit exactly once and $s$ decreases monotonically from $1$ to $-1$ along it.  In particular, the range of $s$ on $\Sbb^4$ is $[-1,1]$.

We seek an $\mathrm{SO}(3)$-invariant $2$-homogeneous potential
\begin{equation}\label{eq:Cartan-ansatz}
  u(x)=r^2f(s(x)),
  \qquad u(0)=0,
\end{equation}
where $f$ is a real-valued function on $[-1,1]$, {smooth up to the boundary (in fact in our construction it will be real analytic)}.

\subsection{Hessian and determinant reduction}\label{ss:Hessian-and-det-reduction}
We now turn to analyzing the special Lagrangian equation
\begin{equation}
\label{eq:special-lagrangian}
\Ima \det (\Id + i D^2 u) = 0
\qquad \text{on }
\R^5 \setminus \{0\}
\end{equation}
for the Ansatz \eqref{eq:Cartan-ansatz}. As $D^2 u$ is $0$-homogeneous, we may restrict to $\Sbb^4$. Moreover, since the $\mathrm{SO}(3)$-action is orthogonal and preserves $u$, the eigenvalues of $D^2u$ are constant along each orbit, thus it suffices to compute them at a point $(\cos \theta, \sin \theta, 0, 0, 0)$ with $\theta \in [0, \pi/3]$. At this point, we will use the adapted orthonormal basis
\[
\begin{split}
  e_r=(\cos\theta,\sin\theta,0,0,0),
  \qquad e_\theta=(-\sin\theta,\cos\theta,0,0,0) \\
  e_1 = (0, 0, 1, 0, 0), \qquad
  e_2 = (0, 0, 0, 1, 0), \qquad
  e_3 = (0, 0, 0, 0, 1)
\end{split}
\]
and note that $\nabla s=-3\sin3\theta\,e_\theta$, as one can compute directly. We also have
\begin{align*}
  \nabla s&=\nabla P-3P r^{-5} x,\\
  D^2s&=D^2P-3(x\otimes\nabla P+\nabla P\otimes x)
       -3s\,\Id+15s\,x\otimes x
\end{align*}
on $\Sbb^4$; using the explicit expression \eqref{eq:Cartan-cubic}, we get
\[
D^2 P =
\begin{pmatrix}
    6 \cos 3\theta & -6 \sin 3 \theta & 0 & 0 & 0 \\
    -6 \sin 3 \theta & -6 \cos 3 \theta & 0 & 0 & 0 \\
    0 & 0 & 3 (\cos \theta + \sqrt{3} \sin \theta) & 0 & 0 \\
    0 & 0 & 0 & 3 (\cos \theta - \sqrt{3} \sin \theta) & 0 \\
    0 & 0 & 0 & 0 & -6 \cos \theta
\end{pmatrix}
.
\]
Thus
\begin{equation}
\label{eq:hessian-s}
D^2 s =
\begin{pmatrix}
    0 & 3 \sin 3 \theta & 0 & 0 & 0 \\
    3 \sin 3 \theta & -9 \cos 3 \theta & 0 & 0 & 0 \\
    0 & 0 & \mu_1 & 0 & 0 \\
    0 & 0 & 0 & \mu_2 & 0 \\
    0 & 0 & 0 & 0 & \mu_3
\end{pmatrix}
,
\end{equation}
where we have introduced
\begin{equation*}\label{eq:mu-explicit}
  \mu_1 := 3(\cos\theta+\sqrt3\sin\theta-s),
  \quad
  \mu_2 := 3(\cos\theta-\sqrt3\sin\theta-s),
  \quad
  \mu_3 := -3(2\cos\theta+s).
\end{equation*}
Although we will not use this, we remark that $\operatorname{diag}(\mu_1, \mu_2, \mu_3) = D^2 s |_{T M_s} = -A_{M_s} \cdot \nabla s = -|\nabla s| \operatorname{diag}(\kappa_1, \kappa_2, \kappa_3)$, where $\kappa_\ell = \cot(\theta - \ell \pi / 3)$, $\ell = 1,2,3$ are the principal curvatures of $M_s$.
Differentiating $u = r^2 (f \circ s)$ and substituting into \eqref{eq:hessian-s} gives
\begin{equation*}
\label{eq:Hessian-block}
\begin{aligned}
D^2u
&=2f\Id+2f'(x\otimes\nabla s+\nabla s\otimes x)
+f''\,\nabla s\otimes\nabla s + f' D^2s \\
&=
\begin{pmatrix}
2f&-3f'\sin3\theta\\
-3f'\sin3\theta&2f+9(1-s^2)f''-9sf'
\end{pmatrix}
\oplus\operatorname{diag}(2f+f'\mu_1,2f+f'\mu_2,2f+f'\mu_3).
\end{aligned}
\end{equation*}
Next we compute $\det (\Id + i D^2 u)$; for that, it is useful to first obtain the elementary symmetric functions of $\mu_1,\mu_2,\mu_3$, which turn out to be polynomials in $s$:
\begin{equation*}\label{eq:mu-symmetric}
  \sum_{j=1}^3\mu_j=-9s,
  \qquad
  \sum_{k<j}\mu_k\mu_j=-27(1-s^2),
  \qquad
  \mu_1\mu_2\mu_3=27s(1-s^2).
\end{equation*}
Introducing $K:=1+2if$, the determinant of the first $2\times2$ block of $\Id+iD^2u$ is
\[
  K^2+(-3f'\sin3\theta)^2
  +9iK\bigl((1-s^2)f''-sf'\bigr),
\]
whereas the determinant of the $3\times3$ block along the directions tangent to $M_s$ is
\begin{equation*}
\label{eq:T-product}
\begin{aligned}
  T &:=\prod_{j=1}^3(K+if'\mu_j)
    = K^3 + K^2 if' \sum_{j} \mu_j - K (f')^2 \sum_{j<k} \mu_j \mu_k - i (f')^3 \mu_1\mu_2\mu_3 \\
    &= K^3 + 27 K (f')^2 (1-s^2) - 9i (K^2 f' s + 3 (f')^3 s(1-s^2)).
\end{aligned}
\end{equation*}
Therefore, using $\sin^2 (3\theta) = 1-s^2$, we see that $\det(\Id + i D^2 u)$ is a polynomial in $s, f, f'$ and $f''$:
\begin{equation}\label{eq:det-reduction}
  \det(\Id+iD^2u)
  =T\left[K^2+9(f')^2 (1-s^2) +9iK\bigl((1-s^2)f''-sf'\bigr)\right].
\end{equation}
Next we will proceed to impose \eqref{eq:special-lagrangian}, and isolate $f''$. With this in mind, we introduce the following polynomial expressions on $s, f$ and $f'$:
\begin{equation}\label{eq:DN-def}
  D := \Rea(KT),
  \qquad
  N := \Ima\bigl((K^2+9(1-s^2)(f')^2)T\bigr),
\end{equation}
so that the imaginary part of \eqref{eq:det-reduction} is
\begin{equation}\label{eq:imag-det-reduced}
  \Ima\det(\Id+iD^2u)
  =N+9\bigl((1-s^2)f''-s f'\bigr)D.
\end{equation}
Thus wherever $D\ne0$ and $|s|<1$, the special Lagrangian equation \eqref{eq:special-lagrangian} yields
\begin{equation}\label{eq:shooting-ODE}
  f''=\frac{9sf'-N/D}{9(1-s^2)}.
\end{equation}

Explicit formulas for $D(s, f, y)$ and $N(s, f, y)$, where we are substituting $y=f'$, are recorded in \cref{app:polynomials}.  Note, in particular, that these are polynomials with integer coefficients.

\subsection{\texorpdfstring{Local existence and uniqueness near $s=1$}{Local existence and uniqueness near s=1}} \label{local_existence}
Now we verify that the ODE \eqref{eq:shooting-ODE} admits a unique solution in the analytic class locally around $s=1$, for suitable boundary data $A = f(1)$ and $B = f'(1)$. This local existence and uniqueness will be the starting point for the shooting procedure which we will subsequently use to obtain a global solution.

Since the denominator of \eqref{eq:shooting-ODE} vanishes at $s = 1$, we must at least impose that the numerator vanishes there too, or equivalently, that \eqref{eq:imag-det-reduced} is zero for $s = 1$, to avoid singular behavior at this endpoint. At $s = 1$, we can compute explicitly $K = 1+2iA$ and $T = (1+2iA)^2 (1+2iA-9iB)$, which in turn implies that \eqref{eq:det-reduction} simplifies to 
\begin{equation}\label{eq:Hessian-det-s=1}
    \det(\Id + iD^2 u) = (1+2iA)^3(1+2iA-9iB)^2\,.
\end{equation}
Requiring this quantity to be real is in turn implied by the argument condition $\arg(KT) + \arg(1+2iA-9iB) =0$, which amounts to
\begin{equation}
    \label{eq:argument-at-start}
    3 \arctan (2A) + 2 \arctan(2A - 9B) = 0\,.
\end{equation}
Below we fix an interval $[A_-, A_+] \subset (-\sqrt{3}/2, \sqrt{3}/2)$ (see \eqref{eq:Aminus}-\eqref{eq:Aplus}), in which we will proceed to obtain existence of an analytic solution. In such an interval, \eqref{eq:argument-at-start} can be solved analytically by
\begin{equation}\label{eq:B-of-A}
  B=B(A):=\frac{2A+\tan\!\left(\frac32\arctan(2A)\right)}9\,.
\end{equation}

We next verify that
\begin{equation}\label{eq:D-endpoint-positive}
  D(1,A,B(A))>0\,,
\end{equation}
which will ensure that the reduced equation for $f''$ in \eqref{eq:shooting-ODE} does not have further poles at $(1, A, B(A))$. By the expressions for $K$ and $T$ found above, we have $KT = (1+2iA)^3 (1+2iA-9iB)$. Thanks to \eqref{eq:argument-at-start} and $|2A| < \sqrt{3}$, the argument modulo $2 \pi \Z$ of $KT$ is therefore
\[
  3 \arctan(2A) + \arctan(2A-9B)
  = \frac{3}{2} \arctan(2A)
  \in \left( -\frac{\pi}{2}, \frac{\pi}{2} \right)\,,
\]
so \eqref{eq:D-endpoint-positive} is indeed true.

Next, let us rewrite the ODE in the form of a Briot-Bouquet type system. Set $y(s) = f'(s)$, $t=1-s$ and use a dot for differentiation with respect to $t$. Equation \eqref{eq:shooting-ODE} then becomes the system
\begin{equation}\label{eq:BB-system}
  t \Dot{f}(1-t)=-ty,
  \qquad
  t\Dot{y}(1-t)=\mathcal G(t,f(1-t),y(1-t)),
\end{equation}
where
\begin{equation}\label{eq:G-definition}
  \mathcal G(t,f(1-t),y(1-t))
  :=\frac{N(1-t,f,y)-9(1-t)yD(1-t,f,y)}
  {9(2-t)D(1-t,f,y)}.
\end{equation}
We will henceforth simply write $(t,f,y)$, suppressing the implicit fact that we are evaluating $(f,y)$ at $1-t$. At $(t,f,y)=(0,A,B(A))$, \eqref{eq:argument-at-start} guarantees that the numerator in \eqref{eq:G-definition} vanishes and the denominator does not by \eqref{eq:D-endpoint-positive}, so $\mathcal G=0$.  To compute its linearization, observe that at $t=0$, the identity \eqref{eq:det-reduction} simplifies to \eqref{eq:Hessian-det-s=1}, so by \eqref{eq:imag-det-reduced} the numerator of $\mathcal{G}$ becomes
\begin{equation}
\label{eq:numerator-G}
  N(1,f,y)-9yD(1,f,y)
  =\Ima \left[ (1+2if)^3(1+2if-9iy)^2 \right].
\end{equation}
Differentiating with respect to $y$ and using that \eqref{eq:numerator-G} vanishes at $(t, f, y) = (0, A, B(A))$, together with \eqref{eq:DN-def}, gives
\begin{equation}
\label{eq:G-y-minus-one}
\begin{split}
  \partial_y \mathcal{G}(0, A, B(A))
  &= \frac{-18 \Rea \left[ (1+2iA)^3 (1+2iA-9iB(A)) \right]}{9(2-0)D(1, A, B(A))} \\
  &= -\frac{18 \Rea KT}{18 D(1, A, B(A))}
  = -1\,.
\end{split}
\end{equation}
{Note that we do not specifically require the right-hand side of \eqref{eq:G-y-minus-one} to be $-1$; any value (which is allowed to depend on $A$) that yields a suitable bound on $M_y$ in Subsection \ref{ss:ODE-shooting} would suffice. This particularly clean value is merely a consequence of the symmetry of our ansatz.

Writing $Z=(f,y)$ and $\Phi(t,Z)=(-ty,\mathcal G(t,f,y))$, the Jacobian
$J_A:=D_Z\Phi(0,A,B(A))$ is
\begin{equation*}
  J_A=
  \begin{pmatrix}
  0&0\\
  \mathcal \partial_f \mathcal{G}(0,A,B(A))&-1
  \end{pmatrix};
\end{equation*}
in particular, its eigenvalues are $0$ and $-1$.

Our key local analytic existence result near $t=0$ is the following.
\begin{lemma}[Local analytic existence]\label{lem:endpoint-family}
Let $I\Subset(-\sqrt3/2,\sqrt3/2)$ be a compact interval. Then there exists $\eps > 0$ such that for every $A \in I$, the ODE \eqref{eq:BB-system} has a unique real-analytic solution over $t\in [0,\eps]$ satisfying
\[
  {(f, y)|_{t = 0} = (A, B(A)).}
\]
Moreover, the solution and all of its Taylor coefficients depend real-analytically on $A$.
\end{lemma}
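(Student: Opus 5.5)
We will treat \eqref{eq:BB-system} as a Briot--Bouquet singular system $t\dot Z=\Phi(t,Z)$ and construct the solution as a convergent power series, in the spirit of the classical Briot--Bouquet theorem. First we note that $\Phi$ is real-analytic near $(0,A,B(A))$ uniformly for $A\in I$. The components $-ty$ and $\mathcal G$ are rational in $(t,f,y)$ with integer-coefficient polynomials $N,D$, and the denominator $9(2-t)D$ is nonzero at $t=0$ by \eqref{eq:D-endpoint-positive}. Since $I$ is compact and $A\mapsto(A,B(A))$ is continuous, with $B$ analytic on $(-\sqrt3/2,\sqrt3/2)$ by \eqref{eq:B-of-A}, we get a uniform complex polydisc of radius $\rho>0$ around each $(0,A,B(A))$, even for complex $A$ in a neighbourhood of $I$, on which $|D|\ge c>0$ and $\Phi$ is holomorphic with a uniform bound $M$.

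Next we solve formally. We write $Z(t)=Z_0+\sum_{k\ge1}Z_kt^k$ with $Z_0=(A,B(A))$, which gives $\Phi(0,Z_0)=0$. Comparing coefficients of $t^k$ gives
\[
(k\,\Id - J_A)Z_k = R_k(Z_0,\dots,Z_{k-1}),
\]
where $R_k$ is a polynomial in the lower coefficients and in the Taylor coefficients of $\Phi$ at $(0,Z_0)$. The eigenvalues of $J_A$ are $0$ and $-1$, so $k\Id-J_A$ has eigenvalues $k$ and $k+1$. It is therefore invertible for every $k\ge1$, with $\|(k\Id-J_A)^{-1}\|\le C/k$ uniformly in $A\in I$, since $\partial_f\mathcal G(0,A,B(A))$ is bounded. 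This gives existence and uniqueness of the formal series, and each $Z_k$ is analytic (indeed rational) in the Taylor data of $\Phi$, hence analytic in $A$.

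The main obstacle is convergence, which we handle by the standard majorant argument. Set $\Phi_1(t,W):=\Phi(t,Z_0+W)-J_AW$, which has no linear term in $W$ and no constant term. By Cauchy estimates $\Phi_1$ is majorized by $\frac{M}{1-(t+|W|)/\rho}-M-\frac{M}{\rho}|W|$, up to a constant. Using $\|(k-J_A)^{-1}\|\le C$ (rather than $C/k$), the norms $|Z_k|$ are dominated by the coefficients $w_k$ of the solution of the scalar analytic equation
\[
w = C\bigl(\psi(t,w)\bigr),\qquad \psi(t,w):=\tfrac{M}{1-(t+w)/\rho}-M-\tfrac{M}{\rho}w,
\]
with $w(0)=0$. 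This equation is solvable by the implicit function theorem, since $\partial_w\bigl(w-C\psi\bigr)(0,0)=1$. Its solution has positive radius $\eps$ depending only on $C,M,\rho$, hence uniform over $A\in I$ and over a complex neighbourhood of $I$. So the series converges on $|t|<\eps$. Because it converges uniformly in complex $A$ and each coefficient is holomorphic in $A$, the solution is real-analytic jointly in $(t,A)$.

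Uniqueness holds within analytic solutions, since any analytic solution has a Taylor series satisfying the same recursion, and the recursion determines the series uniquely. This is exactly the uniqueness class the statement asserts. We do not need to address non-analytic solutions, which can exist along the center direction associated with the eigenvalue $-1$.
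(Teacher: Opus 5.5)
Your proof is correct, but it gets convergence by a different mechanism from the paper's. The skeleton is the same in both. You shift by $Z_0=(A,B(A))$ and write the system as $t\dot W-J_AW=\Psi_A(t,W)$ with $\Psi_A(0,0)=0$ and $D_W\Psi_A(0,0)=0$. You then use that $(k\Id-J_A)^{-1}$ is bounded uniformly in $k\ge1$ and $A$, because $J_A$ has spectrum $\{0,-1\}$.

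From there the paper runs a contraction for $W\mapsto\mathcal L_A^{-1}\Psi_A(\cdot,W)$ on a small ball of the weighted space $X_\rho$ of power series, with norm $\sum_{k\ge1}|W_k|\rho^k$. This is driven by the Cauchy estimate $\|\Psi_A(\cdot,W)\|_\rho\le C(\rho+\|W\|_\rho^2)$ and a matching Lipschitz bound. You instead solve the coefficient recursion formally and dominate it by the scalar comparison equation $w=C\psi(t,w)$, which you solve with the analytic implicit function theorem.

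Your version is the classical Briot--Bouquet majorant argument. It separates the algebra (formal existence and uniqueness, analytic dependence of each $Z_k$ on $A$) from the convergence. It also makes uniqueness among analytic solutions immediate from the recursion, whereas the paper has to argue that a competing analytic solution falls into the contraction ball after shrinking $\rho$. The paper's version extracts existence, local uniqueness and holomorphic dependence on complexified $A$ from a single estimate, the last as a uniform limit of holomorphic Picard iterates. It is also the same kind of fixed-point scheme the paper later runs in $C^0([0,h_0])$ for the validated numerics.

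Your closing remark should be corrected, although it does not affect the lemma. The eigenvalue $-1$ is not a center direction: its homogeneous solutions behave like $t^{-1}$ and blow up as $t\to0^+$. In fact uniqueness holds even among continuous solutions, because the first equation is the regular one $\dot f=-y$. For two continuous solutions with the same data:
\begin{itemize}
\item the $f$-equation gives $|\delta f(t)|\le t\sup_{[0,t]}|\delta y|$;
\item the $y$-equation gives $(t\,\delta y)'=(b+1)\,\delta y+a\,\delta f$, with averaged coefficients $a,b$ and $b(t)\to\partial_y\mathcal G(0,A,B(A))=-1$;
\item integrating from $0$ yields $\sup_{[0,t]}|\delta y|\le o(1)\sup_{[0,t]}|\delta y|$, so $\delta y\equiv0$ for small $t$.
\end{itemize}
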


\begin{proof}
We seek a solution $Z(t)=(f(t),y(t))=(A,B(A))+W(t)$ that is a perturbation of our initial condition, where $W$ will be determined via an analytic fixed point argument.  By \eqref{eq:D-endpoint-positive}, the map $\Phi$ is real analytic in a neighborhood of
$\{(0,A,B(A)):A\in I\}$.  The system \eqref{eq:BB-system} turns into the following ODE for $W$:
\begin{equation}\label{eq:BB-shifted}
  t\dot W-J_AW=\Psi_A(t,W)\,,
\end{equation}
where $\Psi_A(t,W):=\Phi(t,(A,B(A))+W)-J_AW$, with the initial conditions $\Psi_A(0,0)=0$ and $ D_W\Psi_A(0,0)=0$.

For $\rho>0$, let $X_\rho$ be the Banach space of vector-valued power series
$W(t)=\sum_{k\ge1}W_kt^k$ defined over the $t$-interval $[0,\rho]$, equipped with norm
\[
  \|W\|_\rho:=\sum_{k\ge1}|W_k|\rho^k.
\]
On series $\sum_{k\geq 1} H_k t^k$ with zero constant term, the linear operator
$\mathcal L_AW := t\dot W-J_AW$ has the formal inverse
\[
  \mathcal L_A^{-1}\!\left(\sum_{k\ge1}H_kt^k\right)
  =\sum_{k\ge1}(k\Id-J_A)^{-1}H_k t^k,
\]
which is well-defined because no positive integer is an eigenvalue of $J_A$. Since $\partial_f \mathcal{G}(0,A,B(A))$ is uniformly bounded on $I$, we have
\[
  M:=\sup_{A\in I}\sup_{k\ge1}
  \left|(k\Id-J_A)^{-1}\right|<\infty
\]
in any matrix norm. Thus $\|\mathcal L_A^{-1}\|_{X_\rho\to X_\rho}\le M$, uniformly in $A$.

After complexifying the analytic functions, Cauchy estimates and the initial conditions on $\Psi_A$ give a constant $C$, independent of $A\in I$, such that for $W,\widetilde W$ in a sufficiently small ball within $X_\rho$,
\begin{align*}
  \|\Psi_A(\,\cdot\,,W)\|_\rho
  &\le C\bigl(\rho+\|W\|_\rho^2\bigr),\\
  \|\Psi_A(\,\cdot\,,W)-
     \Psi_A(\,\cdot\,,\widetilde W)\|_\rho
  &\le C\bigl(\rho+\|W\|_\rho+\|\widetilde W\|_\rho\bigr)
       \|W-\widetilde W\|_\rho.
\end{align*}
Setting $R=2MC\rho$ and choosing $\rho$ small enough, we deduce that the map
\[
  W\longmapsto\mathcal L_A^{-1}
  \Psi_A(\,\cdot\,,W)
\]
maps the closed radius $R$ ball of $X_\rho$ into itself and is a contraction there, uniformly for $A\in I$.  Its unique fixed point is a convergent power series solving \eqref{eq:BB-shifted}. Noting that everything is real-valued (the complexification was merely for the above estimates), this gives the claimed real-analytic solution on $t\in [0, \varepsilon]$, equivalently, $s\in [1-\eps, 1]$, with $\eps$ taken to be the upper threshold on $\rho$ obtained above.

The same contraction works after complexifying the initial data $A$ in small balls.  Picard iteration therefore shows that the fixed point is a holomorphic function in $A$; equivalently, comparison of coefficients gives
\[
  (k\Id-J_A)W_k=R_k(W_1,\ldots,W_{k-1};A),
  \qquad k\ge1\,,
\]
and determines each coefficient $W_k$ recursively from the preceding ones with real-analytic dependence.  Any other real-analytic solution with the same initial value belongs to the contraction ball after decreasing $\rho$, and hence coincides with this fixed point. This proves uniqueness and real-analytic parameter dependence.
\end{proof}

\begin{remark}\label{r:recursive-analytic-coeffs}
For future reference, if we write the local expansions
\begin{equation}
\label{eq:analytic-expansions-y-f}
  y(t)=B+\sum_{k\ge1}b_k t^k\,,
  \qquad
  f(t)=A-\sum_{k\ge1}\frac{b_{k-1}}k t^k\,,
\end{equation}
near $(t, f, y)=(0,A,B(A))$, then the order $k$ coefficient equation has the form
$(k+1)b_k=R_k(b_0,\ldots,b_{k-1};A)$. This will be used shortly to find local Taylor polynomial approximations for $(f,y)$ near the endpoint $t=0$, which will be the starting point for the shooting argument.
\end{remark}

\subsection{\texorpdfstring{Global solution on $[0,1]$ and endpoint signs}{Global solution on [0,1] and endpoint signs}} \label{global_sol}
We now explain the computer-assisted argument to extend the local existence of Lemma \ref{lem:endpoint-family} to global existence on the $s$ parameter interval $[0,1]$ and control the sign of $f_{A_\pm}(0)$. The narrow initial data interval {$[A_-,A_+]= [A_0-10^{-26},A_0+10^{-26}]$} below was chosen a posteriori to obtain a high-precision numerical approximation $A_0$ of a zero of the shooting map $A\mapsto f_A(0)$. We will prove that an analytic solution exists for $A \in [A_-, A_+]$ and $s \in [0, 1]$, with opposite signs at $s = 0$ when $A = A_\pm$, but any initial data interval with these properties would suffice for the global existence argument.

Define the parameters
\begin{align}
A_-&:=0.66922990609204402834133930,\label{eq:Aminus}\\
A_0&:=0.66922990609204402834133931, \label{eq:Acenter} \\
A_+&:=0.66922990609204402834133932.\label{eq:Aplus}
\end{align}

\begin{proposition}[Global existence]\label{prop:validated-shooting}
For every $A\in[A_-,A_+]$, the local analytic solution from \cref{lem:endpoint-family} extends uniquely to $0\le t\le1$, and the solutions associated to the endpoints satisfy
\begin{align}
    f_{A_-}(0) &\in
 \bigl[7.27084429, 7.27084453\bigr] 
 \times10^{-27},\label{eq:positive-shot}\\
 f_{A_+}(0) &\in
 \bigl[-1.04396392, -1.04396389\bigr]
 \times10^{-26}.\label{eq:negative-shot}
\end{align}
In particular, the two values have opposite signs.
\end{proposition}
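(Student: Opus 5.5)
The plan is a validated-numerics shooting argument, split into three stages: an endpoint enclosure near $t=0$, a rigorous integration from $t=\eps$ to $t=1$ (i.e.\ from $s=1-\eps$ to $s=0$), and a final evaluation of $f$ at $s=0$. Throughout I work with interval arithmetic carrying the whole parameter interval $[A_-,A_+]$ as an interval input. I also carry the two endpoint values $A_\pm$ separately as point intervals, since the width $2\cdot 10^{-26}$ is far larger than the required sign resolution only in relative terms.

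\textbf{Stage 1: enclosure near the singular endpoint.} Using the recursion of \cref{r:recursive-analytic-coeffs}, $(k+1)b_k=R_k(b_0,\dots,b_{k-1};A)$, I compute with rational/interval arithmetic the Taylor coefficients $b_0=B(A),b_1,\dots,b_m$ up to some order $m$ (say $m\approx 40$). I then make the contraction in the proof of \cref{lem:endpoint-family} quantitative for a concrete radius $\rho$ (e.g.\ $\rho=1/10$):
\begin{itemize}
\item Bound $M=\sup_k|(k\Id-J_A)^{-1}|$ explicitly; since $J_A$ has eigenvalues $0,-1$, this is harmless.
\item Bound the Cauchy constant $C$ by enclosing $\Phi$ on a complex polydisc around $\{(0,A,B(A))\}$. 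Here the needed input is a rigorous lower bound for $|D|$ there, which is an interval evaluation of the integer polynomial $D$ from \cref{app:polynomials}.
\end{itemize}
Applying the contraction to the remainder $W-W^{(m)}$, where $W^{(m)}$ is the Taylor polynomial, yields a rigorous enclosure of $(f,y)$ at $t=\rho$ with error $O(\rho^{m+1})$ tiny.

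\textbf{Stage 2: rigorous integration on $[\rho,1]$.} On this range the ODE \eqref{eq:shooting-ODE} is regular provided $D\neq 0$ and $|s|<1$. I integrate with a validated Taylor-model method (interval Taylor series with Lohner-type remainder, or a CAPD/Arb-style integrator) in many small steps. At each step I:
\begin{itemize}
\item verify an a priori enclosure via the Picard operator on the step box, which proves existence and uniqueness on that step;
\item check that the interval evaluation of $D(s,f,y)$ over the box excludes $0$, so the solution cannot hit a pole;
\item propagate the enclosure to the next step.
\end{itemize}
Continuation through all steps gives extension of the analytic solution to $t\in[0,1]$ for every $A\in[A_-,A_+]$. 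Uniqueness follows by combining \cref{lem:endpoint-family} with standard Cauchy–Lipschitz on $t\ge\rho$.

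\textbf{Stage 3: sign at $s=0$.} At $t=1$ I read off the enclosures of $f_{A_\pm}(0)$. These must have widths well below $10^{-27}$ to certify \eqref{eq:positive-shot} and \eqref{eq:negative-shot}. This forces multiprecision intervals (roughly $60$–$80$ decimal digits) and high-order Taylor models to avoid wrapping-effect blowup.

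The main obstacle I expect is precision management in Stages 1 and 2. The target values are of size $10^{-26}$, so every rounding error, the Taylor truncation at $t=\rho$, and the accumulated wrapping over $[\rho,1]$ must be controlled to about $10^{-35}$. I would attempt this first by:
\begin{itemize}
\item propagating the solution with a linearized (variational) enclosure in a QR-stabilized coordinate frame;
\item using point initial data $A_\pm$ separately for the sign computation, and the full interval only for the existence claim, which needs much coarser accuracy;
\item verifying $D\neq0$ with margin, noting that near $s=0$ nothing degenerates.
\end{itemize}
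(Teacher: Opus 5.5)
Your plan is correct in outline and has the same architecture as the paper's proof. It uses a Taylor polynomial plus a fixed point near the singular endpoint, rigorous stepwise propagation on the regular part, and three runs: the point data $A_-$ and $A_+$ give the signs, and the full interval $[A_-,A_+]$ is used only for existence. What differs are the validation tools in each stage.

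\textbf{Near $t=0$.} The paper does not make the power-series contraction of \cref{lem:endpoint-family} quantitative. Instead it writes $t\dot y=\mathcal G$ as $\frac{\dd}{\dd t}(ty)=\mathcal G+y$. It then finds $v=y-p_y$ as a fixed point in $C^0([0,h_0])$, with $h_0=0.01$, of the map $v\mapsto\frac1t\int_0^t[\mathcal G(\tau,p_f+u,p_y+v)-\mathcal G(\tau,p_f,p_y)+v-\mathcal R_0]\,\dd\tau$.

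Because $\partial_y\mathcal G(0,A,B(A))=-1$, the added $v$ cancels the leading linear part. The Lipschitz constant is then $h_0M_f+M_y\le0.1915$, and only real interval enclosures of $\partial_f\mathcal G$ and $\partial_y\mathcal G+1$ on a real box are needed. The price is a separate step identifying this $C^0$ solution with the analytic one: uniqueness of the fixed point, plus the fact that $\mathcal G(0,A,y(0))=0$ forces $y(0)=B(A)$ within the $V$-neighborhood of $B_c$.

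Your weighted-$\ell^1$ contraction gives an analytic solution directly, so that identification is automatic, and on the tail $(k\Id-J_A)^{-1}$ is $O(1/k)$. In exchange, you need enclosures of $\Phi$ on a complex polydisc, in particular a lower bound on $|D|$ there, together with composition estimates. Also, whether $\rho=1/10$ with $m\approx 40$ reaches $10^{-35}$ depends on the unknown radius of convergence of the solution at $t=0$; a smaller $\rho$, like the paper's $h_0=0.01$, is the safe choice.

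\textbf{On $[h_0,1]$.} The paper avoids Lohner/QR machinery entirely. It uses steps $h_i=\min\{t_i/4,\overline h,1-t_i\}$. On each step it bounds the defect $\rho_i$ of a local Taylor polynomial and a Lipschitz constant $L_i$ of the vector field on a tube of radius $\overline R$. It then propagates $R_{i+1}=e^{L_ih_i}(R_i+\rho_ih_i)$ by Gr\"onwall.

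This crude bound amplifies errors by roughly $e^{\sum L_ih_i}$. It still suffices here because the system is two-dimensional, the precision is 512 bits, and the endpoint runs start with errors below $10^{-45}$. It therefore yields the stated enclosures of width about $3\times 10^{-34}$. Your Taylor-model integrator would also work and give tighter bounds, at the cost of substantially more implementation.
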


The proof of \cref{prop:validated-shooting} is computer-assisted, using interval arithmetic from the Python libraries \texttt{FLINT} and \texttt{Arb}. 
We first describe the logic of the argument and prove the proposition assuming that certain parameters, which we fix a priori, and certain quantities, which depend on them, satisfy a list of constraints. The precise values of the prescribed parameters are provided in a table in Subsection \ref{sss:param} below. Enclosures for the remaining quantities are computed by the validating program, which also checks that all constraints are satisfied. {We exhibit below upper bounds for some of the most important quantities for illustration.}

Recalling that we are working with the system \eqref{eq:BB-system}, we make the following separate claims: the ODE is well-posed in $t \in [0, 1]$ for $A \in [A_-, A_+]$, and for $A = A_\pm$ its solution at $t = 1$ has sign $\mp$. In order to verify these claims, it is useful to treat them simultaneously and get the best possible enclosures for $(f_A, y_A)$ uniformly in $A \in [A_c - A_r, A_c + A_r]$ in the three cases 
\begin{equation}\label{eq:initial-data-cases}
    (A_c, A_r) = \begin{cases}
        (A_-, 0) \\
        (A_+, 0) \\
        \left( \tfrac{A_- + A_+}{2}, \tfrac{A_+ - A_-}{2} \right)\,,
    \end{cases}
\end{equation} 
that is, treating the parameter $A$ as an interval. Showing good enough enclosures for $(f_A, y_A)$ in successive time subintervals will, in turn, show that the solution continues to exist for further times, and ultimately conclude that for all $A \in [A_-, A_+]$ it extends to $t = 1$, together with the desired sign condition.

Due to the singular nature of the ODE, as in the analytic proof from above, we split the interval $[0, 1]$ into $[0, h_0]$ and $[h_0, 1]$ but this time for a chosen macroscopic parameter $h_0$. In the first interval, we propose a polynomial approximate solution to the ODE constructed by solving recursively for the coefficients \eqref{eq:analytic-expansions-y-f}, and show by a fixed-point argument that the actual solution lies in a controlled $C^0$-neighborhood of it. On $[h_0, 1]$, we similarly construct polynomial ans\"atze in small subdivisions of the time interval, and propagate the deviation from the approximate solution by a Gr\"{o}nwall argument.

\smallskip
\subsubsection{Validation of the singular ODE on $[0, h_0]$.}\label{ss:ODE-shooting}
We begin by computing with exact rational arithmetic {(i.e. with error zero)} the polynomials $p_f(t), p_y(t) \in \mathbb Q[t]$ which are the truncations (resp.~to a fixed degree $d_0+1$ and $d_0$) of the analytic solution constructed in \cref{lem:endpoint-family} with initial data $(A_c, B_c)$, where $B_c$ {is a very precise rational approximation of} $B(A_c)$. Thus they satisfy
\[
  p_f(0) = A_c,
  \qquad p_y(0) = B_c
  \qquad \dot p_f=-p_y\,,
\]
and their coefficients are calculated recursively as explained in \cref{r:recursive-analytic-coeffs}.
The precise details of the construction of this approximate solution has no bearing on the proof; the only property of $(p_f, p_y)$ that we use is its initial condition and the fact it solves \eqref{eq:BB-system} approximately, in the sense that the error
\begin{equation}\label{e:error-polynom}
  \mathcal R_0(t):=t \Dot{p}_y(t)-\mathcal G(t,p_f(t),p_y(t))
\end{equation}
has a sufficiently small $C^0$ norm. Practically, the bound $\rho_0  := \| \mathcal{R}_0 \|_{C^0([0, h_0])}$ can be computed from an enclosure of $\mathcal{R}_0([0, h_0])$. In our case, $\mathcal{R}_0$ can be written as the quotient of two polynomials: the denominator is comfortably far from zero, and the numerator has all coefficients of degree $\leq d_0$ vanishing (up to a numerical error) by construction. Thanks to this we get the tiny bound \eqref{eq:CAP-rho0-Mf-My} for $\rho_0 $.

We proceed to set up a fixed point method to compare the exact local solution of \eqref{eq:BB-system} to these polynomial approximations. 
For $|A-A_c|\le A_r$, seek a solution $(f,y)$ of the form
\begin{equation}\label{e:exact-sol}
  f=p_f+u\,,
  \qquad y=p_y+v\,,
  \qquad u(t)=A - A_c -\int_0^t v(\tau)\,\dd \tau\,.
\end{equation}
Then, thanks to \eqref{e:error-polynom}, the equation for $\dot{y}$ is equivalent to $v$ being a fixed point of the mapping $F : \mathbf{B}_{\overline{V}} \subset C^0([0, h_0]) \to C^0([0, h_0])$ given by
\[
  Fv(t) := \frac1t\int_0^t
  \left[
  \mathcal G\bigl(\tau,(p_f+u)(\tau),(p_y+v)(\tau)\bigr)-\mathcal G\bigl(\tau,p_f(\tau),p_y(\tau)\bigr){+v}(\tau)-\mathcal R_0(\tau)
  \right]\dd \tau\,,
\]
with $u$ determined by \eqref{e:exact-sol}; note that $Fv$ indeed extends continuously to $t = 0$ as
\begin{equation}
    \label{eq:Fv-0}
    Fv(0) = \mathcal{G}(0, A, p_y(0) + v(0)) + v(0).
\end{equation}

We have restricted $F$ to a ball of fixed radius $\overline{V}$ in order to guarantee that $\mathcal{G}$ is well-behaved on its input. More precisely, first notice that $\|v\|_{C^0} \leq \overline{V}$ implies $\|u\|_{C^0} \leq \overline{U} := A_r + h_0 \overline{V}$. We define
\[
  M_f := \sup \left\{ |\partial_f \mathcal{G}(t, p_f(t) + u, p_y(t) + v)| : (t, u, v) \in [0, h_0] \times [-\overline{U}, \overline{U}] \times [-\overline{V}, \overline{V}] \right\}
\]
and
\[
  M_y := \sup \left\{ |\partial_y \mathcal{G}(t, p_f(t) + u, p_y(t) + v) + 1| : (t, u, v) \in [0, h_0] \times [-\overline{U}, \overline{U}] \times [-\overline{V}, \overline{V}] \right\}
\]
and compute enclosures for both (recorded below in \eqref{eq:CAP-rho0-Mf-My}) by splitting this box into a prescribed number $N_0$ equal-length pieces along the $t$-direction and evaluating on these boxes. In particular, $\mathcal G$ is Lipschitz on this box, and thus $F$ is well-defined. In addition, a direct computation shows that $F$ is Lipschitz in $\mathbf{B}_{\overline{V}}$ with constant $\Lip(F) \leq q := h_0 M_f + M_y$, and the value of $q < 1$ is also stated below in \eqref{eq:CAP-q}.

Next we seek a fixed point in $\mathbf{B}_V \subset C^0([0, h_0])$ for $V \leq \overline{V}$ as small as possible. For $v \equiv 0$ we have $u \equiv A - A_c$ and thus $\|F(0)\|_{C^0([0,h_0])} \leq M_f A_r + \rho_0 $, hence
\[
  \| F (v) \|_{C^0([0, h_0])}
  \leq \| F (0) \|_{C^0([0, h_0])} + \Lip(F) \| v \|_{C^0([0, h_0])}
  \leq M_f A_r + \rho_0  + q V
  = V
\]
provided that we set $V := \tfrac{M_f A_r + \rho_0 }{1 - q}$
and this number is smaller than $\overline{V}$; this is clear from the bounds obtained in \eqref{eq:CAP-V} below. The fixed point theorem now yields $v \in C^0([0, h_0])$ with norm at most $V$ such that for the corresponding $u$, the pair $(f, y) = (p_f + u, p_y + v)$ solves \eqref{eq:BB-system} with ${f|_{t=0}} = A$ and, by \eqref{eq:Fv-0}, $\mathcal{G}(0, A, {y|_{t=0}}) = 0$. {Moreover,} the only solution ${y|_{t=0}}$ to this equation in the $V$-neighborhood of $B_c$ is
${y|_{t=0}} = B(A)$.

By uniqueness of the fixed point (possibly in a shorter interval), this solution agrees with the analytic solution constructed in \cref{lem:endpoint-family}. Thus, the conclusion of this stage is that for all $A$ such that $|A - A_c| \leq A_r$, the solution from \cref{lem:endpoint-family} exists until $t = h_0$ and satisfies $|f(h_0) - f_1| \leq R_1$ and $|y(h_0) - y_1| \leq R_1$, where $(f_1, y_1) = (p_f(h_0), p_y(h_0))$ and $R_1 = \max \{ A_r + h_0 V, V \}$.

Here we record the bounds for $\rho_0 , M_f$ and $M_y$ produced by the validator, and the corresponding bound for $q$, for the run on $[A_-, A_+]$, since this gives the crudest bounds among the three cases \eqref{eq:initial-data-cases}.
\begin{equation}
\label{eq:CAP-rho0-Mf-My}
  \rho_0 \le 5.61 \times 10^{-46}, \qquad
  M_f \le 4.3749, \qquad
  M_y \le 0.1477, \qquad
\end{equation}
\begin{align}
  \label{eq:CAP-q} \Lip(F) \le q &\le 0.1915 <1.
\end{align}
The bounds for $V$ are as follows:
\begin{equation}
  \label{eq:CAP-V}
  V \leq 6.93 \times 10^{-46} \text{ for the endpoint runs,}
  \qquad
  V \leq 5.42 \times 10^{-26} \text{ for the interval run.}
\end{equation}

\smallskip
\subsubsection{Validation of the regular ODE on $[h_0, 1]$}
Next we explain how one carries out simultaneously the verification that $z(t) = (f(t), y(t))$-the solution with initial condition $A \in [A_c - A_r, A_c + A_r]$-extends to the $t$-interval $[h_0, 1]$, and the propagation of the enclosures for $z$ up to $t = 1$. We divide $[h_0, 1]$ into subintervals $[t_i, t_{i+1}]$ of variable length
\[
  h_i = t_{i+1} - t_i := \min\{t_i/4, \overline{h}, 1 - t_i\},
  \qquad t_1 = h_0\,,
\]
for a prescribed $\overline{h}$. In each subinterval, we seek a solution $z$ of the ODE system
\begin{equation}\label{eq:regular-vector-field}
  \dot z(t) =\mathcal V(t,z(t)),
  \qquad
  \mathcal V(t,f,y):=
  \left(-y,\frac{\mathcal G(t,f,y)}t\right),
  \qquad t \in [t_i, t_{i+1}],
\end{equation}
with initial conditions $z(t_i) = (f(t_i), y(t_i))$ satisfying the validated bounds $|f(t_i) - f_i| \leq R_i$, $|y(t_i) - y_i| \leq R_i$. As in the initial step, we find polynomials $p = (p_f, p_y)$ corresponding to the truncation to degrees $(d_1 + 1, d_1)$ of the power series solution of \eqref{eq:regular-vector-field} around $t_i$ with initial conditions $p(t_i) = (f_i, y_i)$. Analogously to the initial step, these solve \eqref{eq:regular-vector-field} up to an error
\[
  \mathcal{R}_i(t) := \dot p_y(t) - \frac{\mathcal{G}(t, p_f(t), p_y(t))}{t},
  \qquad
  \rho_i := \| \mathcal{R}_i \|_{C^0([t_i, t_{i+1}])}\,,
\]
which we estimate by lower-bounding the denominator that appears, and upper-bounding the numerator, which is a polynomial with minuscule terms of degree at most $d_1$.

Fix a coarse radius $\overline{R}$ and suppose that $\overline{R} > R_i$. As in the initial step, we can compute an enclosure for
\[
  L_i := \max \left\{ |D_z \mathcal{V}(t, p_f(t) + u, p_y(t) + v)| : (t, u, v) \in [t_i, t_{i+1}] \times [-\overline{R}, \overline{R}]^2 \right\},
\]
(where the matrix norm is $\ell^\infty \to \ell^\infty$) by further splitting this box in the $t$-coordinate into $N_1$ equal-length pieces and using interval arithmetic.

Let $w(t) := |z(t) - p(t)|$ using the $\ell^\infty$ norm on $\R^2$, and estimate
\[
  \frac{\dd}{\dd t} w(t)
  \leq |\dot z(t) - \dot p(t)|
  \leq |\mathcal{V}(t, z(t)) - \mathcal{V}(t, p(t))| + \rho_i
  \leq L_i w(t) + \rho_i
\]
as long as $w(t) \leq \overline{R}$. For such $t$, Gr\"{o}nwall's inequality gives
\[
  \frac{\dd}{\dd t} (e^{-L_i(t-t_i)} w) = e^{-L_i(t-t_i)} (\dot w - L_i w) \leq e^{-L_i(t-t_i)} \rho_i \leq \rho_i,
\]
and a first exit time argument shows that $w(t) \leq e^{L_i h_i} (R_i + \rho_i h_i) =: R_{i+1}$ for $t \in [t_i, t_{i+1}]$ provided that $R_{i+1} \leq \overline{R}$, which we also verify. This is the starting point for the next subinterval, with $(f_{i+1}, y_{i+1}) = p(t_{i+1})$.

Since hundreds of parameters are used in the justification of this step, we will not print them here, and just mention that the validator checks that the solution exists for all $t \leq 1$ for every $A \in [A_-, A_+]$, and that the propagated bounds obtained by this analysis yield \eqref{eq:positive-shot} and \eqref{eq:negative-shot}. See the Git repository \cite{Git-repo} for the complete collection of source and output files.

\smallskip
\subsubsection{Technical parameters}\label{sss:param}  
The table below lists the fixed technical parameters used in the validation and mentioned in the text above.

\begin{center}
\begin{tabular}{@{}cll@{}}
\toprule
$h_0$ & end of initial interval & $0.01$\\
$d_0$ & degree of $p_y$ on the initial interval & $40$ \\
$\overline{V}$ & coarse bound on the initial interval & $10^{-12}$ \\
$N_0$ & subdivisions on initial interval & $64$\\
$\overline{h}$ & maximum subsequence step size & $0.01$ \\
$d_1$ & degree of $p_y$ on subsequent steps & $30$\\
$\overline{R}$ & coarse bound on subsequent steps & $10^{-10}$ \\
$N_1$ & subdivisions on each subsequent step & $16$\\
& number of subsequent steps & $103$ per run, $309$ total\\
& working precision & $512$ bits \\
\bottomrule
\end{tabular}
\end{center}
{We refer the reader to the files stored in the Git repository \cite{Git-repo} for more details.}

\subsection{Shooting and odd extension}
The next step towards completing the proof of \cref{thm:counterexample-main} involves performing an odd reflection across the endpoint $s=0$ in order to obtain a solution on the entire $s$-interval $[-1,1]$. Let us begin by deducing from the previous facts the existence of a real-analytic solution shooting to zero. We henceforth return to our original variable $s$, since we no longer require the reparameterization via $t$.

\begin{lemma}
\label{lem:shooting}
There exists a real number $\widehat A\in(A_-,A_+)$ such that the associated real analytic solution $f_{\widehat{A}}$ satisfies $f_{\widehat A}(0)=0$.
    
\begin{proof}
By \cref{lem:endpoint-family}, given the interval $I = [A_-, A_+]$, there exists $\eps > 0$ such that $A \mapsto (f_A(1 - \eps), f_A'(1 - \eps))$ is continuous in $I$. Then by \cref{prop:validated-shooting}, the solutions to each of the regular initial value problems
\[
    \begin{cases}
        \displaystyle \bar f_A''(s) = \frac{9 s \bar f_A' - N(s, \bar f_A, \bar f_A') / D(s, \bar f_A, \bar f_A')}{9(1-s^2)} \\
        \bar f_A(1-\eps) = f_A(1-\eps), \\
        \bar f_A'(1-\eps) = f_A'(1-\eps)
    \end{cases}
\]
admit unique solutions on $[0, 1-\eps]$, and thus their endpoint values depend continuously on their initial data, which in turn is a continuous function of $A$. By uniqueness of solutions, this is the unique solution extending $f_A$ on $[0, 1]$, and in particular $A \mapsto \Phi(A) := f_A(0)$ is continuous. The conclusion now follows by \eqref{eq:positive-shot} and \eqref{eq:negative-shot}.
\end{proof}
\end{lemma}

We now use the solution $f = f_{\hat A}$ given by \cref{lem:shooting} to obtain a solution on $[-1,1]$. Observe that the system $(s,f,f')$ is invariant under the reflection map
\[
  (s,f,f')\longmapsto(-s,-f,f')\,.
\]
Indeed, $K$ and $T$ are conjugated, $D$ is unchanged, and $N$ changes sign. Thus, we may define $\widetilde f(s)=-f(-s)$ for $s\in [-1,0]$, which satisfies $\widetilde f(0)=f(0)=0$ and $\widetilde f'(0)=f'(0)$.  Since the ODE is nonsingular at $s=0$ and $D>0$ throughout, uniqueness gives the real-analytic odd extension
\begin{equation*}\label{eq:f-odd}
  f(s) = \begin{cases}
      f(s) & 0 \leq s \leq 1 \\
      \tilde f(s) & -1 \leq s < 0\,.
  \end{cases}
\end{equation*}

\subsection{Conclusion of the proof}
Now that we have obtained a solution of \eqref{eq:shooting-ODE} on $[-1,1]$ which in turn yields a real-analytic solution $u$ of \eqref{eq:special-lagrangian} away from the origin, we are in a position to conclude the proof of \cref{thm:counterexample-main}.

\begin{proof}[Proof of \cref{thm:counterexample-main}]
First notice that $\det(\Id + i D^2 u)$ never vanishes, because the matrix $\Id + i D^2 u$ is always invertible, as $(\Id - i D^2 u)(\Id + i D^2 u) = \Id + (D^2 u)^2$ is positive-definite. By \eqref{eq:special-lagrangian}, the argument of $\det(\Id + i D^2 u)$ takes values in $\pi \Zbb$, hence it is constant by continuity. Thus our argument condition \eqref{eq:argument-at-start} implies that it is zero, namely
\begin{equation}\label{eq:SL-off-origin}
  F_5(D^2u)
  =\sum_{j=1}^5 \arctan \lambda_j (D^2 u)
  = 0
  \qquad\text{on }\R^5\setminus\{0\}.
\end{equation}

Since the Hessian of $u$ is $0$-homogeneous, it is in particular bounded, hence $u \in C^{1,1}$, but it is not even twice differentiable at the origin: if it were so, being $2$-homogeneous it would agree with its quadratic Taylor polynomial, but $u$ being odd (because $f$ and $P$ are so) and nonzero (because $f(1) = \widehat A > 0$) it cannot be a quadratic form.

Equation \eqref{eq:SL-off-origin} holds almost everywhere.  The operator $F_5$ is continuous and nondecreasing (cf. \cref{ss:viscosity-setup}), so
\cref{lem:ae-viscosity} implies that $u$ is a global viscosity solution.
\end{proof}

\appendix

\section{\texorpdfstring{Explicit formulae for $D$ and $N$}{Explicit formulae for D and N}} \label{app:polynomials}
The functions in \eqref{eq:DN-def} have the following explicit polynomial expressions.
\begin{align*}
D={}&16f^4-72f^3sy+108f^2s^2y^2-108f^2y^2-24f^2\\
&-54fs^3y^3+54fsy^3+54fsy-27s^2y^2+27y^2+1,
\end{align*}
and
\begin{align*}
N={}&32f^5-144f^4sy+288f^3s^2y^2-288f^3y^2-80f^3\\
&-432f^2s^3y^3+432f^2sy^3+216f^2sy\\
&+486fs^4y^4-972fs^2y^4-216fs^2y^2+486fy^4+216fy^2+10f\\
&-243s^5y^5+486s^3y^5+108s^3y^3-243sy^5-108sy^3-9sy.
\end{align*}
These identities have been obtained by expanding \eqref{eq:DN-def}, with the aid of SymPy.

\printbibliography

\end{document}